\documentclass[11pt]{article}

\usepackage[T1]{fontenc}
\usepackage[utf8]{inputenc}
\usepackage{lmodern}
\usepackage[margin=1.05in]{geometry}
\usepackage{microtype}
\usepackage{amsmath,amssymb,amsthm,mathtools}
\usepackage{enumitem}
\usepackage[dvipsnames]{xcolor}
\usepackage{aliascnt}
\usepackage{hyperref}
\usepackage[nameinlink,noabbrev]{cleveref}
\usepackage{fancyhdr}

\hypersetup{
  colorlinks=true,
  linkcolor=MidnightBlue,
  citecolor=ForestGreen,
  urlcolor=BrickRed,
  pdftitle={Recognizability equals CMSO-definability for graphs of rank-width at most two},
  pdfauthor={Antonios Kalampakas},
  pdfsubject={VR-recognizability and CMSO-definability on graphs of rank-width at most two},
  pdfkeywords={rank-width, VR-recognizability, CMSO, split decomposition, connectivity functions}
}

\newtheorem{theorem}{Theorem}[section]
\newaliascnt{lemma}{theorem}
\newtheorem{lemma}[lemma]{Lemma}
\aliascntresetthe{lemma}
\newaliascnt{proposition}{theorem}
\newtheorem{proposition}[proposition]{Proposition}
\aliascntresetthe{proposition}
\newaliascnt{corollary}{theorem}
\newtheorem{corollary}[corollary]{Corollary}
\aliascntresetthe{corollary}
\newaliascnt{claim}{theorem}

\aliascntresetthe{claim}
\theoremstyle{definition}
\newaliascnt{definition}{theorem}
\newtheorem{definition}[definition]{Definition}
\aliascntresetthe{definition}
\newaliascnt{construction}{theorem}
\newtheorem{construction}[construction]{Construction}
\aliascntresetthe{construction}
\theoremstyle{remark}
\newaliascnt{remark}{theorem}
\newtheorem{remark}[remark]{Remark}
\aliascntresetthe{remark}

\newcommand{\F}{\mathbb F}
\newcommand{\CMSO}{\mathrm{CMSO}}
\newcommand{\MSO}{\mathrm{MSO}}
\newcommand{\fcl}{\operatorname{fcl}}
\newcommand{\rw}{\operatorname{rw}}
\newcommand{\lrw}{\operatorname{lrw}}
\newcommand{\lcw}{\operatorname{lcw}}
\newcommand{\pw}{\operatorname{pw}}

\title{\bfseries Recognizability equals CMSO-definability\\
for graphs of rank-width at most two}
\author{Antonios Kalampakas\\[-0.1em]
\small Department of Mathematics, American University of the Middle East\\[-0.1em]
\small Egaila 54200, Kuwait}
\date{12 July 2026}

\begin{document}
\maketitle

\begin{abstract}
We prove that, on finite graphs of rank-width at most two,
VR-recognizability and counting monadic second-order definability coincide.
This advances the recognizability-versus-definability problem from bounded
linear clique-width to the first nontrivial bounded rank-width level beyond
the rank-width-one split-decomposition case.  The proof first treats
split-prime graphs.  The maximal partial-tree theory of Clark and Whittle
organizes the non-sequential cut-rank-two separations, while a single strong
separation orients all strong equivalence classes and yields a
CMSO-definable laminar family of canonical cores.  Although the auxiliary
partial tree is not itself transduced, it proves that every canonical local
piece has a port-contiguous layout of uniformly bounded linear rank-width.
The width argument uses partition atoms and the branch-width-three display
theorem of Hall, Oxley, Semple, and Whittle and does not assume that graph
torsos remain prime.  Coherent ordered rank-two frames then permit a
finite-state bottom-up evaluation whose local transitions are definable by
the bounded-linear-clique-width theorem of Boja\'nczyk, Grohe, and
Pilipczuk.  Finally, the CMSO-transducible canonical split decomposition
lifts the result from prime graphs to arbitrary graphs of rank-width at most
two.
\end{abstract}

\noindent\textbf{Keywords:} rank-width; clique-width; VR-recognizability;
counting monadic second-order logic; split decomposition; connectivity
functions; graph transductions.

\medskip
\noindent\textbf{2020 Mathematics Subject Classification:}
05C83; 03B70; 68Q45.

\section{Introduction and main result}

Courcelle's programme relates logical descriptions of graph properties to
finite-state evaluation on algebraic graph decompositions.  For sparse
graphs, the resulting equivalence between recognizability and counting
monadic second-order definability on every bounded-tree-width class was
proved by Boja\'nczyk and Pilipczuk~\cite{BP16}.  The corresponding dense
setting is governed by clique-width and rank-width.  Clique-width was
developed by Courcelle and Olariu~\cite{CourcelleOlariu00}, while rank-width,
introduced by Oum and Seymour~\cite{Oum05,OumSeymour06}, measures the
\(\F_2\)-rank of the cuts in a branch decomposition and is equivalent to
clique-width up to an exponential change of parameter; see also the survey
of Oum~\cite{Oum17}.

For vertex-replacement, or VR, recognizability, the forward logical
implication is classical: every \(\CMSO_1\)-definable graph property is
VR-recognizable~\cite{Courcelle90,CE12}.  The converse fails without a width
restriction, and the central question is therefore whether it becomes true
on bounded-clique-width, equivalently bounded-rank-width, classes.  A major
positive result of Boja\'nczyk, Grohe, and Pilipczuk proves the equivalence
on every class of bounded linear clique-width~\cite{BGP21}.  That theorem
does not settle bounded rank-width: even rank-width-one graphs have
unbounded linear rank-width, as is already witnessed by trees
\cite{AdlerKante15}.  Recent CMSO transductions for canonical split
decompositions, combined with the standard finite-state evaluation of their
bounded-width outputs, nevertheless provide the rank-width-one case
\cite{CampbellEtAl26}.  The present paper establishes the next level,
rank-width at most two.

We fix the conventions used throughout.  All graphs are finite, simple, and
undirected, and \(\CMSO\) means \(\CMSO_1\): monadic second-order logic over
the vertex-adjacency structure, extended by all fixed modular-cardinality
predicates.  We use the usual many-sorted VR algebra, whose sorts are
indexed by finite vertex-label alphabets and whose operations are disjoint
union, relabelling, and complete joining of label classes.  Labels form a
partition of the vertex set, and an unlabelled graph is represented in the
one-label sort.  A congruence is \emph{locally finite} if its restriction to
each sort has finite index.  A graph property \(\mathcal P\) is
\emph{globally VR-recognizable} if it is recognized by a locally finite
quotient of this algebra, equivalently if a locally finite congruence
saturates \(\mathcal P\) in the one-label sort
\cite{CE12,CourcelleWeil05}.  Relabelling every vertex to that label shows
that the same congruence saturates the inverse image of \(\mathcal P\)
under forgetting labels in every sort.  A one-hole polynomial context may
use arbitrary fixed labelled graphs as operands, and congruence
compatibility permits replacement within such a context.  The fixed
operands need not admit terms over the working label alphabet alone.
We use nondeterministic CMSO transductions in their standard finite-copy
form, with closure under composition and backward translation
\cite{CE12,BGP21}.

\begin{theorem}[Main theorem]\label{thm:main}
Let
\[
  \mathcal C_2=\{G:\rw(G)\leq 2\}.
\]
For every language \(\mathcal L\subseteq\mathcal C_2\), the following are
equivalent:
\begin{enumerate}[label=\textup{(\roman*)}]
  \item there is a globally VR-recognizable graph property \(\mathcal P\)
        such that \(\mathcal L=\mathcal P\cap\mathcal C_2\);
  \item there is a \(\CMSO\) sentence \(\varphi\) such that
        \[
          \mathcal L=\{G\in\mathcal C_2:G\models\varphi\}.
        \]
\end{enumerate}
\end{theorem}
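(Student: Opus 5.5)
The direction (ii)\(\Rightarrow\)(i) is the classical one. Every \(\CMSO\)-definable property is globally VR-recognizable~\cite{Courcelle90,CE12}. So, given \(\varphi\), we take \(\mathcal P=\{G:G\models\varphi\}\), and then \(\mathcal L=\mathcal P\cap\mathcal C_2\). All the work is in (i)\(\Rightarrow\)(ii).

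The plan for that direction is a definable-decomposition argument in the style of Boja\'nczyk--Pilipczuk. We will construct a nondeterministic \(\CMSO\) transduction \(\tau\) with two properties: on every \(G\in\mathcal C_2\) it produces at least one output, and every output encodes a VR-term (or a tree-shaped decomposition) over a bounded label alphabet whose value is \(G\). Given a locally finite congruence saturating \(\mathcal P\), a finite-state bottom-up run over such an encoded term decides \(\mathcal P\). That run is \(\MSO\)-definable on the encoded tree, and backward translation through \(\tau\) then yields \(\varphi\). We must also check that \(\mathcal C_2\) itself is \(\CMSO\)-definable; this holds because it is closed under vertex-minors and has finitely many obstructions (Oum). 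Hence we can conjoin the membership test and make the choice of output irrelevant.

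The construction proceeds in three stages.
\begin{enumerate}[label=\textup{(\alph*)}]
  \item \emph{Reduction to split-prime graphs.} We use the \(\CMSO\)-transducible canonical split decomposition~\cite{CampbellEtAl26}. Its bags are split-prime graphs of rank-width at most two, or degenerate cliques and stars, and the decomposition tree glues them by marked edges. A recognizable property can be evaluated over this tree, provided each prime bag's contribution, viewed as a VR-context with boundedly many marker labels, can be computed by a \(\CMSO\)-definable finite-state transition. This reduces the problem to prime graphs carrying a bounded number of distinguished ports.
  \item \emph{Canonical cores in a prime graph.} For a prime \(G\), we consider the cut-rank-two separations. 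Following Clark--Whittle, the non-sequential ones are organized in a maximal partial tree. We orient all strong equivalence classes using one guessed strong separation, which is what makes the construction nondeterministic. The result is a laminar family of canonical cores, and we show this family is \(\CMSO\)-definable from \(G\) together with the guessed parameter.
  \item \emph{Local evaluation.} We show that each local piece, with its ports contiguous, has linear rank-width bounded by a constant. The tool is the branch-width-three display theorem of Hall--Oxley--Semple--Whittle, applied via partition atoms. The \(\CMSO\) transduction producing linear decompositions of bounded linear clique-width from~\cite{BGP21} then makes each local transition of the bottom-up automaton definable. Coherent ordered rank-two frames ensure the interfaces between parent and child pieces carry only boundedly many labels, so the global evaluation is finite-state.
\end{enumerate}

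I expect the main obstacle to be stage (c): bounding the linear rank-width of the canonical local pieces uniformly. Torsos need not stay prime, and the sequential (path-like) parts of the partial tree can be arbitrarily long, so that linearity has to be exploited rather than further decomposed. I would first try to show that a piece is displayed by a caterpillar-like branch decomposition of width three, with the ports at one end. From that layout, a bounded linear rank-width ordering can be read off directly.
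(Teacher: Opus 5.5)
Your outline follows the paper's architecture: split lift, anchor-oriented canonical cores, Hall--Oxley--Semple--Whittle via atoms, Boja\'nczyk--Grohe--Pilipczuk for the local steps, and coherent frames. However, the two steps that actually carry the proof are only named, and your global framing is not something stages (b)--(c) can deliver. You promise a transduction whose outputs encode a bounded-width VR-term with value \(G\). The local layouts are only shown to exist. Running the Boja\'nczyk--Grohe--Pilipczuk transduction on a local piece gives some clique-width decomposition of it, with no control over where a child's three port vertices sit, so child terms cannot be grafted in. The paper never transduces a decomposition of a prime graph.

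The missing idea is how a local transition, which must read unboundedly many child states, becomes a question about a bounded-width graph. The real territory has unbounded width, and the bare torso carries no state information. The paper proceeds as follows:
\begin{enumerate}[label=\textup{(\arabic*)}]
  \item For each port-congruence state \(q\) it fixes a bounded representative \(J_q\) containing both labels \(e_1,e_2\).
  \item It interprets the expansion \(L_u\), with a copy of \(J_{q_v}\) in place of each child and adjacencies \eqref{eq:gadget-core}--\eqref{eq:label-transport} read off the frames.
  \item It shows that \(L_u\) has the state of the real territory, by a congruence argument in which the outside is a fixed operand of arbitrary clique-width (\cref{lem:port-substitution}).
  \item It proves \(\lrw(L_u)\le 6+M\) from a port-contiguous torso layout (\cref{lem:local-expansion}).
  \item Only then does it apply Boja\'nczyk--Grohe--Pilipczuk semantically to get \(\operatorname{Transition}_q(u)\), and it guesses a unary state colouring of the laminar tree.
\end{enumerate}
Coherent frames are also not what bounds the labels; rank two already gives four. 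They ensure that the basis used to port-label a child is literally the one used in the parent's adjacency formulas, since a nontrivial element of \(\operatorname{GL}(2,2)\) changes the substituted graph.

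The width bound is the heart of the proof, and you leave it as an intention; the route you sketch does not fit. A canonical node can have unboundedly many child interfaces. Placing them together creates prefixes whose cut-rank is \(\rho_G\) of a union of several cores, which the rank-two property of individual cores does not control. The argument only needs, and the paper only proves, that each terminal triple is consecutive, with the triples interleaved among interior vertices. Hall--Oxley--Semple--Whittle does not give a caterpillar either: it displays one 3-separating set at a time in some width-three branch decomposition, so linearity must be manufactured separately. The paper's ingredients are:
\begin{enumerate}[label=\textup{(\arabic*)}]
  \item the corrected torso with terminal--terminal adjacencies \eqref{eq:terminal-terminal}, and the block identity \eqref{eq:block-identity};
  \item the class-normal form of the maximal partial 2-tree (\cref{prop:strong-normal});
  \item for flower components, concatenation of width-two petal orderings (\cref{lem:flower-torso});
  \item for flower-free components, one-at-a-time compression of boundary blocks into atoms using Hall--Oxley--Semple--Whittle applied to \(\mu+1\) (\cref{lem:atom-compression});
  \item a proof that every displayed atom cut has a full-closure side, using item (iv) of the normal form and \cref{lem:one-block} (\cref{lem:atom-orientable});
  \item the sink lemma, which gives width four on atoms and six after expanding atoms to triples (\cref{prop:bag-layout}).
\end{enumerate}

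There are also smaller omissions. Disconnected graphs need the finite commutative monoid of component states together with modular counting. The split lift needs the decorated factor languages to be shown globally VR-recognizable before the labelled prime case applies. Your \(\mathcal C_2\)-membership test is harmless but unnecessary, since (ii) only constrains graphs in \(\mathcal C_2\).
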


Thus recognizability on \(\mathcal C_2\) means restriction to
\(\mathcal C_2\) of a globally VR-recognizable property; this avoids imposing
any condition on how a relative language is extended outside the class.  The
implication \(\textup{(ii)}\Rightarrow\textup{(i)}\) is Courcelle's
recognizability theorem.  We prove the converse.

The proof separates a structural witness from the logical
object that is ultimately transduced.  On a split-prime input, the
Clark--Whittle flower and maximal-partial-tree theorems first organize the
non-sequential exact cut-rank-two separations.  Their strong equivalence
classes have canonical cores, and one anchor orientation turns these cores
into a CMSO-definable laminar family.  The Clark--Whittle tree is used only
to prove a width theorem for the local pieces of this canonical family; no
formula has to select that auxiliary tree, its flowers, or any particular
linear layout.  The local pieces are replaced by rank-two port torsos, and a
branch-width-three display theorem gives port-contiguous layouts of linear
rank-width at most six.  Two unary selectors choose coherent boundary bases.
For a fixed recognizable property, its finite port-congruence states can
then be verified locally by CMSO because all interpreted local expansions
have bounded linear clique-width.  A finite-state computation on the
laminar tree proves the prime case.  Finally, the canonical split
decomposition of Cunningham~\cite{Cunningham82}, in the CMSO-transducible
form of Campbell et al.~\cite{CampbellEtAl26}, lifts the argument to all
connected graphs, and a finite commutative-monoid calculation handles
connected components.

\section{Connectivity preliminaries}

For a graph \(G\) and \(X\subseteq V(G)\), let
\[
 \rho_G(X)=\operatorname{rank}_{\F_2} A_G[X,V(G)\setminus X].
\]
This function is symmetric, submodular, vanishes at the empty set, and has
\(\rho_G(\{v\})\leq 1\).  We use the same notation \(\lambda\) for an
arbitrary symmetric submodular integer-valued function satisfying these
three conditions.  Branch-width and path-width of \(\lambda\) are defined
in the usual way.  With this convention, graph rank-width is the
branch-width of \(\rho_G\), and graph linear rank-width is its path-width.

Two shifts used in the literature will be kept explicit.  Clark and Whittle
call \(X\) \(k\)-separating when their connectivity function has value at
most \(k\); we apply their theory directly with \(k=2\) and
\(\lambda=\rho_G\)~\cite{ClarkWhittle}.  Hall, Oxley, Semple, and Whittle
normalize a 3-connected connectivity function to have value one at the
empty set.  Accordingly, only in the application of their display theorem
in \cref{lem:atom-compression} do we pass from an atom cut-rank function
\(\mu\) to the shifted function \(\mu^+=\mu+1\).  No other argument changes
the cut-rank convention.

A connected graph with at least five vertices is \emph{split-prime} when it
has no partition \(X\mid V(G)\setminus X\), with both sides of size at least
two, whose cut-rank is at most one.  Thus a split-prime graph satisfies
\(\rho_G(X)\geq2\) whenever both sides have size at least two.  The bounded
graphs excluded by this size convention will always be handled by a finite
disjunction.

\begin{definition}[Absorption closure]
If \(e\notin X\), then \(e\) is \emph{absorbable into} \(X\) when
\(\lambda(X\cup\{e\})\leq\lambda(X)\).  The full absorption closure
\(\fcl(X)\) is the set reached by repeatedly adding absorbable elements.
An exact \(2\)-separation \((X,E\setminus X)\) is \emph{sequential} when
\(\fcl(X)=E\) or \(\fcl(E\setminus X)=E\), and is \emph{non-sequential}
otherwise.  A side of a non-sequential exact \(2\)-separation is abbreviated
an \emph{ns-side}.
\end{definition}

\begin{lemma}[Persistence and uniqueness]\label{lem:persistence}
If \(e\) is absorbable into \(X\), then it remains absorbable into every
\(X\cup W\) not already containing \(e\).  Consequently \(\fcl(X)\) is
well-defined, is the least absorption-closed superset of \(X\), and is
monotone in \(X\).
\end{lemma}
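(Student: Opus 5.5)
The plan is to derive the persistence claim directly from submodularity of \(\lambda\), and then obtain the remaining assertions by a standard closure-operator argument.

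\emph{Persistence.} Suppose \(e\notin X\) satisfies \(\lambda(X\cup\{e\})\leq\lambda(X)\), and let \(W\) be such that \(e\notin X\cup W\). Put \(A=X\cup\{e\}\) and \(B=X\cup W\). Then \(A\cup B=X\cup W\cup\{e\}\), and \(A\cap B=X\), because \(e\notin B\). Submodularity gives
\[
 \lambda(X\cup W\cup\{e\})+\lambda(X)\leq\lambda(X\cup\{e\})+\lambda(X\cup W).
\]
Subtracting \(\lambda(X)\) and using \(\lambda(X\cup\{e\})\leq\lambda(X)\) yields \(\lambda(X\cup W\cup\{e\})\leq\lambda(X\cup W)\). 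So \(e\) is absorbable into \(X\cup W\). This step uses only submodularity; symmetry and the normalisation conditions play no role.

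\emph{Well-definedness and leastness.} Consider any maximal absorption sequence \(X=X_0\subset X_1\subset\cdots\subset X_m\), where each step adds one element absorbable into the current set and \(X_m\) admits no further absorbable element. Let \(Y\) be any absorption-closed superset of \(X\). By induction on \(i\) we show \(X_i\subseteq Y\). The case \(i=0\) holds by assumption. For the inductive step, suppose \(X_{i+1}=X_i\cup\{e_i\}\) and \(X_i\subseteq Y\), and assume for contradiction that \(e_i\notin Y\). Write \(Y=X_i\cup W\). Persistence then makes \(e_i\) absorbable into \(Y\), contradicting the closedness of \(Y\).

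Hence every terminal set \(X_m\) is contained in every absorption-closed superset of \(X\). Each \(X_m\) is itself such a superset, so any two terminal sets contain each other and are therefore equal. Thus \(\fcl(X)\) does not depend on the order of absorption, and it is the least absorption-closed superset of \(X\). Termination is automatic because \(E\) is finite.

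\emph{Monotonicity.} Let \(X\subseteq X'\). Then \(\fcl(X')\) is an absorption-closed superset of \(X\), so leastness gives \(\fcl(X)\subseteq\fcl(X')\).

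\emph{Main obstacle.} There is no real obstacle here. The only point requiring care is to use the hypothesis \(e\notin X\cup W\), which guarantees that the intersection \(A\cap B\) is exactly \(X\). If \(e\) were allowed to lie in \(W\), the submodular inequality would not have the right form.
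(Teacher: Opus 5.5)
Your proof is correct and follows the paper's argument: the persistence step is the same submodular inequality applied to \(X\cup\{e\}\) and \(X\cup W\), and your induction placing each step of a maximal absorption sequence inside an arbitrary closed superset is exactly the paper's ``shadowing'' argument. The only difference is the order of deduction: you obtain order-independence of \(\fcl(X)\) from leastness, rather than stating it first as the paper does, and this makes the paper's one-line uniqueness claim fully explicit.
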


\begin{proof}
Submodularity applied to \(X\cup\{e\}\) and \(X\cup W\) gives
\[
 \lambda(X\cup W\cup\{e\})-\lambda(X\cup W)
 \leq \lambda(X\cup\{e\})-\lambda(X)\leq0.
\]
Thus an available absorption step remains available after any other steps.
All maximal absorption sequences therefore add the same elements.  The
least-closed-superset and monotonicity assertions follow by shadowing an
absorption sequence from the smaller set inside the larger set.
\end{proof}

\begin{lemma}[Small sides]\label{lem:small-sides}
Assume that \(\lambda\) is 3-connected in the unshifted sense:
\(\lambda(Z)\geq2\) whenever \(|Z|,|E\setminus Z|\geq2\).  If
\(\lambda(X)=2\) and \(|X|\leq3\), then
\(\fcl(E\setminus X)=E\).  Hence every ns-side and its complement have at
least four elements.
\end{lemma}

\begin{proof}
Add the elements of \(X\) to its complement one at a time.  The successive
connectivity values are bounded by \(2,2,1,0\), in the appropriate initial
segment, and 3-connectivity forces the two-element stage to have value two.
Thus every addition is an absorption step.
\end{proof}

\begin{lemma}[Reversal]\label{lem:reversal}
If \(\lambda(X)\leq2\) and \(\fcl(X)=E\), then \(E\setminus X\) has an
ordering every prefix of which has connectivity at most two.
\end{lemma}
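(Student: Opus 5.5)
Since \(\fcl(X)=E\), there is an absorption sequence \(e_1,\dots,e_m\) enumerating \(E\setminus X\) such that, writing \(X_i=X\cup\{e_1,\dots,e_i\}\), we have \(\lambda(X_i)\leq\lambda(X_{i-1})\) for every \(i\). By \cref{lem:persistence} such a sequence exists: all maximal absorption sequences from \(X\) reach \(\fcl(X)=E\). Along this sequence the values \(\lambda(X_i)\) are nonincreasing, so \(\lambda(X_i)\leq\lambda(X)\leq2\) for all \(i\). I will take the reversed ordering \(e_m,e_{m-1},\dots,e_1\) of \(E\setminus X\) as the required ordering.

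The key step is the symmetry of \(\lambda\). Each prefix of the reversed ordering is a set \(P_j=\{e_m,\dots,e_{j+1}\}\), and its complement in \(E\) is exactly \(X_j\). Hence
\[
  \lambda(P_j)=\lambda(E\setminus P_j)=\lambda(X_j)\leq2 .
\]
The two extreme cases need a quick check. The empty prefix corresponds to \(j=m\), where \(\lambda(\emptyset)=0\). The full prefix \(E\setminus X\) corresponds to \(j=0\), where \(\lambda(E\setminus X)=\lambda(X)\leq2\). The proof should also record that the absorption sequence, although it may be built one element at a time from \(X\), adds only elements outside \(X\), and that it terminates exactly when \(E\) is reached. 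This follows from the definition of \(\fcl\) as the set reached by repeated absorptions, together with the hypothesis \(\fcl(X)=E\).

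I expect no real obstacle. The only point needing care is the convention for "prefix", that is, whether the empty prefix and the full prefix are included. Both are covered above. If a stronger reading is wanted, namely nonincreasing connectivity along the complements, it also holds, since the values \(\lambda(X_i)\) are monotone.
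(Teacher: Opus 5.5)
Your proposal is correct and is essentially the paper's argument: the paper likewise reads an absorption sequence from \(X\) to \(E\) backwards and uses symmetry of \(\lambda\). Each prefix of the reversed order is the complement of an absorption stage \(X_j\), and \(\lambda(X_j)\leq\lambda(X)\leq2\).
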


\begin{proof}
Read an absorption sequence from \(X\) to \(E\) backwards and use symmetry
of \(\lambda\).
\end{proof}

\begin{lemma}[Gluing sequential sets]\label{lem:gluing}
Suppose \(E=W_1\dot\cup W_2\dot\cup W_3\), each \(W_i\) has a prefix
ordering of width at most two, and \(\lambda(W_1),\lambda(W_3)\leq2\).
Then \(\pw(\lambda)\leq4\).
\end{lemma}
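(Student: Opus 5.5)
We will build the path decomposition as a single linear ordering of \(E\): first \(W_1\) in its given prefix ordering, then \(W_2\) in its given prefix ordering, and finally \(W_3\) in the \emph{reverse} of its prefix ordering. Then we bound \(\lambda\) on every prefix of this ordering. Path-width of \(\lambda\) is the maximum of \(\lambda\) over the prefixes of a linear ordering of \(E\), so a uniform bound of \(4\) on all prefixes gives \(\pw(\lambda)\leq4\). A "prefix ordering of width at most two" for \(W_i\) means an ordering \(w_1,\dots,w_m\) of \(W_i\) with \(\lambda(\{w_1,\dots,w_j\})\leq2\) for all \(j\). This is the form delivered by \cref{lem:reversal}.

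There are three kinds of prefixes.

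\emph{Prefix inside \(W_1\).} A prefix \(P\subseteq W_1\) has \(\lambda(P)\leq2\) directly.

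\emph{Prefix ending in \(W_2\).} A prefix has the form \(W_1\cup Q\), where \(Q\) is a prefix of the ordering of \(W_2\). Submodularity gives
\[
\lambda(W_1\cup Q)\leq \lambda(W_1)+\lambda(Q)-\lambda(W_1\cap Q)=\lambda(W_1)+\lambda(Q)\leq 4,
\]
since \(W_1\cap Q=\emptyset\) and \(\lambda(\emptyset)=0\).

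\emph{Prefix ending in \(W_3\).} A prefix has the form \(W_1\cup W_2\cup R'\), where \(R'\subseteq W_3\) is what remains of \(W_3\) after deleting a prefix \(R\) of its ordering. Its complement is exactly \(R\), so symmetry gives \(\lambda(W_1\cup W_2\cup R')=\lambda(R)\leq2\). The boundary cases are consistent: the full prefix \(E\) has value \(0\), and the prefix \(W_1\cup W_2\) has value \(\lambda(W_3)\leq2\).

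There is no real obstacle. The only point needing care is that the hypothesis \(\lambda(W_1)\leq2\) is what makes the middle estimate work. Likewise \(\lambda(W_3)\leq2\) handles the transition into the reversed third block, although it is also covered by the empty prefix of \(W_3\). No connectivity assumption on \(\lambda\) beyond symmetry, submodularity and \(\lambda(\emptyset)=0\) is used.
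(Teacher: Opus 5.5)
Your proposal is correct and is essentially the paper's proof: the same ordering ($W_1$, then $W_2$, then $W_3$ reversed), disjoint subadditivity $\lambda(W_1\cup Z)\le\lambda(W_1)+\lambda(Z)\le 4$ for the middle block, and symmetry to turn prefixes in the last block into complements of prefixes of the original $W_3$ ordering. One wording point is cosmetic: the prefix $W_1\cup W_2$ corresponds to deleting the full prefix $R=W_3$, which leaves an empty remainder $R'$, and this is exactly where $\lambda(W_3)\le 2$ enters.
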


\begin{proof}
Order \(W_1\), then \(W_2\), then the reverse of an ordering of \(W_3\).
A prefix in the middle block is \(W_1\cup Z\), with
\(\lambda(W_1\cup Z)\leq\lambda(W_1)+\lambda(Z)\leq4\) by disjoint
subadditivity.  Prefixes in the first and last blocks have value at most two.
\end{proof}

The next form of the usual sink argument is useful because later we apply it
to a partitioned connectivity function whose boundary atoms can have
singleton value two.

\begin{lemma}[Displayed-edge sink lemma]\label{lem:sink}
Let \(\mu\) be a nonnegative integer-valued symmetric submodular function on
an atom set \(\mathcal A\), with \(\mu(\varnothing)=0\),
and let \(D\) be a branch decomposition of width at most two.  Assume:
\begin{enumerate}[label=\textup{(\alph*)}]
  \item \(\mu(\{a\})\leq2\) for every atom \(a\);
  \item for every edge of \(D\), at least one displayed side has full
        absorption closure in \(\mathcal A\).
\end{enumerate}
Then \(\mathcal A\) has a linear ordering of \(\mu\)-width at most four.
\end{lemma}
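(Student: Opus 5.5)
We use the standard sink argument: orient each edge of \(D\) toward a side that is not absorption-closed-full, find a node where all orientations point in, and read off a linear order from the sides meeting there.

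\textbf{Orientation and sink.} For each edge \(f\) of \(D\), let \(X_f\) and \(\mathcal A\setminus X_f\) be its displayed sides. By (b), at least one side has full closure. Orient \(f\) toward a side \(Y\) whose complement has full closure, i.e.\ \(\fcl(\mathcal A\setminus Y)=\mathcal A\); if both sides qualify, pick either. Each tree edge gets exactly one direction. A finite tree with every edge oriented has a node \(t\) with all incident edges pointing into it: follow arrows from any node, and since the tree has no cycles the walk must stop.

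\textbf{Leaf sink.} If \(t\) is a leaf with atom \(a\), the single incident edge displays \(\{a\}\) against \(\mathcal A\setminus\{a\}\) and points into \(t\). By the choice of orientation, \(\fcl(\{a\})=\mathcal A\). Take an absorption sequence starting from \(a\). Each prefix value is at most \(\mu(\{a\})\leq 2\), since absorption never increases \(\mu\). This ordering has width at most \(2\leq4\).

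\textbf{Internal sink.} Otherwise \(t\) has degree three, and its incident edges display a partition \(\mathcal A=W_1\dot\cup W_2\dot\cup W_3\). Here \(W_i\) is the side away from \(t\), with \(\mu(W_i)\leq2\) because \(D\) has width at most two. Each edge points toward \(t\), i.e.\ toward the side \(\mathcal A\setminus W_i\), so \(\fcl(W_i)=\mathcal A\).

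Read an absorption sequence for \(W_i\) backwards, as in the proof of \cref{lem:reversal}. This yields an ordering of \(\mathcal A\setminus W_i\) whose prefixes have \(\mu\)-value at most \(\mu(W_i)\leq2\). This is the only place symmetry is used, and \cref{lem:reversal} holds verbatim for \(\mu\).

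We need prefix orderings of width at most two on each \(W_i\) itself. For \(W_2\), use the reversal ordering of \(\mathcal A\setminus W_1\), which begins with some order on \(W_2\cup W_3\), and restrict it to \(W_2\). This needs care, so rather than restricting orderings we use a more direct order. List \(\mathcal A\setminus W_3=W_1\cup W_2\) by the reversal ordering coming from \(\fcl(W_3)=\mathcal A\), then append \(W_3\) in the reverse of the reversal ordering coming from \(\fcl(W_1)\), restricted to \(W_3\).

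\textbf{Bounding the prefixes (main obstacle).} Prefixes inside the first block have value at most two. A prefix of the form \((W_1\cup W_2)\cup Z\) with \(Z\subseteq W_3\) has complement \(W_3\setminus Z\). Its value is bounded using the fact that absorbing \(Z\) into \(W_1\) is available. If \(Z\) is an initial segment of an absorption sequence from \(W_1\) inside \(W_3\), then \(\mu(W_1\cup Z)\leq\mu(W_1)\leq2\). Submodularity combined with \(\mu(W_2)\leq2\) then gives \(\mu(W_1\cup W_2\cup Z)\leq\mu(W_1\cup Z)+\mu(W_2)\leq4\) through disjoint subadditivity. Subadditivity follows from submodularity together with \(\mu(\varnothing)=0\).

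The delicate point is ensuring that the restriction to \(W_3\) of an absorption sequence from \(W_1\) still has prefixes with value at most two once added to \(W_1\). \cref{lem:persistence} guarantees this. Since absorbable elements stay absorbable as the set grows, we may reorder an absorption sequence from \(W_1\) to \(\mathcal A\) so that the \(W_3\)-elements come first whenever they are available. If this fails, we fall back to the \cref{lem:gluing} template, with the three blocks supplied by the three reversal orderings.

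Hypothesis (a) enters in the degenerate cases where some \(W_i\) is a single atom, so that its singleton value is controlled. The final bound is \(4\).
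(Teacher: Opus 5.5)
\textbf{There is a genuine gap: your orientation runs the wrong way.} You point each edge toward a side \(Y\) with \(\fcl(\mathcal A\setminus Y)=\mathcal A\), that is, away from a fully closed side.

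For a leaf edge the side \(Y=\{a\}\) always qualifies, because \(\mu(\mathcal A)=\mu(\varnothing)=0\) makes \(a\) absorbable into \(\mathcal A\setminus\{a\}\). It is also the only admissible choice when \(\fcl(\{a\})\neq\mathcal A\). Hence every sink may be a leaf. For example, take the cut-rank function of a cycle \(C_n\) with \(n\geq5\) and a caterpillar decomposition following the cyclic order. Here (a) and (b) hold, and no singleton absorbs anything. So every leaf edge is forced into its leaf, and every internal node has an outgoing leaf edge.

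At a leaf sink your rule yields only the vacuous fact \(\fcl(\mathcal A\setminus\{a\})=\mathcal A\); the claimed \(\fcl(\{a\})=\mathcal A\) does not follow. At an internal sink you obtain \(\fcl(W_i)=\mathcal A\), so \cref{lem:reversal} orders the complements \(\mathcal A\setminus W_i\) rather than the branches, as you noticed.

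Your patch is not justified. \cref{lem:persistence} transfers absorbability from a set to its supersets, not to subsets such as \(W_1\) together with the earlier \(W_3\)-elements. So it does not let you move the \(W_3\)-elements of an absorption sequence from \(W_1\) ahead of its \(W_2\)-elements, and the bound \(\mu(W_1\cup Z)\leq2\) is unjustified. The fallback to \cref{lem:gluing} also fails, because there are no width-two orderings of the \(W_i\) to glue.

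Your concatenation does work at a genuine internal sink, by a different estimate. Let \(C\) be the absorption prefix from \(W_1\) ending at the last element of \(Z\). Then \(W_1\cup W_2\cup Z=C\cup W_2\), and \(\mu(C\cup W_2)\leq\mu(C)+\mu(W_2)\leq4\). But under your orientation such a sink need not exist.

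\textbf{The fix is the paper's argument.} Orient every edge toward a displayed side that \emph{has} full closure. Leaf edges can then always point away from the leaf, so for \(|\mathcal A|\geq3\) following the arrows ends at an internal node. There each \(\mathcal A\setminus W_i\) is fully closed. Since \(\mu(\mathcal A\setminus W_i)=\mu(W_i)\leq2\), \cref{lem:reversal} gives a width-two ordering of each branch \(W_i\) itself. Listing \(W_1\), then \(W_2\), then \(W_3\) reversed, as in \cref{lem:gluing}, gives width at most four. The case \(|\mathcal A|\leq2\) is trivial.
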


\begin{proof}
If \(|\mathcal A|\leq2\), any ordering has width at most two.  Assume
\(|\mathcal A|\geq3\).
Orient every leaf edge away from its leaf, since the complementary side
absorbs the remaining atom in one step and therefore has full closure.
Orient every other edge toward any displayed side having full closure.
Hence an internal sink exists.  If
\(W_1,W_2,W_3\) are the atom sets in the three branches at the sink, then
\(\fcl(\mathcal A\setminus W_i)=\mathcal A\).  By
\cref{lem:reversal}, each \(W_i\) has a width-two ordering.  Apply
\cref{lem:gluing}.  Its proof only uses symmetry and submodularity, so the
larger singleton bound causes no difficulty.
\end{proof}

To make this closure available to the later logical construction, we record
its definability at bounded cut-rank.  For each fixed \(r\), the predicate
\(\rho_G(X)\leq r\) is first-order with
one set parameter: guess at most \(r\) rows in \(X\) and require every row in
\(X\) to be one of their \(2^r\) binary combinations on
\(V(G)\setminus X\).  Consequently exact rank two, absorbability over a
rank-at-most-two set, and the property of being absorption-closed are
\(\CMSO\)-definable.

\begin{lemma}[CMSO full closure at width two]\label{lem:logic-closure}
There is a \(\CMSO\) formula \(\operatorname{Fcl}_2(X,Y)\) which, whenever
\(\rho_G(X)\leq2\), holds exactly when \(Y=\fcl(X)\).
\end{lemma}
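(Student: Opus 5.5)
Since \cref{lem:persistence} shows that \(\fcl(X)\) is the least absorption-closed superset of \(X\), the plan is to define
\[
 \operatorname{Fcl}_2(X,Y)\;:=\;X\subseteq Y\wedge \operatorname{Closed}(Y)\wedge \forall Z\,\bigl(X\subseteq Z\wedge\operatorname{Closed}(Z)\rightarrow Y\subseteq Z\bigr),
\]
where \(\operatorname{Closed}(Z)\) says that no \(e\notin Z\) satisfies \(\rho_G(Z\cup\{e\})\leq\rho_G(Z)\). The problem is that \(\operatorname{Closed}\) must compare two cut-ranks of arbitrary size, and only the bounded predicates \(\rho_G(\cdot)\leq r\) are available. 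The key point is that every set actually encountered has cut-rank at most two, so only these predicates are needed.

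First I restrict attention to supersets \(Z\supseteq X\) with \(\rho_G(Z)\leq 2\). Absorption never increases \(\lambda\), so every set in an absorption sequence from \(X\) has value at most \(\rho_G(X)\leq2\). In particular \(\fcl(X)\) itself has cut-rank at most two. I then define the relativized predicate
\[
 \operatorname{Closed}_2(Z):\ \rho_G(Z)\leq2\wedge\forall e\notin Z\ \neg\operatorname{Abs}(Z,e),
\]
where \(\operatorname{Abs}(Z,e)\) is the finite disjunction over \(r\in\{0,1,2\}\) of \(\rho_G(Z)=r\wedge\rho_G(Z\cup\{e\})\leq r\). Each conjunct is expressible using the first-order predicates \(\rho_G(\cdot)\leq r\) recorded before the lemma, with \(\rho_G(Z)=r\) written as \(\rho_G(Z)\le r\wedge\neg(\rho_G(Z)\le r-1)\). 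With this, \(\operatorname{Fcl}_2\) is the displayed formula with \(\operatorname{Closed}\) replaced by \(\operatorname{Closed}_2\).

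For correctness, let \(Y=\fcl(X)\). Then \(Y\supseteq X\) and \(\rho_G(Y)\leq2\), and \(Y\) is absorption-closed, so \(\operatorname{Closed}_2(Y)\) holds. Now take any \(Z\supseteq X\) satisfying \(\operatorname{Closed}_2\). It is genuinely absorption-closed, because its rank is at most two and \(\operatorname{Abs}\) is then exact. Hence \(Y\subseteq Z\) by the least-superset part of \(\cref{lem:persistence}\). For uniqueness, any \(Y'\) satisfying the formula is closed and contains \(X\), so \(Y\subseteq Y'\). Minimality of \(Y'\) applied to \(Z=Y\) gives \(Y'\subseteq Y\).

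The only delicate step is making sure the universally quantified \(Z\) need not range over sets of large cut-rank, where closedness would not be expressible. Restricting to \(\rho_G(Z)\leq2\) is harmless because \(\fcl(X)\) itself lies in this class. A closed set of larger rank cannot occur as a minimal element once the comparison with \(\fcl(X)\) is made. This explains the hypothesis \(\rho_G(X)\leq2\) in the statement.
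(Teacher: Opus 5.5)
Your proof is correct and follows the paper's own argument: you characterize \(\fcl(X)\) as the least superset of \(X\) that has cut-rank at most two and admits no singleton absorption, and you justify the rank restriction by noting that absorption from a set of cut-rank at most two never raises the cut-rank above two. Your explicit finite disjunction over \(r\in\{0,1,2\}\) for the absorbability predicate simply spells out the definability remark the paper records just before the lemma.
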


\begin{proof}
By \cref{lem:persistence}, \(\fcl(X)\) is the least absorption-closed
superset of \(X\).  Say that \(Y\) contains \(X\), has rank at most two, is
closed against every singleton absorption, and is contained in every set
with those three properties.  This is a monadic second-order definition.
Every absorption sequence from a rank-at-most-two set stays at rank at most
two, so the definition returns precisely \(\fcl(X)\).
\end{proof}

\section{Flowers, strong classes, and the canonical laminar tree}

Throughout this section, \((E,\lambda)\) is the cut-rank connectivity system
of a split-prime graph of rank-width at most two.  For at least five vertices,
split-primality says precisely that
\[
 \lambda(X)\geq2\qquad
 \text{whenever }|X|,|E\setminus X|\geq2.
\tag{3.1}\label{eq:prime-connectivity}
\]
If \(|E|\leq8\), we treat the whole graph as one bounded local piece and do
not invoke the Clark--Whittle partial-tree machinery.  Accordingly, every
statement below involving a maximal partial \(2\)-tree is made under the
additional standing assumption \(|E|\geq9\).  The closure formulas and the
strong-core set system remain meaningful at every order.

We now recall the order-two form of the Clark--Whittle machinery.  Let
\(\mathcal T_0\) consist of the empty set and all singleton subsets of
\(E\).  Under \eqref{eq:prime-connectivity}, this is the unique singleton
tangle of order two.  If \(|E|\geq9\), it is robust in the sense of
Clark--Whittle: eight \(\mathcal T_0\)-weak sets, each of size at most one,
cannot cover \(E\).  Moreover, their tangle full closure is exactly the
absorption closure of \cref{lem:persistence}, because the nonempty
\(\mathcal T_0\)-weak sets are singletons.  While both sides contain at
least two elements, every permitted intermediate set has value exactly two
by \eqref{eq:prime-connectivity}, so ``remain 2-separating'' is the same as
``do not increase''; once the complementary side has size at most one, the
remaining values decrease to zero.

Define
\[
 \mathcal S=\{X\subseteq E:\lambda(X)\leq2,\
 E\setminus X\text{ is }\mathcal T_0\text{-strong},\
 \fcl(E\setminus X)\neq E\}.
\]
This is the basic tree-compatible family of Clark--Whittle
\cite[p.~8, axioms (S1)--(S2)]{ClarkWhittle}.  Their
\((2,\mathcal S)\)-separations require both sides to belong to
\(\mathcal S\), so they are exactly the non-sequential exact
\(2\)-separations used here.  Equality of the unordered closure pairs
therefore gives axiom (S1).  For (S2), if \(X\in\mathcal S\) and
\((Y,E\setminus Y)\) is a \(\mathcal T_0\)-strong \(2\)-separation with
\(X\subseteq Y\), then monotonicity gives
\[
 \fcl(E\setminus Y)\subseteq\fcl(E\setminus X)\neq E,
\]
and hence \(Y\in\mathcal S\).  Membership of \(X\) alone does not require
\(\fcl(X)\neq E\).  Clark--Whittle partial 2-sequences now
consist of singleton additions; their Lemmas~3.4 and~3.6 identify the full
closure with a maximal such sequence, while Lemma~3.8 gives the closure-pair
equivalence calculus used below.  Thus the terminology in this paper is a
literal specialization of theirs rather than a different closure theory.

We use two consequences of that theory.  The first combines the defining
axioms (P1)--(P5) of a partial \((2,\mathcal S)\)-tree, Theorem~7.1, and
Corollary~7.2 of Clark--Whittle~\cite{ClarkWhittle}.  A maximal tree exists
because the finite system has only finitely many separation classes.

\begin{theorem}[Clark--Whittle partial-tree theorem, specialized]
\label{thm:CW}
Let \((E,\lambda)\) satisfy \eqref{eq:prime-connectivity}, with \(|E|\geq9\).
There is an \(\mathcal S\)-maximal partial \((2,\mathcal S)\)-tree \(T\),
abbreviated a maximal partial \(2\)-tree, with bag nodes and flower nodes,
such that every non-sequential exact \(2\)-separation is equivalent, under
equality of the two full closures, to a separation displayed by an edge or a
flower node of \(T\).  Every flower has no loose petal, is either an
anemone or a daisy, and has \(\mathcal S\)-order at least three.  In
particular, it displays at least two inequivalent non-sequential classes.
\end{theorem}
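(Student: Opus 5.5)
This statement is a specialization of Clark--Whittle rather than a new result, so the plan is to verify that every hypothesis of their Theorem~7.1 and Corollary~7.2 holds in our setting and then translate their conclusions into the present vocabulary.

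\textbf{Step 1: hypotheses.} The hypotheses are as follows.
\begin{enumerate}[label=\textup{(\alph*)}]
  \item \(\lambda=\rho_G\) is a connectivity function. It is symmetric and submodular, and singletons have value at most one, which is at most \(k=2\).
  \item \(\mathcal T_0\) is a robust tangle of order two. By \eqref{eq:prime-connectivity}, every 2-separating set with both sides of size at least two has value exactly two. Hence the weak sets are precisely the sets of size at most one, and these satisfy the tangle axioms. Robustness holds because \(|E|\geq9\) exceeds the union of eight singletons.
  \item \(\mathcal S\) satisfies (S1) and (S2). This was checked just before the statement, using \cref{lem:persistence} for monotonicity.
  \item The tangle full closure coincides with \(\fcl\). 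This was also observed above: intermediate values stay at exactly two while both sides are large.
\end{enumerate}

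\textbf{Step 2: existence of a maximal tree.} A single-bag tree is a partial \((2,\mathcal S)\)-tree. Equivalence classes of \((2,\mathcal S)\)-separations, under equality of the closure pair, are finitely many, and each proper refinement of a partial tree displays strictly more classes. Hence a maximal tree exists.

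\textbf{Step 3: translation of Theorem~7.1.} Theorem~7.1 states that every \((2,\mathcal S)\)-separation is equivalent to one displayed by an edge or a flower of a maximal tree. Corollary~7.2 gives the structure of the flowers. A \((2,\mathcal S)\)-separation has both sides in \(\mathcal S\), so it is exactly a non-sequential exact 2-separation; exactness follows from \eqref{eq:prime-connectivity} together with \cref{lem:small-sides}. Clark--Whittle equivalence is equality of full closures, by their Lemma~3.8 and Step~1(d). The displayed-separation statement therefore transfers verbatim.

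\textbf{Step 4: flower properties.} The properties that each flower has no loose petal, is an anemone or a daisy, and has \(\mathcal S\)-order at least three are part of their axioms (P1)--(P5) and Corollary~7.2 for maximal trees. I expect this step to be the main obstacle. In particular, the absence of loose petals and the order bound depend on their maximality conventions, and one must check that these do not use a value \(\lambda(\{e\})=k\) that cannot arise here.

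For the final clause, take a flower of \(\mathcal S\)-order at least three with petals \(P_1,\dots,P_m\). Since there are no loose petals, each of \(P_1\) and \(P_1\cup P_2\) is an ns-side, by \cref{lem:small-sides} and the non-sequentiality built into the petal definition. Their closure pairs differ, because \(P_3\) lies outside both closures. This exhibits two inequivalent non-sequential classes.
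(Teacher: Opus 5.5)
Your Steps 1--4 follow the paper's route. The paper gives no independent proof. It checks that \(\mathcal T_0\) is a robust order-two tangle whose full closure is \(\fcl\), that \(\mathcal S\) satisfies (S1)--(S2), and that \((2,\mathcal S)\)-separations are exactly the non-sequential exact \(2\)-separations. It then cites (P1)--(P5), Theorem~7.1 and Corollary~7.2 of Clark--Whittle, and obtains a maximal tree because there are only finitely many classes. Your claim that every proper refinement displays strictly more classes is unnecessary: maximality concerns the set of displayed classes, and finiteness alone suffices.

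\textbf{The gap is in your final paragraph.}
\begin{itemize}
\item \emph{Petals need not be ns-sides.} You assert that \(P_1\) and \(P_1\cup P_2\) are ns-sides ``by the non-sequentiality built into the petal definition'', but no such condition exists. As the paper's own check of (P1) in \cref{prop:strong-normal} shows, an edge at a flower vertex need only display a \(\mathcal T_0\)-strong \(2\)-separation. No-looseness gives only \(\fcl(P_i)\neq E\), never \(\fcl(E\setminus P_i)\neq E\).
\item \emph{Outward-sequential petals really occur, and the paper relies on them.} In the proof of \cref{lem:narrow-flower}, every petal of a flower without loose petals satisfies \eqref{eq:outward}. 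By \cref{prop:strong-normal}(iii), every non-boundary petal of a tree flower satisfies \(\fcl(E\setminus P)=E\). When all petals are outward-sequential, as in \cref{lem:narrow-flower}, no petal is an ns-side under any labelling.
\item \emph{The closure-pair comparison is also misaimed.} Since \(P_3\subseteq E\setminus P_1\subseteq\fcl(E\setminus P_1)\), \(P_3\) does not lie outside both closures. Its avoiding \(\fcl(P_1)\) and \(\fcl(P_1\cup P_2)\) does not separate the two pairs. Even granting both sets were ns-sides, what would separate them is \(P_2\not\subseteq\fcl(P_1)\) (from no-looseness), together with non-sequentiality of \(P_1\cup P_2\) to exclude the crossed pairing.
\end{itemize}

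\textbf{How to fix it.} Derive the clause from what \(\mathcal S\)-order measures, not from a petal labelling. A flower whose displayed non-sequential separations fall into at most one class is equivalent to a flower with at most two petals. Hence \(\mathcal S\)-order at least three forces two inequivalent displayed classes. This is exactly how the paper uses the clause in the proof of \cref{prop:strong-normal}, where a flower of order two displays only the class \(\mathfrak s\).
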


The second consequence is exactly Clark--Whittle Lemma~6.2 combined with
their robust-tangle conformance theorem, Theorem~5.10
\cite{ClarkWhittle}.

\begin{theorem}[Clark--Whittle maximal-flower theorem, specialized]
\label{thm:CW-flower}
Under the hypotheses of \cref{thm:CW}, for every non-sequential exact
\(2\)-separation there is an \(\mathcal S\)-tight
\(\mathcal S\)-maximal \(2\)-flower \(\Phi\) which displays an equivalent
separation.  Every non-sequential exact \(2\)-separation conforms with
\(\Phi\): after replacement by an equivalent separation, it is displayed
as a union of petals or one of its sides is contained in a petal.
\end{theorem}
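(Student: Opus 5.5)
The statement is a specialization of Clark--Whittle, so the plan is to verify that their hypotheses hold in the present setting and then read off their conclusions with $k=2$, $\lambda=\rho_G$, tangle $\mathcal T_0$, and family $\mathcal S$. The first step restates what the preceding discussion already establishes. The connectivity system satisfies \eqref{eq:prime-connectivity}. $\mathcal T_0$ is the singleton tangle of order two, and it is robust because $|E|\geq9$, so eight sets of size at most one cannot cover $E$. Tangle full closure coincides with the absorption closure $\fcl$ of \cref{lem:persistence}. Finally, $\mathcal S$ satisfies axioms (S1) and (S2). These are exactly the standing hypotheses of Clark--Whittle Lemma~6.2 and Theorem~5.10. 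I would also note that Clark--Whittle's notions of tightness, maximality and petal display are relative to $\mathcal S$, and that their equivalence of separations is the closure-pair equivalence of their Lemma~3.8. Under our specialization, that is equality of the full closures of the two sides.

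The second step produces the flower. Given a non-sequential exact $2$-separation $(X,E\setminus X)$, both sides lie in $\mathcal S$: they are $\mathcal T_0$-strong because each has at least four elements by \cref{lem:small-sides}, and neither has full closure. So $(X,E\setminus X)$ is a $(2,\mathcal S)$-separation, and it is displayed by the trivial two-petal flower $(X,E\setminus X)$. Lemma~6.2 of Clark--Whittle, applied to this flower, extends it to an $\mathcal S$-tight, $\mathcal S$-maximal $2$-flower $\Phi$ which displays a separation equivalent to the original. Tightness is obtained by replacing petals by their closures or by moving absorbable elements, which preserves equivalence.

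The third step is conformance. Theorem~5.10 of Clark--Whittle states that, relative to a robust tangle, every $(2,\mathcal S)$-separation conforms with every $\mathcal S$-tight, $\mathcal S$-maximal flower. Conformance means that, after replacing the separation by an equivalent one, it is either displayed by $\Phi$ as a union of petals or has one side contained in a single petal. Since every non-sequential exact $2$-separation is a $(2,\mathcal S)$-separation by the second step, this gives the final sentence of the statement.

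The main obstacle is the bookkeeping of definitions rather than any new argument. Two points need care. First, Clark--Whittle's $\mathcal T_0$-strong condition and their notion of ``$(2,\mathcal S)$-separation'' must match our ``non-sequential exact'' separations, including the case of value below two. That case is excluded by \eqref{eq:prime-connectivity} together with \cref{lem:small-sides}. Second, their robustness threshold must be met by $|E|\geq9$. I would check the robustness constant in their definition and confirm that sets of size at most one cannot cover. If their constant were larger, I would raise the size threshold and add the excess bounded cases to the finite local-piece disjunction already described in the text.
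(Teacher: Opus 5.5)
Your proposal is correct and follows the paper's route: the paper gives no independent argument, but checks in the surrounding text that \(\mathcal T_0\) is a robust singleton tangle when \(|E|\geq9\), that tangle full closure is the absorption closure, and that \(\mathcal S\) satisfies (S1)--(S2), and then reads the statement off Clark--Whittle Lemma~6.2 (for the tight maximal flower) and Theorem~5.10 (for conformance), exactly as you do. Your additional checks, that both sides of a non-sequential exact \(2\)-separation lie in \(\mathcal S\) and that you would fall back to a larger bounded base case if the robustness constant were bigger, are harmless additions.
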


Here an anemone permits every union of petals, whereas a daisy permits the
cyclically consecutive unions.  We shall use only the display, conformance,
no-loose-petal, and terminal-bag relocation conclusions of these results.

Two ns-sides are \emph{equivalent} if their unordered closure pairs agree:
\[
 \bigl\{\fcl(X),\fcl(E\setminus X)\bigr\}
 =
 \bigl\{\fcl(Y),\fcl(E\setminus Y)\bigr\}.
\]
Two equivalence classes are \emph{compatible} if they have representatives
whose bipartitions do not cross.  A class is \emph{strong} if it is compatible
with every non-sequential class.

\begin{lemma}[The narrow flower lemma]\label{lem:narrow-flower}
If \((E,\lambda)\) has branch-width at most two and has no strong
non-sequential class, then \(\pw(\lambda)\leq4\).
\end{lemma}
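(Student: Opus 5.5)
The plan is to reduce to the displayed-edge sink lemma, \cref{lem:sink}, applied to \(\mu=\lambda\) on \(\mathcal A=E\) with a width-two branch decomposition \(D\). Hypothesis (a) holds because singletons have value at most one. Everything therefore depends on hypothesis (b): every edge of \(D\) has a displayed side with full closure.

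First, small cases. If \(|E|\leq 8\), I would argue directly. By \cref{lem:small-sides}, every exact \(2\)-separation with a side of size at most three is sequential, so any non-sequential separation has both sides of size at least four, which forces \(|E|=8\) with sides exactly four. For such a set I would either show by a direct gluing (\cref{lem:gluing}) that the width is at most four, or show that the unique balanced non-sequential class would itself be strong.

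So assume \(|E|\geq9\) and let \(T\) be a maximal partial \(2\)-tree as in \cref{thm:CW}. Suppose some edge \(f\) of \(D\) displays an exact \(2\)-separation \((X,E\setminus X)\) with \(\fcl(X)\neq E\) and \(\fcl(E\setminus X)\neq E\). If the value is at most one, then by \eqref{eq:prime-connectivity} one side has size at most one, and that separation is sequential. Otherwise the separation is non-sequential, and by \cref{thm:CW} its class is displayed by an edge or a flower node of \(T\). If it is displayed by a tree edge of \(T\), then by (P1)--(P5) that class is compatible with every class displayed by \(T\), hence with every non-sequential class. It is therefore strong, which contradicts the hypothesis.

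Hence every non-sequential class is displayed only at flower nodes. Next I would show that \(T\) consists of a single flower node \(\Phi\), possibly with terminal bags relocated. Otherwise some edge of \(T\) would display a non-sequential separation, by the maximality and no-loose-petal conclusions, and that class would be strong. So every non-sequential separation conforms to the single maximal flower \(\Phi\) of \cref{thm:CW-flower}, and \(\Phi\) has order at least three. With \(\Phi\) in hand I would build the ordering directly rather than through \(D\). Let the petals be \(P_1,\dots,P_m\) in cyclic order for a daisy, or in any order for an anemone. I would then show three things:
\begin{enumerate}[label=\textup{(\alph*)}]
  \item each petal has full closure in its complement. Otherwise a separation nested inside a petal would be non-sequential and would not be displayed by \(\Phi\) as a union of petals, contradicting maximality of \(\Phi\).
  \item by \cref{lem:reversal}, each petal \(P_i\) has a width-two ordering.
  \item the concatenation \(P_1,\dots,P_m\) has width at most four. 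A prefix inside \(P_i\) is \(P_1\cup\dots\cup P_{i-1}\cup Z\). The first union is a consecutive union of petals and so has value two; \(Z\) is a prefix of \(P_i\) of value at most two. Subadditivity, as in \cref{lem:gluing}, then gives at most four.
\end{enumerate}

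The main obstacle is step (a) together with the claim that \(T\) is a single flower. The sink lemma's hypothesis (b) alone does not control petal interiors. I need the precise Clark--Whittle conformance statement: every non-sequential separation either is a union of petals or has a side inside a petal. From this, a side inside a petal whose complement is not full-closure-absorbing would give a second flower or tree edge, and hence a strong class. I would try to derive a contradiction with \(\mathcal S\)-maximality by refining \(\Phi\), using the closure-pair calculus of their Lemma 3.8. If that fails, the fallback is to apply \cref{lem:sink} inside each petal, treating the rest of the petals as a single atom of value at most two.
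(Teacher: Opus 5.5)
\textbf{Verdict: there is a genuine gap at your step (a).} Your skeleton matches the paper's: take the flower \(\Phi\) from \cref{thm:CW-flower}, show every petal satisfies \(\fcl(E\setminus P_i)=E\), order each petal by \cref{lem:reversal}, and concatenate using disjoint subadditivity. The detour through \(T\), and the claim that \(T\) is a single flower (which you only assert), are unnecessary, since \cref{thm:CW-flower} already supplies \(\Phi\) and conformance directly.

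\textbf{Why your argument for (a) fails.} Step (a) carries all the weight, and the argument you sketch for it does not work. If \(\fcl(E\setminus P_i)\neq E\), nothing produces a non-sequential separation nested inside \(P_i\). Even if one existed, it would not contradict maximality of \(\Phi\): conformance explicitly allows non-sequential separations with a side inside a petal. Your proposed repair, refining \(\Phi\) to contradict \(\mathcal S\)-maximality, aims at the wrong target. Your fallback of running \cref{lem:sink} inside a petal, with the rest as one atom, also fails. Even granting its unverified hypotheses, it yields petal orderings of width up to four, so concatenation after a union of earlier petals only gives \(2+4=6\).

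\textbf{The missing idea.} Apply the hypothesis ``no strong class'' to the petal cut itself.
\begin{enumerate}[label=\textup{(\roman*)}]
  \item Because \(\Phi\) is \(\mathcal S\)-tight, Clark--Whittle Lemma~4.3 gives that it has no loose petal. Hence \(\fcl(P_i)\neq E\), since otherwise a petal adjacent to \(P_i\) would lie in \(\fcl(P_i)\) and be loose. You never invoke no-looseness here, and without it a petal with \(\fcl(P_i)=E\) would get no width-two ordering from \cref{lem:reversal}.
  \item If also \(\fcl(E\setminus P_i)\neq E\), then \(P_i\mid E\setminus P_i\) is a non-sequential class.
  \item By conformance in \cref{thm:CW-flower}, every other non-sequential class has a representative that is either a union of petals or has a side inside one petal. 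Either kind is noncrossing with a single-petal cut.
  \item So the petal class would be strong, contradicting the hypothesis. This gives \(\fcl(E\setminus P_i)=E\) for every \(i\).
\end{enumerate}

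\textbf{The small case.} You leave \(|E|\leq 8\) unresolved, but it is immediate. From \(\lambda(\{v\})\leq 1\) and subadditivity, \(\lambda(X)\leq\min\{|X|,|E\setminus X|\}\leq 4\), so every ordering has width at most four.
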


\begin{proof}
If \(|E|\leq8\), then
\(\lambda(X)\leq\min\{|X|,|E\setminus X|\}\leq4\) for every \(X\), so any
ordering has width at most four.  Hence assume \(|E|\geq9\).
If no non-sequential separation exists, orient the edges of a width-two
branch decomposition toward a full-closure side.  Small displayed sides are
handled by \cref{lem:small-sides}; an internal sink and
\cref{lem:reversal,lem:gluing} give width four.

Otherwise choose a non-sequential exact \(2\)-separation.  By
\cref{thm:CW-flower}, there is an \(\mathcal S\)-tight
\(\mathcal S\)-maximal flower
\(\Phi=(P_1,\ldots,P_m)\) which displays an equivalent separation and with
which every non-sequential class conforms.  Tightness and Clark--Whittle
Lemma~4.3 imply that \(\Phi\) has no loose petal.

No-looseness implies \(\fcl(P_i)\neq E\): if \(\fcl(P_i)=E\), then a petal
adjacent to \(P_i\) in the flower order is contained in \(\fcl(P_i)\) and is
therefore loose.
Suppose also that \(\fcl(E\setminus P_i)\neq E\).  Then the petal cut is a
non-sequential class.  By \cref{thm:CW-flower}, every other non-sequential
class has an equivalent representative which is either a union of petals or
has one side contained in one petal.  Such a representative is noncrossing
with \(P_i\mid E\setminus P_i\).  Hence the petal class is strong, a
contradiction.  Therefore
\[
 \fcl(E\setminus P_i)=E\qquad(i=1,\ldots,m).
\tag{3.2}\label{eq:outward}
\]
By \cref{lem:reversal}, every petal has a width-two prefix ordering.
Concatenate these orderings in the cyclic flower order.  A prefix meeting
\(P_i\) is \(U\cup Z\), where \(U\) is a consecutive union of earlier petals
and \(Z\) is a prefix inside \(P_i\).  The flower axioms and disjoint
subadditivity give
\[
 \lambda(U\cup Z)\leq\lambda(U)+\lambda(Z)\leq2+2=4.
\]
\end{proof}

The strong equivalence classes are represented by canonical cores, which we
now define.  For an oriented ns-side write
\[
 \vec X=(A_X,B_X):=(\fcl(X),\fcl(E\setminus X)),
 \qquad \vec X^{\,*}=(B_X,A_X),
\]
and put
\[
 (A,B)\preceq(C,D)\quad\Longleftrightarrow\quad
 A\subseteq C\text{ and }B\supseteq D.
\tag{3.3}\label{eq:closure-order}
\]
The \emph{core} of this orientation is
\[
 K(A,B)=E\setminus B.
\tag{3.4}\label{eq:core}
\]
The closure calculus of Clark--Whittle implies that \(K(A_X,B_X)\) is an
ns-side equivalent to \(X\), specifically by
\cite[Lemma~3.8(iii)--(iv)]{ClarkWhittle}.  Concretely, \(B_X\) is fully
closed, symmetry makes its complement exactly 2-separating, and the
partial-sequence argument moves the fringe
\(X\setminus K(A_X,B_X)\) without changing either full closure.

The following elementary form of that calculus will be used repeatedly.

\begin{lemma}[Closure-pair compatibility]\label{lem:pair-compatibility}
Two non-sequential classes are compatible if and only if some orientations of
their closure pairs are comparable under \(\preceq\).
\end{lemma}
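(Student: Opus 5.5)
We prove the two directions separately, working throughout with the closure pairs \(\vec X=(A_X,B_X)\) and the core \(K(A,B)=E\setminus B\). The two facts we lean on are monotonicity of \(\fcl\) (\cref{lem:persistence}) and the Clark--Whittle fact that \(K(A_X,B_X)\) is an ns-side equivalent to \(X\). Note first that for an ns-side \(X\) we always have \(X\subseteq A_X\) and \(E\setminus X\subseteq B_X\).

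\emph{Noncrossing implies comparable.} Suppose representatives \(X\) and \(Y\) have noncrossing bipartitions. After complementing one or both sides we may assume \(X\subseteq Y\), equivalently \(E\setminus Y\subseteq E\setminus X\). Monotonicity then gives
\[
A_X=\fcl(X)\subseteq\fcl(Y)=A_Y
\quad\text{and}\quad
B_Y=\fcl(E\setminus Y)\subseteq\fcl(E\setminus X)=B_X .
\]
This says exactly that \(\vec X\preceq\vec Y\). Replacing a set by its complement swaps the orientation \(\vec X\mapsto\vec X^{\,*}\), so in the general case some orientations of the two closure pairs are comparable. Since equivalence is defined by equality of unordered closure pairs, this conclusion does not depend on which representatives were chosen.

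\emph{Comparable implies noncrossing.} Suppose \((A,B)\preceq(C,D)\), where \((A,B)\) is an orientation of the first class's closure pair and \((C,D)\) of the second's. Take the core representatives \(X'=E\setminus B\) and \(Y'=E\setminus D\). Each is an ns-side equivalent to its class, and its oriented closure pair is the given one. The only point to check is that taking the core of \((B,A)\) instead of \((A,B)\) still yields a representative of the same unordered class, and this holds because \(K\) is applied to an orientation of the same closure pair. From \(B\supseteq D\) we get \(X'=E\setminus B\subseteq E\setminus D=Y'\). The bipartitions \(X'\mid E\setminus X'\) and \(Y'\mid E\setminus Y'\) are therefore nested, hence noncrossing, so the two classes are compatible.

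\emph{Main obstacle.} Only the second direction carries real content: it needs the cores to be genuine ns-side representatives whose closure pairs are exactly \((A,B)\) and \((C,D)\). This is precisely what Clark--Whittle Lemma~3.8(iii)--(iv) supplies. Concretely, we will verify that \(\fcl(E\setminus B)=A\) and \(\fcl(B)=B\). The second equality holds because \(B\) is fully closed. For the first, we will use the partial-sequence argument already quoted: moving the fringe \(X\setminus K\) out of \(X\) changes neither full closure.
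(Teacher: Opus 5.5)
Your proof is correct and follows essentially the same route as the paper: comparable orientations give nested cores, which are equivalent ns-side representatives by the Clark--Whittle core fact, and noncrossing representatives yield comparable closure pairs by monotonicity of \(\fcl\). The only difference is cosmetic. You apply monotonicity directly to the noncrossing representatives, whereas the paper first replaces them by their cores; your version is slightly cleaner, because the core fact is then needed only in the comparable-to-compatible direction.
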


\begin{proof}
If \((A,B)\preceq(C,D)\), then the corresponding cores satisfy
\(E\setminus B\subseteq E\setminus D\), and give nested representatives.
Conversely, replace two noncrossing representatives by their cores and orient
the nonempty corner toward the containment.  Monotonicity of \(\fcl\) gives
the two containments in \eqref{eq:closure-order}.  This is also the
order-two case of the Clark--Whittle equivalence calculus.
\end{proof}

\begin{lemma}[Nested equivalent sides]\label{lem:nested-equivalent}
Let \(R\subseteq S\) be oriented equivalent ns-sides.  Then the elements of
\(S\setminus R\) can be ordered \(s_1,\ldots,s_t\) so that
\[
 \lambda(R\cup\{s_1,\ldots,s_j\})=2
 \qquad(0\leq j\leq t).
\tag{3.5}\label{eq:nested-sequence}
\]
\end{lemma}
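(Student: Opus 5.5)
Since \(R\) and \(S\) are oriented equivalent ns-sides with \(R\subseteq S\), monotonicity of \(\fcl\) gives \(\fcl(R)\subseteq\fcl(S)\) and \(\fcl(E\setminus S)\subseteq\fcl(E\setminus R)\). Equivalence as unordered pairs, together with this containment, should force the oriented equality \(\fcl(R)=\fcl(S)=:A\) and \(\fcl(E\setminus R)=\fcl(E\setminus S)=:B\). Otherwise the swapped identification \(\fcl(R)=\fcl(E\setminus S)\) would hold. Then \(E\setminus S\subseteq\fcl(R)\), so \(\fcl(R)\supseteq R\cup(E\setminus S)\), and symmetrically \(\fcl(E\setminus R)\supseteq S\). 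Combining these with the reverse containments, \(A\cup B\) would be built from both sides. I would then argue that \(\fcl(R)\supseteq R\cup\fcl(E\setminus S)\), which makes \(\fcl(R)\) absorb the closure of the complementary side. The goal is to conclude that \(\fcl(R)=E\), contradicting non-sequentiality. If this cleanly fails, I would fall back on the convention that ``oriented equivalent'' already means the oriented closure pairs agree, which is how \cref{lem:pair-compatibility} uses orientations.

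With \(\fcl(E\setminus S)=\fcl(E\setminus R)=B\) in hand, the key point is that every element of \(S\setminus R\) lies in \(B\). I build the ordering in reverse, starting from \(E\setminus S\) and growing towards \(E\setminus R\). Since \(S\setminus R\subseteq E\setminus R\subseteq B=\fcl(E\setminus S)\), the persistence lemma \cref{lem:persistence} says the following. Run an absorption sequence from \(E\setminus S\) to \(B\), and restrict it to the elements of \(S\setminus R\), shadowing it inside sets of the form \((E\setminus S)\cup Z\). Each element of \(S\setminus R\) then becomes absorbable at some stage. More concretely, I would order \(S\setminus R\) greedily: at each step, choose an element \(e\in S\setminus R\) absorbable into the current set \(W=(E\setminus S)\cup Z\). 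Such an element exists because \(W\subseteq E\setminus R\subseteq B\) and \(W\neq E\setminus R\). By \cref{lem:persistence}, some element of \(B\setminus W\) is absorbable, and we need one lying in \(S\setminus R\).

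This last point is the main obstacle. The absorbable element might lie in \(R\cap B\) rather than in \(S\setminus R\). To handle it I would use the values directly. Every intermediate set \(W\) satisfies \(\lambda(W)\le 2\), since absorptions never increase \(\lambda\). Both \(W\) and its complement have size at least two, because \(R\) and \(E\setminus S\) have at least four elements by \cref{lem:small-sides}. Hence \eqref{eq:prime-connectivity} gives \(\lambda(W)=2\) exactly.

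To guarantee \(\lambda(W)\le2\) along a path confined to \(S\setminus R\), I would use submodularity on \(W\) and the closure set \(W'\) reached by continuing the absorption sequence inside \(B\). This gives
\[
\lambda(W'\cap(E\setminus R))\le\lambda(W')+\lambda(E\setminus R)-\lambda(W'\cup(E\setminus R)).
\]
The right-hand side is at most \(2+2-2=2\) by \eqref{eq:prime-connectivity}, since \(W'\cup(E\setminus R)\) has a complement of size at least two. So intersecting the sequence with \(E\setminus R\) keeps value at most two, and therefore exactly two. This uncrossing-by-intersection step gives the desired monotone chain from \(E\setminus S\) to \(E\setminus R\) in which every set has value two.

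Finally, reverse this chain and take complements using symmetry of \(\lambda\). The result is an ordering \(s_1,\ldots,s_t\) of \(S\setminus R\) with \(\lambda(R\cup\{s_1,\ldots,s_j\})=2\) for all \(j\), which is \eqref{eq:nested-sequence}.
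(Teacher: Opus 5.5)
Your final argument is correct and is essentially the paper's proof applied to the complementary side. The paper runs an absorption sequence from \(R\) to \(\fcl(R)=\fcl(S)\) and intersects its prefixes with \(S\), so the ordering comes out directly; you intersect prefixes of a sequence from \(E\setminus S\) to \(\fcl(E\setminus S)=\fcl(E\setminus R)\) with \(E\setminus R\), and then complement and reverse, using the same submodularity-plus-3-connectivity uncrossing.

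Two parts of your write-up can be dropped or shortened. The greedy detour is unnecessary once the intersection step is in place. The orientation step closes in one line: the swapped pairing would give \(\fcl(E\setminus R)=\fcl(S)\supseteq S\cup(E\setminus R)=E\), contradicting that \(R\) is an ns-side.
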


\begin{proof}
Take an absorption sequence from \(R\) to \(\fcl(R)\).  All its proper
prefixes have value exactly two: they contain the ns-side \(R\), while their
complements contain \(E\setminus\fcl(R)\), and both have at least two
elements.  List the members of \(S\setminus R\) in the order in which they
occur in this sequence.  If \(C\) is the absorption prefix ending at
\(s_j\), then
\[
 R\cup\{s_1,\ldots,s_j\}=S\cap C.
\]
The union \(S\cup C\) contains \(R\), and its complement contains
\(E\setminus\fcl(R)\), so 3-connectivity gives
\(\lambda(S\cup C)\geq2\).  Submodularity now yields
\[
 4=\lambda(S)+\lambda(C)
 \geq\lambda(S\cap C)+\lambda(S\cup C),
\]
so \(\lambda(S\cap C)\leq2\); 3-connectivity makes it equal to two.
\end{proof}

Assume first that a strong class exists and guess one oriented strong ns-side
\(R\).  Put \(r=\vec R\).  For every strong class other than the anchor there
is a unique orientation \(s\) satisfying
\[
 s\preceq r\quad\text{or}\quad s\preceq r^*.
\tag{3.6}\label{eq:anchor-rule}
\]
Existence follows from compatibility with the anchor and
\cref{lem:pair-compatibility}: if \(r\preceq s\), then
\(s^*\preceq r^*\), and similarly for the other reverse comparison.
For uniqueness, if both orientations of \(s=(C,D)\) satisfy
\eqref{eq:anchor-rule}, then either \(C\cup D\) is contained in one proper
anchor closure, which is impossible because \(C\cup D=E\), or
\((C,D)\) is exactly \(r\) or \(r^*\).  We fix the anchor orientation as
\(r\).

\begin{lemma}[Anchor-core laminarity]\label{lem:anchor-laminar}
The canonical cores selected by \eqref{eq:anchor-rule} form a laminar family.
\end{lemma}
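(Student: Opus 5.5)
We must show: for any two strong classes with selected orientations \(s=(C,D)\), \(t=(C',D')\) (including the anchor \(r\) itself, whose core is \(K(r)=E\setminus B_R\)), the cores \(K(s)=E\setminus D\) and \(K(t)=E\setminus D'\) are nested or disjoint. The plan is to use that two strong classes are compatible, so by \cref{lem:pair-compatibility} some orientations are \(\preceq\)-comparable. There are four possibilities, up to symmetry: \(s\preceq t\), \(t\preceq s\), \(s\preceq t^*\), or \(s^*\preceq t\).

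The first two cases are immediate. If \(s\preceq t\) then \(D\supseteq D'\), so \(K(s)\subseteq K(t)\), and the case \(t\preceq s\) is symmetric. In the case \(s\preceq t^*\) we have \(C\subseteq D'\) and \(D\supseteq C'\). I would then like to conclude that the cores are disjoint. The natural route is to note that \(K(s)=E\setminus D\) is contained in \(C=\fcl(\text{side})\): indeed the core is an ns-side equivalent to \(s\), so it sits inside \(C\), or more directly \(E\setminus D\subseteq C\) since \(C\cup D=E\). Then \(K(s)\subseteq C\subseteq D'=E\setminus K(t)\), which gives disjointness. So every case with \(s\preceq t^*\) (or \(t\preceq s^*\)) yields disjoint cores, and \(\preceq\)-comparable oriented pairs always give nested or disjoint cores. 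Here I use \(C\cup D=E\), which holds since \(C\supseteq X\) and \(D\supseteq E\setminus X\).

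The remaining, and I expect principal, case is \(s^*\preceq t\), i.e. \(D\subseteq C'\) and \(C\supseteq D'\). This says the two selected orientations "point away from each other". Here the cores \(E\setminus D\supseteq E\setminus C'\) and \(E\setminus D'\) need not be laminar a priori, so the anchor rule must exclude this case. Using \eqref{eq:anchor-rule}, each of \(s,t\) is \(\preceq r\) or \(\preceq r^*\). I would split according to which.

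If both \(s\preceq r\) and \(t\preceq r\), then \(D\supseteq B_R\) and \(C'\subseteq A_R\). Combined with \(D\subseteq C'\) this gives \(B_R\subseteq A_R\). Since the two closures are proper and their union is \(E\), this forces \(A_R=E\), contradicting non-sequentiality. The case \(s,t\preceq r^*\) is identical. If instead \(s\preceq r\) and \(t\preceq r^*\), then \(C\subseteq A_R\) and \(C'\subseteq B_R\). With \(D\subseteq C'\) and \(D'\subseteq C\) this gives \(D\subseteq B_R\) and \(D'\subseteq A_R\). Together with \(D\supseteq B_R\) we get \(D=B_R\), and then \(C\subseteq A_R\) together with \(C\cup D=E\) needs more.

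In this mixed case I would argue directly that the cores are disjoint. We have \(K(s)=E\setminus D=E\setminus B_R\). For \(t\) we have \(D'\supseteq A_R\) from \(t\preceq r^*\), so \(K(t)=E\setminus D'\subseteq E\setminus A_R\subseteq B_R\), which is disjoint from \(K(s)\). More generally, whenever \(s\preceq r\) and \(t\preceq r^*\), the cores lie in \(E\setminus B_R\) and \(E\setminus A_R\) respectively, and these sets are disjoint because \(A_R\cup B_R=E\). So the mixed case is always disjoint, regardless of the mutual comparison, and the anchor itself (orientation \(r\)) is covered as \(s=r\).

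The final proof will therefore be organized by the anchor side first: the mixed side gives disjoint cores, while the same side uses compatibility and excludes \(s^*\preceq t\) by the contradiction above. The main delicate point I expect is checking the same-side exclusion rigorously, in particular the degenerate equalities where \(s\) or \(t\) equals \(r\).
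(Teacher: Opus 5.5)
Your proof is correct and follows essentially the same route as the paper. You split by anchor branch: on opposite branches the cores are disjoint because $K(s)\subseteq E\setminus B_R$, $K(t)\subseteq E\setminus A_R$ and $A_R\cup B_R=E$. On the same branch you use compatibility: nested or facing comparisons give nested or disjoint cores, and the outward comparison $s^*\preceq t$ is excluded because it forces $B_R\subseteq A_R$, i.e.\ $A_R=E$, contradicting non-sequentiality. Your initial detour in the mixed case, through $D=B_R$ under the assumption $s^*\preceq t$, is unnecessary, and you rightly replace it with the direct opposite-branch disjointness argument.
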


\begin{proof}
Let \(s=(C,D)\) and \(t=(F,H)\) be selected orientations.  If they lie on
opposite anchor branches, then
\[
 K(s)\subseteq E\setminus B_R,
 \qquad K(t)\subseteq E\setminus A_R,
\]
which are disjoint because \(A_R\cup B_R=E\).  If they lie on the same
branch, strongness makes their classes compatible.  A comparison
\(s\preceq t\) or \(t\preceq s\) nests their cores.  A facing comparison,
say \(s\preceq t^*\), gives \(D\supseteq F\), hence
\(K(s)\subseteq E\setminus F\), while \(K(t)\subseteq F\), so the cores are
disjoint.  The remaining comparison \(s^*\preceq t\) is impossible.
Indeed, if both selected orientations lie below the anchor orientation
\((A,B)\), then \(C,F\subseteq A\).  That comparison would also give
\(D\subseteq F\), forcing \(E=C\cup D\subseteq A\), contrary to
non-sequentiality of the anchor.  The same argument applies to its opposite
orientation, so the listed nesting and disjointness cases are exhaustive.
\end{proof}

Every ingredient in the preceding construction is \(\CMSO\)-definable.
Indeed, ns-sides and their equivalence are definable by
\cref{lem:logic-closure}; compatibility says that equivalent representatives
exist which do not cross; and strongness universally quantifies over
ns-sides.  With the one guessed anchor set \(R\), define the set predicate
\[
\begin{split}
 \operatorname{SET}_R(Z)\ \Longleftrightarrow\ &
 Z=E\ \vee\ |Z|=1\ \vee\\
 &\exists X\,[\operatorname{StrongNS}(X)\wedge
 \operatorname{Or}_R(X)\wedge
 Z=E\setminus\fcl(E\setminus X)].
\end{split}
\tag{3.7}\label{eq:set-predicate}
\]
Multiple representatives of one class define the same core, so this is
extensional.  If there is no strong class, use only \(E\) and the singletons.
Thus \eqref{eq:set-predicate} is a laminar set system in the convention of
Campbell--Guillon--Kant\'e--Kim--K\"ohler, in particular it does not contain
the empty set.

\begin{corollary}[Definable strong-core tree]\label{cor:strong-tree}
There is a nondeterministic \(\CMSO\)-transduction which, on every prime graph
of rank-width at most two, outputs the rooted laminar tree of the canonical
strong cores, with the original vertices as its leaves.
\end{corollary}

\begin{proof}
Apply Theorem~3.1 of Campbell--Guillon--Kant\'e--Kim--K\"ohler
\cite{CampbellEtAl26} to the definable laminar set system
\eqref{eq:set-predicate}.  Only one anchor set is needed to orient the
classes.  The laminar-tree transduction uses finitely many additional unary
colours, and no orientation set is chosen separately for each class.
\end{proof}

We next relate the canonical core system to the partial tree in
\cref{thm:CW}, thereby establishing the structural interface used later.

\begin{lemma}[No-loose concatenation]\label{lem:no-loose-concatenation}
Let \(\Phi=(P_1,\ldots,P_n)\) be a flower of a partial
\((2,\mathcal S)\)-tree, and let
\(U=\bigcup_{i\in I}P_i\) display a non-sequential class, where both \(I\)
and its complement contain at least two petals.  If \(I\) is consecutive in
the flower order, then both concatenations
\[
 (P_i:i\in I,\ E\setminus U)
 \quad\text{and}\quad
 (U,\ P_i:i\notin I)
\]
have no loose petals.
\end{lemma}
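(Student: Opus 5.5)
The plan is to check the two concatenations petal by petal. The looseness test will be put in a form that involves only a petal and its complement, so the new flowers inherit it directly from \(\Phi\). The only genuinely new petal is the merged one, and the non-sequentiality hypothesis handles it.

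First, we check that both sequences are flowers of the same type as \(\Phi\). Because \(I\) is consecutive, \(U\) and \(E\setminus U\) are cyclically consecutive unions of petals. If \(\Phi\) is a daisy, the consecutive unions of the new sequence are consecutive unions of the old one, so they are exactly \(2\)-separating. If \(\Phi\) is an anemone, every union of petals is permitted, so the same holds. Since \(I\) and its complement each contain at least two petals, each concatenation keeps at least three petals.

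Second, we reduce looseness to a statement about a petal and its complement. The property we want is that a petal \(Q\) of a flower is loose exactly when \(Q\subseteq\fcl(E\setminus Q)\), or equivalently when \(Q\) is contained in the closure of the union of its neighbouring petals. In Clark--Whittle this is given by the lemmas relating loose petals to the full closure of the complement, specialized as in \cref{thm:CW}. If the paper's working definition uses only the adjacent petals \(P_{i-1}\) and \(P_{i+1}\), we first show that the two forms coincide. The argument is that \(\fcl(P_{i-1}\cup P_{i+1})\subseteq\fcl(E\setminus P_i)\) by monotonicity (\cref{lem:persistence}). For the reverse direction, an absorption sequence from \(E\setminus P_i\) into \(P_i\) can be shadowed inside the union of the neighbours, using \cref{lem:nested-equivalent}-style submodular uncrossing against the exactly \(2\)-separating consecutive unions. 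This equivalence is the step I expect to be the main obstacle, because the anemone and daisy cases behave differently. I would try first to quote Clark--Whittle's characterization of loose petals directly, and fall back on the uncrossing argument only if it is needed.

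Third, with that characterization, the unmerged petals are immediate. In either concatenation, an unmerged petal \(P_i\) has the same complement \(E\setminus P_i\) as in \(\Phi\). So it is loose in the new flower if and only if it was loose in \(\Phi\), and by hypothesis it was not.

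Fourth, the merged petal is either \(U\) or \(E\setminus U\), with complement \(E\setminus U\) or \(U\) respectively. Suppose \(U\subseteq\fcl(E\setminus U)\). Then \(\fcl(E\setminus U)\supseteq U\cup(E\setminus U)=E\), which contradicts the non-sequentiality of the class displayed by \(U\). The symmetric argument rules out \(E\setminus U\subseteq\fcl(U)\). Hence neither concatenation has a loose petal.
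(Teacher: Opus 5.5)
There is a genuine gap, and it is in your second step. The proposed characterization of looseness is not correct, so no shadowing argument can establish it. As the paper uses it (see the proof of \cref{lem:narrow-flower} and the paper's proof of this lemma), looseness is a local condition: a petal is loose when it lies in the full closure of an \emph{adjacent} petal. Monotonicity gives only one direction, namely that a loose petal \(Q\) satisfies \(Q\subseteq\fcl(E\setminus Q)\). The converse fails. The paper explicitly considers flowers with no loose petal whose petals nevertheless satisfy \(\fcl(E\setminus P)=E\); see \eqref{eq:outward} and \cref{prop:strong-normal}(iii). By \cref{lem:small-sides}, your criterion would even declare every petal with at most three elements loose. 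Your variant with the closure of the union of the two neighbours fares no better, because for a flower with three petals it is literally the same condition.

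Because of this, your third step misses exactly the nontrivial case. An unmerged petal keeps its complement, but the petals next to the merged block change neighbours. In \((U,\ P_i:i\notin I)\), the petal \(P\notin I\) adjacent to the interval now neighbours \(U\) instead of the boundary petal \(R\subseteq U\). In \((P_i:i\in I,\ E\setminus U)\), a boundary petal of \(I\) now neighbours \(E\setminus U\). Since \(\fcl(U)\) may be strictly larger than \(\fcl(R)\), non-looseness in \(\Phi\) does not transfer.

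The missing idea is the paper's uncrossing claim. Let \(W\) be a consecutive union of at least two petals whose cut is non-sequential, let \(P\) be an outside petal adjacent to \(W\), and let \(R\subseteq W\) be the petal next to \(P\). Then \(P\not\subseteq\fcl(W)\). Otherwise take a maximal partial \(2\)-sequence \((X_h)\) for \(W\) with prefixes \(W_\ell\). The union \(W_\ell\cup R\cup P\) contains \(W\), and, using \(P\subseteq\fcl(W)\), its complement contains \(E\setminus\fcl(W)\). So it has connectivity at least two. Submodularity against \(\lambda(W_\ell),\lambda(R\cup P)\leq2\) then makes \(W_\ell\cap(R\cup P)=R\cup\bigl(P\cap\bigcup_{h\leq\ell}X_h\bigr)\) \(2\)-separating. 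Hence the sets \(P\cap X_h\) form a partial \(2\)-sequence for \(R\), giving \(P\subseteq\fcl(R)\), so \(P\) was already loose in \(\Phi\). Applying this with \(W=U\) and with \(W=E\setminus U\) handles the old petals.

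Your first step is fine. Your fourth step is also correct under the right definition: if \(U\subseteq\fcl(P)\) for an adjacent \(P\subseteq E\setminus U\), monotonicity gives \(\fcl(E\setminus U)=E\). This is a slightly more direct route than the paper's reduction to the boundary petal.
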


\begin{proof}
We record the uncrossing step rather than importing the tightness hypothesis
from Clark--Whittle Lemma~4.7.  Let \(W\) be a consecutive union of at least
two petals whose displayed cut is non-sequential, let \(P\) be a petal
outside \(W\) adjacent to it, and let \(R\subseteq W\) be the boundary petal
adjacent to \(P\) in the original flower.  We claim that
\(P\not\subseteq\fcl(W)\).  Otherwise choose a maximal partial 2-sequence
\((X_h)_{h=1}^m\) for \(W\).  For every \(\ell\), the two 2-separating sets
\[
 W\cup\bigcup_{h\leq\ell}X_h
 \quad\text{and}\quad R\cup P
\]
have a union whose one side contains the ns-side \(W\), while the other
contains \(E\setminus\fcl(W)\).  Both are strong, so that union has
connectivity at least two.  Submodularity therefore makes
\[
 R\cup\left(P\cap\bigcup_{h\leq\ell}X_h\right)
\]
2-separating.  After empty intersections are removed, the sets
\(P\cap X_h\) form a partial 2-sequence for \(R\).  Since
\(P\subseteq\bigcup_hX_h\), this gives \(P\subseteq\fcl(R)\), making \(P\)
a loose petal of the original flower, a contradiction.  This proves the
claim.

Relabel cyclically so that \(I\) is an interval.  In either stated
concatenation, an old petal can become loose only into the closure of the
new merged petal; the claim excludes this.  Conversely, if the merged petal
were contained in the closure of an adjacent old petal, then the original
boundary petal contained in the merged petal would already be loose.  All
other adjacent pairs are unchanged.  Hence neither concatenation has a
loose petal.
\end{proof}

\begin{proposition}[Class-normal form]\label{prop:strong-normal}
Assume \(|E|\geq9\).  A maximal partial \(2\)-tree may be chosen, within its Clark--Whittle
equivalence, so that every strong non-sequential class is displayed by an
edge.  On deleting those edges:
\begin{enumerate}[label=\textup{(\roman*)}]
  \item the components are naturally the inner nodes of the laminar tree in
        \cref{cor:strong-tree};
  \item each component contains at most one flower node;
  \item in a flower component, every petal is either an incident strong
        boundary or satisfies \(\fcl(E\setminus P)=E\), and an
        outward-sequential petal contains no deeper strong boundary;
  \item if a component contains no flower and an ns-side keeps every incident
        boundary core wholly on one side (call such a side
        \emph{canonical-block-saturated}), then its class is one of the
        incident boundary classes.
\end{enumerate}
\end{proposition}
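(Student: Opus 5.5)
We start from a maximal partial \(2\)-tree \(T\) given by \cref{thm:CW} and move each strong class onto an edge, changing \(T\) only within its Clark--Whittle equivalence. Let \(s\) be a strong class. By \cref{thm:CW}, \(s\) is displayed either by an edge or by a flower node \(\Phi=(P_1,\ldots,P_n)\). In the flower case, \(s\) is displayed as a union \(U=\bigcup_{i\in I}P_i\).

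We first show that \(I\) may be taken consecutive with both \(I\) and its complement containing at least two petals. For an anemone, any union may be reordered to be consecutive. For a daisy, the permitted unions are already consecutive. If \(|I|=1\) or \(|E\setminus I|=1\), the class is a petal cut, and the petal is itself a side, so we can pass to the incident tree edge or bag. We then split \(\Phi\) into the two flowers \((P_i:i\in I,\ E\setminus U)\) and \((U,\ P_i:i\notin I)\), joined by a new edge that displays \(s\).

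\Cref{lem:no-loose-concatenation} shows that neither new flower has a loose petal. Strongness of \(s\) is needed to keep the other classes displayed. Every other class displayed at \(\Phi\) is compatible with \(s\), hence does not cross \(U\). It is therefore displayed by a union of petals lying within one of the two new flowers, or by the new edge. This check goes through \cref{lem:pair-compatibility}. A new flower with only two petals, or of \(\mathcal S\)-order below three, degenerates to a bag. The axioms (P1)--(P5) of a partial \((2,\mathcal S)\)-tree and \(\mathcal S\)-maximality persist, because the set of displayed classes only grows. Since each split strictly increases the number of edge-displayed strong classes, iterating terminates. Finally, among edges displaying the same strong class we keep one, contracting the others into bags.

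For (i), map each component \(C\) of \(T\) minus the strong edges to the following data: the strong core on the side facing the anchor, together with the cores of its children across incident strong edges. \Cref{lem:anchor-laminar} and the orientation rule \eqref{eq:anchor-rule} make this a bijection onto the inner nodes of the laminar tree of \cref{cor:strong-tree}. The leaves of that tree are the singletons lying in bags of \(C\). We must check that the core \(E\setminus\fcl(E\setminus X)\) coincides with the side displayed by the edge up to equivalence; this follows from Lemma~3.8 of Clark--Whittle and \cref{lem:nested-equivalent}.

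For (ii), suppose a component contained two flowers \(\Phi,\Psi\). The tree path between them contains an edge whose displayed class is non-sequential; this uses \(\mathcal S\)-order at least three and the no-loose-petal property. That class conforms with every flower by \cref{thm:CW-flower}, so it crosses no class. It is then strong, contradicting the deletion of all strong edges. The real danger is a path edge whose class is displayed only up to equivalence, so this step needs the conformance theorem applied to each maximal flower equivalent to \(\Phi\).

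Part (iii) runs the argument proving \eqref{eq:outward}. Take a petal \(P\) that is not an incident strong boundary. If \(\fcl(E\setminus P)\neq E\), the petal cut is non-sequential, and conformance makes it strong; it would then be an incident strong boundary. An outward-sequential petal cannot contain a deeper strong boundary: such a boundary would lie in a different component, and the connecting edge would then be a strong boundary incident to \(C\) inside \(P\).

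For (iv), take an ns-side \(X\) that is canonical-block-saturated in a flowerless component. By \cref{thm:CW}, \(X\) is equivalent to a class displayed by an edge of \(T\). If that edge lies outside \(C\), saturation forces it to coincide with the boundary edge on that route. If it lies inside \(C\), the class is non-sequential and crosses nothing; here conformance is vacuous because \(C\) has no flower, while flowers elsewhere are separated off by strong edges. So the class is strong and hence a boundary class, which is a contradiction.

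I expect (ii) and (iv) to be the main obstacle. The difficulty is showing that a non-sequential class displayed inside a component is strong, which requires conformance against all flowers, not just one maximal flower.
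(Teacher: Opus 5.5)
Your architecture matches the paper's: split a flower at a strong union, make each strong class label a unique edge, delete those edges, then read off (i)--(iv). However, the step you flag as the main obstacle is a genuine gap, and (ii), the first clause of (iii), and the inside case of (iv) all rest on it. Conformance cannot show that a non-sequential tree-edge class is strong. \cref{thm:CW-flower} allows another class to be represented by a union of petals, and two unions of petals of an anemone or a daisy may cross. So ``conforms with every flower, hence crosses no class'' does not follow; it does follow for a single-petal cut, which is why it works in \cref{lem:narrow-flower}. The missing idea uses the tree, not flowers. An edge cut of the partial tree is nested with every separation the tree displays: every other edge cut, and every union of petals at any flower node. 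By \cref{thm:CW}, every non-sequential class has a displayed representative. Hence every non-sequential edge separation is strong, and every undeleted edge is sequential. Then (ii) follows directly. The petal \(P\) at \(v\) toward \(w\) is sequential, and no-looseness gives \(\fcl(E\setminus P)=E\). A non-sequential side \(B\subseteq P\) displayed at \(w\) would then have \(\fcl(E\setminus B)=E\), a contradiction.

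Several other load-bearing steps are asserted rather than proved.
\begin{itemize}
\item In (iv), ``saturation forces it to coincide with the boundary edge'' is the whole content, and the remote display may be at a flower rather than an edge. The paper takes the first incident boundary core \(F\) toward the display and the core \(A\) of the remote class oriented toward \(C\), and proves \(A\subseteq F\). It then orients \(X_0\supseteq A\), so that saturation gives \(F\subseteq X_0\), and excludes the opposite pairing. This yields \(\fcl(A)\subseteq\fcl(F)\subseteq\fcl(X_0)=\fcl(A)\).
\item In (iii), your remark about a connecting edge derives no contradiction. Use instead that \(F\subseteq P\) gives \(E\setminus P\subseteq E\setminus F\), hence \(\fcl(E\setminus F)=E\).
\item For (i), equivalence of a core with its displayed side does not give the bijection. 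You need, for distinct selected classes, \(s\preceq t\Rightarrow X_s\subseteq X_t\) and \(s\preceq t^*\Rightarrow X_s\cap X_t=\varnothing\). These follow from noncrossing of tree edges and monotonicity of \(\fcl\).
\item Your description of the leaves in (i) is wrong. Fringe vertices in \(X_s\setminus F_s\) lie on the child side of the edge but are not in the core \(F_s\), so the core tree attaches them higher up.
\item In the flower split, compatibility of \(\mathfrak c\) with \(\mathfrak s\) does not make the union displaying \(\mathfrak c\) noncrossing with \(U\). An input such as Clark--Whittle Lemma~4.8 is needed to get an equivalent representative still displayed by \(\Phi\) and contained in \(U\) or in \(E\setminus U\).
\item Contracting duplicate edges of one strong class requires showing that no flower node lies on the path between them, and that every intermediate edge displays the same class.
\end{itemize}
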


\begin{proof}
We first prove a local replacement claim.  Let \(v\) be a flower vertex with flower
\(\Phi=(P_1,\ldots,P_n)\), and suppose that a strong class
\(\mathfrak s\), not already displayed by an edge, is represented at \(v\) by
the displayed union
\[
 U=\bigcup_{i\in I}P_i.
\]
Both \(I\) and its complement contain at least two petals, since otherwise
the corresponding incident edge already displays \(\mathfrak s\).  In an
anemone reorder the petals so that \(I\) is consecutive; in a daisy it is
consecutive because \(U\) is displayed.

Replace \(v\) by two adjacent vertices \(v_U,v_{\bar U}\).  Attach the
\(U\)-branches to \(v_U\), the remaining branches to \(v_{\bar U}\), and let
the new edge display \(U\mid E\setminus U\).  The partitions at the two new
vertices are
\[
 \Phi_U=(P_i:i\in I,\ E\setminus U),
 \qquad
 \Phi_{\bar U}=(U,\ P_i:i\notin I),
\tag{3.8}\label{eq:flower-split}
\]
in the inherited cyclic orders.  They are concatenations of \(\Phi\), and
hence are \(2\)-flowers.  Each new centre is labelled as an anemone or a
daisy according to the actual type of its displayed flower.

By \cref{lem:no-loose-concatenation}, both \(\Phi_U\) and
\(\Phi_{\bar U}\) have no loose petals.

We next check that no non-sequential class is lost.  Let
\(\mathfrak c\neq\mathfrak s\) have a representative displayed by \(\Phi\).
Since \(\mathfrak s\) is strong, \cref{lem:pair-compatibility} gives an
oriented core representative of \(\mathfrak c\) contained either in \(U\) or
in \(E\setminus U\).  Indeed, if a literal of \(\mathfrak c\) lies below the
\(U\)-literal, its core lies in the \(U\)-core and hence in \(U\).  If the
\(U\)-literal lies below that literal, reverse both orientations; then the
opposite core of \(\mathfrak c\) lies in the opposite \(U\)-core and hence in
\(E\setminus U\).  The two facing cases are identical.  Clark--Whittle
Lemma~4.8 applies with the displayed cuts for \(\mathfrak c\) and
\(\mathfrak s\), and with that core as its equivalent contained
representative.  Its hypotheses hold because the two classes are
inequivalent and the flower has \(\mathcal S\)-order at least three.  It
therefore gives an equivalent representative of \(\mathfrak c\), still
displayed by \(\Phi\), whose corresponding side is contained in that same
set.  A union of old petals contained in \(U\) is displayed by \(\Phi_U\),
and one contained in \(E\setminus U\) is displayed by
\(\Phi_{\bar U}\).  The class \(\mathfrak s\) is displayed by the new
central edge, and displays away from \(v\) are unchanged.  Since the
original maximal partial tree displays an equivalent representative of every
non-sequential class by \cref{thm:CW}, the modified tree retains this stronger
display property for every class.

A new flower has \(\mathcal S\)-order at least two, since it displays
\(\mathfrak s\).  If its order is at least three, it is a legitimate
partial-tree flower vertex.  If its order is two, every non-sequential
separation it displays is equivalent to \(\mathfrak s\).  Replace its centre
by an empty bag vertex, which is permitted in a partial
\((2,\mathcal S)\)-tree.  Whenever this creates a sequential edge between two
bag vertices, contract that edge and merge the two bag labels.  Repeat until
every remaining bag--bag edge is non-sequential.  Edges incident with a
flower vertex still satisfy the partial-tree edge axiom.  Every newly created
bag--bag edge incident with a suppressed centre is either sequential and
contracted, or is non-sequential and represents \(\mathfrak s\);
pre-existing bag--bag edges already satisfy that axiom.  The only
non-sequential class displayed at the suppressed order-two flower was
\(\mathfrak s\), retained by the new central edge, while all attached
subtrees are unchanged.

For completeness, the partial-tree axioms can now be checked directly.
Before an order-two suppression, every new edge incident with a flower
displays a petal cut of a flower and hence a \(\mathcal T_0\)-strong
\(2\)-separation, while the central edge displays the non-sequential class
\(\mathfrak s\); all other edges are unchanged.  This proves (P1), and the
chosen anemone or daisy labels give (P2).  The surviving new flowers have
\(\mathcal S\)-order at least three, have the type recorded by their labels,
and have no
loose petal, proving (P3) and (P4).  On suppressing an order-two flower, its
incident edge separations are unchanged; contracting precisely the sequential
bag--bag edges leaves (P1)--(P4) valid.  Finally, the retained display property
implies (P5), since every non-sequential class has an equivalent separation
displayed by the modified tree.  Thus suppression and contraction preserve
all partial-tree axioms.

The resulting tree displays exactly the same \(\mathcal S\)-equivalence
classes as the original tree.  It is therefore equivalent to it and remains
\(\mathcal S\)-maximal.  The replacement adds an edge displaying
\(\mathfrak s\).  Previously produced strong edges are merely
reattached to one of the two new vertices; none is removed.  Since after every
replacement the tree still displays a representative of every
non-sequential class, the operation can be iterated until every strong class
is displayed by an edge.

We may moreover arrange that every strong class labels exactly one edge.
Suppose two distinct edges display the same non-sequential class.  Choose the
orientations of their displayed sides, call them \(X\) and \(Y\), having the
same first full closure \(A\), and call the common opposite closure \(B\).
Both sides contain the common core \(E\setminus B\), so they meet.  Since edge separations do not
cross, they are nested unless their union is \(E\).  The latter would give
\(B=\fcl(E\setminus Y)\subseteq\fcl(X)=A\); since
\(X\subseteq A\) and \(E\setminus X\subseteq B\), this forces \(A=E\).
Thus they can
be labelled and oriented as
\[
 X\subseteq Y,
 \qquad
 \fcl(X)=\fcl(Y)=A.
\tag{3.9}\label{eq:duplicate-sides}
\]
Every edge on the path between the two given edges
displays a set \(Z\) with \(X\subseteq Z\subseteq Y\).  If its separation is
non-sequential, monotonicity gives
\[
 A=\fcl(X)\subseteq\fcl(Z)\subseteq\fcl(Y)=A,
\]
so that edge displays the same class.

There is no flower vertex on this path.  Indeed, let \(w\) be such a flower
and let \(L,R\) be the two petals containing the two path directions.  A
displayed non-sequential separation which separates \(L\) from \(R\) has an
orientation \(Z\) between \(X\) and \(Y\), and hence has the same class.  If a
displayed side \(C\) does not separate \(L\) from \(R\), orient it so that
\(C\subseteq Y\setminus X\).  Then \(X\subseteq E\setminus C\), and therefore
\[
 A=\fcl(X)\subseteq\fcl(E\setminus C).
\]
Also \(E\setminus Y\subseteq E\setminus C\), while \(Y\subseteq A\).  Thus
\[
 E=A\cup(E\setminus Y)\subseteq\fcl(E\setminus C),
\]
so \(C\mid E\setminus C\) is sequential.  Consequently every
non-sequential class displayed at \(w\) would be the same class.  This
contradicts the fact that a flower vertex of \(\mathcal S\)-order at least
three displays at least two inequivalent non-sequential separations.

Hence the path consists only of bag vertices.  By the partial-tree edge
axiom, all its edges are non-sequential, and the preceding argument shows that
they all display the same class.  Contract all but one, merging bag labels.
This preserves every partial-tree axiom and every displayed equivalence
class.  Repeating for every strong class yields an equivalent maximal partial
tree in which strong classes and strong edges are in bijection.

Orient the unique edge of each strong class according to the anchor rule, and
let \(X_{\mathfrak s}\) be its displayed side.  Let
\(F_{\mathfrak s}\) be the selected canonical core.  The two laminar families
\[
 \{X_{\mathfrak s}\}
 \quad\text{and}\quad
 \{F_{\mathfrak s}\}
\tag{3.10}\label{eq:raw-core-families}
\]
have the same nesting and disjointness relations.  To see this, suppose the
selected orientations satisfy \(s\preceq t\).  Both corresponding raw edge
sides contain the nonempty core \(F_s\), so they meet.  Since tree-edge
separations do not cross, either \(X_s\subseteq X_t\),
\(X_t\subseteq X_s\), or \(X_s\cup X_t=E\).  In the second case, monotonicity
in both orientations, together with \(s\preceq t\), makes both members of the
two closure pairs equal, so the classes are equal.  In the third case,
\[
 \fcl(E\setminus X_t)\subseteq\fcl(X_s)\subseteq\fcl(X_t),
\]
which forces \(\fcl(X_t)=E\), contrary to non-sequentiality.  Thus, for
distinct classes,
\[
 s\preceq t\quad\Longrightarrow\quad X_s\subseteq X_t.
\tag{3.11}\label{eq:raw-order}
\]
Applying the same argument to \(s\preceq t^*\) gives
\(X_s\subseteq E\setminus X_t\), hence \(X_s\cap X_t=\varnothing\).  These
are exactly the relations satisfied by the corresponding cores.

Delete the unique strong edges and contract each remaining component to one
vertex.  If there is no strong class, this leaves one component, matching the
root-only inner skeleton of the set system consisting of \(E\) and the
singletons, so (i) is immediate.  Otherwise root the quotient at the component
outside the selected anchor side.  The resulting edge-labelled tree is the
Hasse tree of the selected raw sides.  By
\eqref{eq:raw-core-families}--\eqref{eq:raw-order}, that hierarchy is
canonically isomorphic to the inner-node tree of the selected core family.
Thus the quotient is canonically isomorphic to the inner-node skeleton of the
canonical-core tree.  Original-vertex leaves are attached separately according
to canonical-core membership.  This proves (i) without claiming that one
global partial tree literally displays every selected core.

Every non-sequential edge separation in a maximal partial tree is strong:
it is nested with every separation displayed by the tree, and
\cref{thm:CW} displays a representative of every non-sequential class.
Now suppose two flower nodes \(v,w\) survive in one component.  Let \(P\) be
the petal at \(v\) pointing toward \(w\).  Its edge is not a deleted
non-sequential edge, so its separation is sequential.  No-looseness gives
\(\fcl(P)\neq E\), hence \(\fcl(E\setminus P)=E\).  At \(w\), choose a
displayed non-sequential side \(B\) not using the petal toward \(v\).  Then
\(B\subseteq P\), so
\(E\setminus P\subseteq E\setminus B\).  Monotonicity would give
\(\fcl(E\setminus B)=E\), contradicting that \(B\) is non-sequential.
This proves (ii).

For (iii), a petal cut which is non-sequential is a strong tree-edge class
and hence becomes a boundary.  Every other petal cut is sequential; again
no-looseness rules out closure on the petal side, proving
\(\fcl(E\setminus P)=E\).  If a strong boundary core \(F\) lay strictly
inside such a petal, then
\(E\setminus P\subseteq E\setminus F\), contradicting
\(\fcl(E\setminus F)\neq E\).

For (iv), let \(X\mid E\setminus X\) be a canonical-block-saturated non-sequential
separation.  By \cref{thm:CW}, choose an equivalent displayed separation.
If its display is outside the component, let \(F\) be the first incident
boundary core toward that display.  Orient the remote class toward the
component boundary and let \(A\) be its canonical core.  The closure-pair
order gives \(A\subseteq F\).  Orient \(X\) as \(X_0\) containing \(A\);
canonical block-saturation then gives
\(F\subseteq X_0\).
Equivalence forces
\(\fcl(A)=\fcl(X_0)\): the opposite pairing, together with
\(A\subseteq X_0\), would put both \(X_0\) and its complement in
\(\fcl(X_0)\).  Therefore
\[
 \fcl(A)\subseteq\fcl(F)\subseteq\fcl(X_0)=\fcl(A),
\]
so the class is the boundary class of \(F\).  It cannot be displayed inside
the component, which has neither a flower nor an undeleted non-sequential
edge.  This proves (iv).
\end{proof}

The distinction in the last paragraph is important: Clark--Whittle display
equivalence does not say that all selected cores occur simultaneously as
literal edge sides.  We neither need nor use that assertion.  Instead we use
the partial tree only at class level and form the following algebraic local
model directly from the closure pairs.

\begin{remark}[Logical role of the auxiliary tree]\label{rem:auxiliary-tree}
The class-normal partial tree is an existence witness used only in
\cref{lem:canonical-component-blocks,lem:flower-torso,prop:bag-layout} to
prove a uniform bound on local linear rank-width.  The transduction in
\cref{cor:strong-tree} outputs only the canonical core-inclusion tree, and
the interpretation in \cref{lem:local-expansion} is defined directly from
that tree, the ambient adjacency relation, the coherent frame selectors,
and the child-state colours.  It never tests whether an auxiliary component
contains a flower and never selects a linear layout.  This is sufficient:
the theorem of Boja\'nczyk--Grohe--Pilipczuk supplies one CMSO formula valid
on the entire bounded-linear-clique-width class, so the existence of a
bounded-width layout is a semantic hypothesis and need not itself be
CMSO-definable.
\end{remark}

\begin{lemma}[Canonical component blocks]
\label{lem:canonical-component-blocks}
Assume \(|E|\geq9\), and fix a component node in the abstract quotient tree
of strong classes.  For
each incident strong class, orient it toward the branch away from the node and
let \(F_i\) be the core of that orientation.  Then the \(F_i\) are pairwise
disjoint ns-sides.  With
\[
 K=E\setminus\bigcup_iF_i,
\]
the class-normal partial-tree component determines whether this canonical
component is flower-free or has one flower.  In the latter case an incident
strong direction is represented by a flower petal \(P_i\) satisfying
\[
 F_i\subseteq P_i\subseteq\fcl(F_i),
\tag{3.12}\label{eq:petal-fringe}
\]
while every other petal is outward-sequential and contains no canonical
boundary core.
\end{lemma}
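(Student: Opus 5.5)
The proof will combine the anchor-core laminarity of \cref{lem:anchor-laminar} with the class-normal form of \cref{prop:strong-normal}.

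\emph{Disjointness.} Fix the component node \(c\) of the quotient tree. Its incident strong classes are the parent edge, whose outward side is the complement of the node's own selected raw side, and the child edges. Orient each away from \(c\) and take the core \(F_i=E\setminus\fcl(E\setminus X_i)\), where \(X_i\) is the outward side. Each \(F_i\) is an ns-side equivalent to the class by the closure calculus recalled after \eqref{eq:core}. For two incident classes \(s,t\), strongness together with \cref{lem:pair-compatibility} makes some orientations comparable. The outward orientations cannot be nested, since then one class would lie strictly between \(c\) and the other, and the quotient tree is the Hasse tree of \eqref{eq:raw-order}. So they face each other: \(s\preceq t^*\). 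The argument from \cref{lem:anchor-laminar} then gives \(F_s\subseteq E\setminus F\subseteq E\setminus F_t\), so the cores are disjoint. The strong edges are exactly the incident ones, so the relevant orientations are pairwise outward, and the facing case is forced by the raw-side relations \eqref{eq:raw-order}.

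\emph{Flower dichotomy.} By \cref{prop:strong-normal}(i)--(ii), the component of the class-normal tree corresponding to \(c\) contains at most one flower node. This alone determines whether the canonical component is flower-free or has exactly one flower.

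\emph{The flower case.} This is where I expect the real work. Suppose the flower is \(\Phi=(P_1,\ldots,P_m)\) and the component's deleted strong edges hang off it, either directly at petals or through bag vertices inside a petal's branch. By \cref{prop:strong-normal}(iii), each petal is either an incident strong boundary or outward-sequential. An outward-sequential petal contains no deeper strong boundary, and the argument there shows it contains no canonical boundary core \(F_j\). Otherwise \(E\setminus P\subseteq E\setminus F_j\) would force \(\fcl(E\setminus F_j)=E\), contradicting that \(F_j\) is an ns-side.

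So every incident strong direction \(i\) lies inside a petal \(P\) whose cut is non-sequential, hence strong, and hence itself an incident boundary. By the uniqueness of the strong edge per class, that boundary is exactly class \(i\). Thus \(P=P_i\) is the edge side \(X_i\) for that class.

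\emph{The sandwich \eqref{eq:petal-fringe}.} Since \(P_i\) represents the class with outward orientation, \(\fcl(P_i)=\fcl(F_i)\), which gives \(P_i\subseteq\fcl(F_i)\). Also \(\fcl(E\setminus P_i)=\fcl(E\setminus F_i)\), so \(F_i=E\setminus\fcl(E\setminus P_i)\subseteq P_i\).

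The main obstacle is justifying that the displayed petal and the selected core have the same closure orientation rather than swapped closures. I would handle this as in (iv) of \cref{prop:strong-normal}. The swapped pairing, together with \(F_i\subseteq P_i\) (or \(F_i\) meeting \(P_i\)), would place both \(P_i\) and its complement in one closure, which contradicts non-sequentiality.
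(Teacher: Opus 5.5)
Your proposal is correct and follows essentially the paper's route: disjointness of the \(F_i\) from the facing comparison as in \cref{lem:anchor-laminar}, the flower dichotomy and petal structure from \cref{prop:strong-normal}(i)--(iii), and \eqref{eq:petal-fringe} from the core identity together with the equivalence of \(P_i\) and \(F_i\). One small correction: the petal containing direction \(i\) is identified with class \(i\) because outward sides of distinct incident strong edges are disjoint (no component node lies beyond a deleted petal edge), not because each strong class labels a unique edge.
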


\begin{proof}
For two different incident directions, the corresponding oriented closure
pairs face each other in the compatibility order.  The facing case in the
proof of \cref{lem:anchor-laminar} shows that their cores are disjoint.  Each
core is an equivalent ns-side by the closure calculus.

The quotient of the class-normal partial tree and the canonical-core
inclusion tree are the same abstract tree by
\cref{prop:strong-normal}(i).  Thus the former supplies the bag/one-flower
label at the node without requiring literal equality of edge sides.  By
\cref{prop:strong-normal}(iii), a flower direction containing an incident
strong boundary cannot be outward-sequential; its petal cut therefore
represents that boundary class.  For the correctly oriented petal
representative \(P_i\), the core identity gives \(F_i\subseteq P_i\), while
equivalence gives \(\fcl(P_i)=\fcl(F_i)\).  This proves
\eqref{eq:petal-fringe}.  The remaining petal assertions are
\cref{prop:strong-normal}(iii).
\end{proof}

\section{Corrected torsos and bounded local linear width}

We now assume \(|E|\geq9\); smaller prime graphs are retained as bounded
base cases in \cref{prop:prime-definability}.  Fix one component of the
abstract strong-class tree and apply
\cref{lem:canonical-component-blocks}.  Denote its pairwise disjoint,
outward-oriented canonical boundary cores by
\(F_1,\ldots,F_m\), and put
\[
 K=E\setminus(F_1\cup\cdots\cup F_m).
\tag{4.1}\label{eq:component-partition}
\]
All boundary fringes remain in \(K\).

For each \(i\), the cut \(F_i\mid E\setminus F_i\) has rank two.  Choose an
ordered pair \(R_i=(r_i^1,r_i^2)\subseteq F_i\) whose two rows form a basis of
\(A_G[F_i,E\setminus F_i]\).  Let
\(\Lambda=\F_2^2\setminus\{0\}\).
By the definition of a core, \(E\setminus F_i\) is one of the two fully
closed members of its closure pair.

\begin{construction}[Corrected simultaneous torso]\label{con:torso}
Replace \(F_i\) by a terminal triple
\(T_i=\{t_i^a:a\in\Lambda\}\).  The vertex set of the torso \(H\) is
\[
 V(H)=K\ \dot\cup\ T_1\ \dot\cup\cdots\dot\cup\ T_m.
\]
Keep the graph induced by \(K\).  For \(x\in K\) and distinct \(i,j\), put
\begin{align}
 A_H(t_i^a,x)
   &=\sum_{p=1}^2 a_p A_G(r_i^p,x),
     \tag{4.2}\label{eq:core-terminal}\\
 A_H(t_i^a,t_j^b)
   &=\sum_{p,q=1}^2 a_pb_q A_G(r_i^p,r_j^q).
     \tag{4.3}\label{eq:terminal-terminal}
\end{align}
Make every \(T_i\) a triangle \(K_3\).
\end{construction}

When the same selected ordered basis is used for each occurrence of a
boundary core, the rooted local interface graph at a canonical-tree node is
obtained from its canonical component torso by deleting the parent terminal
triple and the fringe vertices outside the rooted territory.  Indeed,
replacing a canonical child core removes precisely that descendant territory,
while all fringe elements remain in \(K\); in the parent direction one uses
the core of the opposite orientation.  Terminal--terminal and terminal--\(K\)
adjacencies depend only on the ambient adjacency matrix and
the selected boundary bases.  The rooted local graph is therefore an induced
subgraph of the torso, and its linear rank-width cannot increase under these
vertex deletions.

The terminal--terminal term \eqref{eq:terminal-terminal} is indispensable:
different removed blocks can have nonzero adjacency to each other.

\begin{lemma}[Simultaneous block identity]\label{lem:block-identity}
For \(X\subseteq K\) and \(I\subseteq[m]\), let
\[
 Z=X\cup\bigcup_{i\in I}T_i,
 \qquad
 \widehat Z=X\cup\bigcup_{i\in I}F_i.
\]
Then
\[
 \rho_H(Z)=\rho_G(\widehat Z).
\tag{4.4}\label{eq:block-identity}
\]
\end{lemma}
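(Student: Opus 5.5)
The plan is to reduce both sides of \eqref{eq:block-identity} to the rank of one small matrix over \(\F_2\), using only that the selected rows \(R_i\) span the cut matrix of \(F_i\). Put \(J=[m]\setminus I\), so that \(V(G)\setminus\widehat Z=(K\setminus X)\cup\bigcup_{j\in J}F_j\) and \(V(H)\setminus Z=(K\setminus X)\cup\bigcup_{j\in J}T_j\). Let \(M\) be the matrix of \(A_G\) with rows indexed by \(X\cup\bigcup_{i\in I}R_i\) and columns indexed by \((K\setminus X)\cup\bigcup_{j\in J}R_j\). I will show \(\rho_G(\widehat Z)=\operatorname{rank}M\) and \(\rho_H(Z)=\operatorname{rank}M\).

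\emph{Step 1, the graph side, rows.} For \(i\in I\), the rows of \(A_G[F_i,E\setminus F_i]\) lie in the span of the two rows indexed by \(r_i^1,r_i^2\). The columns of the cut matrix \(A_G[\widehat Z,V(G)\setminus\widehat Z]\) all lie in \(E\setminus F_i\). Restricting to these columns therefore shows that every row from \(F_i\) is an \(\F_2\)-combination of the rows \(r_i^1,r_i^2\). Deleting the rows of \(F_i\setminus R_i\) thus leaves the rank unchanged.

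\emph{Step 1, the graph side, columns.} For \(j\in J\), apply symmetry of \(A_G\) to the same spanning fact: the columns of \(A_G[E\setminus F_j,F_j]\) are spanned by the columns \(r_j^1,r_j^2\). All remaining rows lie in \(\widehat Z\subseteq E\setminus F_j\), so the columns of \(F_j\setminus R_j\) may also be deleted. What remains is exactly \(M\).

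\emph{Step 2, the torso side.} By \eqref{eq:core-terminal} and \eqref{eq:terminal-terminal}, the entries of \(A_H[Z,V(H)\setminus Z]\) are bilinear in the terminal labels. The row of \(t_i^a\) is \(a_1(\text{row }r_i^1)+a_2(\text{row }r_i^2)\), and the column of \(t_j^b\) is \(b_1(\text{column }r_j^1)+b_2(\text{column }r_j^2)\), both computed in \(M\). Rows and columns indexed by \(K\) coincide with those of \(G\), because \(H[K]=G[K]\). The triangle edges inside each \(T_i\) never occur in this cut matrix, since every \(T_i\) lies entirely on one side of \(Z\).

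Hence \(A_H[Z,V(H)\setminus Z]=P\,M\,Q^{\mathsf T}\). Here \(P\) is block diagonal with the identity on \(X\) and, for each \(i\in I\), the \(3\times2\) matrix whose rows are the elements of \(\Lambda\); \(Q\) is the analogous matrix for the columns. Since \(\Lambda\) contains both unit vectors, each \(3\times2\) block has rank two. So \(P\) and \(Q\) have full column rank, and \(\operatorname{rank}(PMQ^{\mathsf T})=\operatorname{rank}M\).

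The only delicate point is the bookkeeping in Step 1 for the terminal--terminal columns. One must apply the row reduction for \(F_i\) and the column reduction for \(F_j\) simultaneously, checking that each is legitimate on the submatrix that remains. Both reductions only require the spanning property over index sets disjoint from the block being reduced, and \(F_i\cap F_j=\varnothing\) by \cref{lem:canonical-component-blocks}, so I expect this to go through. It is precisely here that the term \eqref{eq:terminal-terminal} is needed to reproduce the \(R_i\times R_j\) entries of \(M\).
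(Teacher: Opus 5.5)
Your proof is correct and follows essentially the same route as the paper. Both reduce the ambient cut matrix to the basis rows \(R_i\) (\(i\in I\)) and basis columns \(R_j\) (\(j\notin I\)), and then observe that the torso cut matrix coincides with this reduced matrix apart from the redundant \(t_i^{(1,1)}\) rows and columns; your factorization \(PMQ^{\mathsf T}\) with full-column-rank \(P,Q\) is a tidier packaging of the paper's step of deleting those dependent rows and columns.
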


\begin{proof}
In the cut matrix of \(\widehat Z\), replace the rows belonging to every
selected block \(F_i\) by the basis rows \(r_i^1,r_i^2\).  This preserves row
rank because all cut columns lie outside \(F_i\).  By symmetry, replace the
columns belonging to every unselected block \(F_j\) by the two basis columns
indexed by \(r_j^1,r_j^2\).  The resulting matrix has core entries from
\(G[K]\), core--block entries given by \eqref{eq:core-terminal}, and
block--block entries given by the bilinear matrix
\eqref{eq:terminal-terminal}.

In the cut matrix of \(Z\), the row or column indexed by
\(t_i^{(1,1)}\) is the sum of those indexed by
\(t_i^{(1,0)}\) and \(t_i^{(0,1)}\).  Delete all such dependent third rows and
columns.  What remains is exactly the reduced ambient cut matrix.  The
internal triangle edges never cross a block-saturated cut, proving
\eqref{eq:block-identity}.
\end{proof}

We first handle a canonical component containing a flower.

\begin{lemma}[Flower torso layout]\label{lem:flower-torso}
If the canonical component contains a flower, its torso has a linear layout
of cut-rank at most four in which every terminal triple is consecutive.
\end{lemma}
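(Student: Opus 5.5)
The plan is to imitate the petal-concatenation argument of \cref{lem:narrow-flower}, this time inside the torso \(H\), and to transfer cut-rank values back to \(G\) through \cref{lem:block-identity}. By \cref{lem:canonical-component-blocks}, the flower of the component has petals \(P_1,\ldots,P_n\) in its cyclic order. Each incident strong direction is represented by a petal \(P_i\) with \(F_i\subseteq P_i\subseteq\fcl(F_i)\). Every other petal is outward-sequential and contains no boundary core. In the torso, replace the \(G\)-petal \(P_i\) by
\[
 Q_i=(P_i\cap K)\cup\bigcup_{F_j\subseteq P_i}T_j .
\]
For a boundary petal this is the fringe \(P_i\setminus F_i\) together with \(T_i\). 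For an outward-sequential petal it is \(P_i\) itself. The sets \(Q_i\) partition \(V(H)\), and the hat-map of \cref{lem:block-identity} sends any union of \(Q_i\)'s to the corresponding union of \(P_i\)'s. In particular a consecutive union of torso petals has \(\rho_H\) at most two, because the flower axioms give this bound for the corresponding union of \(G\)-petals.

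The layout orders the \(Q_i\) in the cyclic flower order and fills in each block separately.

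For an outward-sequential petal, \(\fcl(E\setminus P_i)=E\). By \cref{lem:reversal}, \(P_i\) has an ordering whose prefixes \(Z\) all satisfy \(\rho_G(Z)\le2\). These prefixes lie in \(K\), so \cref{lem:block-identity} gives \(\rho_H(Z)=\rho_G(Z)\le 2\).

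For a boundary petal, place \(T_i\) first and then the fringe \(P_i\setminus F_i\). The fringe is ordered by \cref{lem:nested-equivalent} applied to \(R=F_i\subseteq S=P_i\), which are nested equivalent ns-sides by \eqref{eq:petal-fringe}. Every intermediate set \(F_i\cup\{s_1,\ldots,s_j\}\) then has \(\rho_G\) value two, so by the identity the prefixes \(T_i\cup\{s_1,\ldots,s_j\}\) have \(\rho_H\) value two. The prefixes inside \(T_i\) itself, namely one or two terminals, have \(\rho_H\le2\), because the rank of a two-row cut matrix is at most two. With this choice every terminal triple is consecutive.

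A general prefix of the whole layout has the form \(U\cup Z\). Here \(U\) is a consecutive union of earlier torso petals, so \(\rho_H(U)\le2\). The set \(Z\) is a prefix inside the current block, and we have just shown \(\rho_H(Z)\le 2\). Disjoint subadditivity of \(\rho_H\) (submodularity with \(\rho_H(\varnothing)=0\)) then gives \(\rho_H(U\cup Z)\le4\), exactly as in the proof of \cref{lem:narrow-flower}.

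The step I expect to need the most care is the transfer for the boundary petals. \Cref{lem:block-identity} only applies to block-saturated sets, and the layout is designed so that every prefix is block-saturated: each triple \(T_i\) is placed entirely before any fringe vertex of \(P_i\). The partial prefixes of \(T_i\) are the exception, and they are handled by the direct rank-two bound instead. I must also check that the hypotheses of \cref{lem:nested-equivalent} hold for \(R=F_i\subseteq S=P_i\). We need \(P_i\) to be an ns-side equivalent to \(F_i\) in the same orientation. This follows because the petal cut represents the boundary class, because \(\fcl(P_i)=\fcl(F_i)\), and because the orientation argument already used in the proof of \cref{lem:canonical-component-blocks} applies.

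The parent direction needs a separate check. There the core is of the opposite orientation, but it is still outward from the node, so the same argument applies unchanged. The same inclusion \(F_i\subseteq P_i\subseteq \fcl(F_i)\) holds for that petal by \cref{lem:canonical-component-blocks}.
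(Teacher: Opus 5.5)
Your proposal is correct and follows essentially the same route as the paper's proof. You order the torso petals in cyclic flower order, place each terminal triple \(T_i\) first followed by the fringe sequence from \cref{lem:nested-equivalent}, and order outward-sequential petals via \cref{lem:reversal}; you transfer values through \cref{lem:block-identity}, bound the partial triples directly, and get width four by disjoint subadditivity.
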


\begin{proof}
Consider first an incident strong petal \(P_i\).  By
\eqref{eq:petal-fringe}, its canonical core \(F_i\) satisfies
\(F_i\subseteq P_i\subseteq\fcl(F_i)\).  A partial 2-sequence orders
\(P_i\setminus F_i\), by \cref{lem:nested-equivalent}, so that every prefix
added to \(F_i\) remains
2-separating.  In the torso, order the three vertices of \(T_i\) first and
then that fringe sequence.  Prefixes inside \(T_i\) have cut-rank at most
two; after the whole triple is present, \cref{lem:block-identity} transfers
the partial sequence.  Hence the torso petal
\((P_i\setminus F_i)\cup T_i\) has a width-two ordering.

Every other petal \(P\) satisfies \(\fcl(E\setminus P)=E\) and contains no
boundary core.  Its absorption sequence adds only vertices of \(P\), and
every prefix keeps all boundary blocks whole.  It therefore transfers
through \cref{lem:block-identity}; by \cref{lem:reversal}, the corresponding
torso petal has a width-two ordering.

Replacing \(F_i\) by \(T_i\) inside its petal preserves the cut-rank of each
union of complete petals by \cref{lem:block-identity}; thus the modified
petals are still a flower.

Concatenate petal orderings in flower order.  A prefix inside a petal is the
union of a consecutive set of completed petals, of cut-rank at most two, and
a width-two prefix of the current petal.  Disjoint subadditivity gives rank at
most four.  Each terminal triple is the initial interval of its modified
petal ordering and hence is consecutive.
\end{proof}

It remains to treat flower-free components, for which we compress the
boundary blocks to atoms.  Suppose now that the component contains no flower.
Consider a partition
whose parts are some already-compressed boundary blocks, as single atoms,
and all remaining vertices as singleton atoms.  For a set \(\mathcal X\) of
atoms define
\[
 \mu(\mathcal X)=\rho_G\!\left(\bigcup_{A\in\mathcal X}A\right).
\tag{4.5}\label{eq:atom-connectivity}
\]

\begin{lemma}[One-at-a-time atom compression]\label{lem:atom-compression}
The boundary blocks can be compressed successively while preserving an atom
branch decomposition of width at most two, except that the process may stop
with a torso on at most twelve vertices.
\end{lemma}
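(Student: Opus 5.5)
We argue by induction on the number of boundary blocks already compressed. At each stage the atom set \(\mathcal A\) consists of the compressed blocks \(F_1,\ldots,F_j\) and singletons for all other vertices, and \(\mu\) is as in \eqref{eq:atom-connectivity}. For \(j=0\) there is nothing to prove: \(\mu=\rho_G\), and \(\rw(G)\le2\) supplies a branch decomposition of width at most two. The invariant is that \(\mu\) is symmetric and submodular, satisfies \(\mu(\varnothing)=0\), and has singleton values at most two. This last point holds because \(\mu(\{F_i\})=\rho_G(F_i)=2\); it is exactly the setting of \cref{lem:sink}.

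For the inductive step, let \(D\) be a width-two atom branch decomposition for the current partition, and let \(F=F_{j+1}\). We want a width-two decomposition in which the atoms of \(F\), all singletons at this stage, occupy a single leaf. The plan is the usual uncrossing of tree edges against the ns-side \(F\). For each edge of \(D\) with displayed side \(Y\), oriented so that \(Y\) is the side meeting \(F\) in fewer elements, replace \(Y\) by \(Y\setminus F\). Submodularity gives
\[
\mu(Y\setminus F)+\mu(Y\cup F)\le\mu(Y)+\mu(E\setminus F)\le 4 .
\]
We then need \(\mu(Y\cup F)\ge2\). This follows from \eqref{eq:prime-connectivity} whenever both \(Y\cup F\) and its complement have at least two elements, so \(\mu(Y\setminus F)\le2\). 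Since \(F\) is disjoint from the earlier \(F_i\), these differences are unions of whole atoms.

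To carry this out on the tree, we choose the node or edge of \(D\) that best approximates \(F\): the edge whose displayed side \(Y\) minimizes \(|Y\triangle F|\), or equivalently the sink of an orientation of edges toward the side containing most of \(F\). We then prune the \(F\)-atoms out of the branches on the far side and hang a single new leaf for \(F\) at that point. Every displayed cut of the new tree is one of \(Y\setminus F\), \(Y\cup F\), or \(F\) itself. The set \(F\) is an ns-side and so has value two. The sets \(Y\cup F\) are handled by the symmetric inequality with the roles of \(Y\) and \(E\setminus Y\) exchanged.

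I expect the main obstacle to be the degenerate cases, where the complement of \(Y\cup F\), or \(Y\setminus F\), has at most one element. There the lower bound from \eqref{eq:prime-connectivity} is unavailable. It can also happen that uncrossing collapses the whole decomposition, because few atoms remain outside the compressed blocks.

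My first attempt at these cases is the following. Singleton cuts have \(\mu\le2\) automatically, since every atom value is at most two, so a cut with a side of at most one atom costs nothing. The only genuinely bad configurations are therefore those in which \(|K|\) together with the number of remaining blocks is so small that no valid uncrossing exists. In that case the current torso, with each block replaced by its terminal triple as in \cref{con:torso}, has at most a bounded number of vertices. Counting four ns-side elements per block by \cref{lem:small-sides}, against three terminals per block and at most a few leftover singletons, should give the bound of twelve torso vertices. At that point we simply stop, as the statement allows.
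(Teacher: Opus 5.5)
\paragraph{A different route, but the key inequality is wrong.} Your uncrossing argument is not the paper's route: the paper applies the Hall--Oxley--Semple--Whittle display theorem to the shifted function \(\mu+1\). As written, however, your central step is wrong. Submodularity for \(Y\) and \(E\setminus F\), together with symmetry, gives \(\mu(Y\setminus F)+\mu(F\setminus Y)\le\mu(Y)+\mu(F)\le4\). It does not give \(\mu(Y\setminus F)+\mu(Y\cup F)\le\mu(Y)+\mu(E\setminus F)\). The latter is not an instance of submodularity: for \(F\subseteq Y\) it would assert \(\mu(Y\setminus F)\le\mu(F)\). So the lower bound you actually need is \(\mu(F\setminus Y)\ge2\), that is, at least two elements of \(F\) on the side where the new leaf is hung. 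The degenerate cases you isolate, where the complement of \(Y\cup F\) is small, are therefore not the relevant ones.

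Your fallback for those cases is also not a proof. The remark that counting terminals ``should give'' twelve vertices is exactly what would need to be established. The two placement rules you offer are not equivalent either: an edge minimizing \(|Y\triangle F|\) may have \(F\subseteq Y\), which leaves \(\mu(Y\setminus F)\) uncontrolled. Finally, hanging \(F\) at the sink node creates a degree-four vertex, and its refinement introduces a cut \(B\cup F\) that you never bound.

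\paragraph{How to repair it, and how it compares with the paper.} The idea is sound, and once corrected it proves a stronger lemma than the paper's.
\begin{enumerate}[label=\textup{(\arabic*)}]
  \item Orient each edge of \(D\) toward a side containing at least half of \(F\).
  \item Since \(|F|\ge4\) by \cref{lem:small-sides}, leaf edges point away from the leaves, so the sink \(v\) is internal. For every edge, the side \(Y\) containing \(v\) satisfies \(|Y\cap F|\ge|F|/2\ge2\). This is your ``fewer elements'' orientation, made consistent with a single attachment point.
  \item By pigeonhole, some branch \(B\) at \(v\) has \(|B\cap F|\ge2\).
  \item Delete the \(F\)-leaves, subdivide the edge from \(v\) toward \(B\) by a new node \(w\), hang the leaf \(F\) at \(w\), and suppress degree-two nodes.
  \item Every old edge now displays \(Y\cup F\), with \(Y\) its \(v\)-side. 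Then \(\mu(Y\cup F)\le\mu(Y)+\mu(F)-\mu(Y\cap F)\le2+2-2\). Here \eqref{eq:prime-connectivity} applies because \(Y\cap F\) has at least two vertices and its complement contains \(E\setminus F\), which has at least four.
  \item The single new cut \(B\cup F\) is bounded the same way, using \(|B\cap F|\ge2\).
\end{enumerate}

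This argument uses only that \(F\) is well-linked at order two, which split-primality supplies. It imposes no size condition on the complement, so neither the display theorem nor the twelve-vertex exception is needed. The paper's route produces a full width-three display of \(F_j\), with its interior decomposed. That is why it must treat complements of at most three atoms separately, even though only the contraction of \(F_j\) to one leaf is used afterwards.
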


\begin{proof}
Initially \(\mu=\rho_G\), so its branch-width is at most two.  At every stage
\(\mu^+=\mu+1\) is 3-connected in the convention of
Hall--Oxley--Semple--Whittle.  Indeed, a split-prime graph is connected, so a
nonempty proper atom union has ambient cut-rank at least one, while a
bipartition with at least two atoms on each side lifts to a partition with at
least two original vertices on each side and hence has ambient cut-rank at
least two by split-primality.  Thus \(\mu^+(\varnothing)=1\), every nonempty
proper atom union has value at least two, and every atom bipartition with at
least two atoms on each side has value at least three, exactly matching their
definition of 3-connectivity.

Let \(F_j\) be an uncompressed block.  It consists of at least four singleton
atoms by \cref{lem:small-sides}.  If its atom-complement also has at least
four atoms, then \(\mu^+\) has branch-width exactly three: the inherited
decomposition gives the upper bound, while every cubic decomposition on at
least eight atoms has an edge with at least two atoms on each side, whose
value is at least three by the preceding 3-connectivity.  Moreover,
\(\mu^+(F_j)=\rho_G(F_j)+1=3\).  Theorem~4.1 of
Hall--Oxley--Semple--Whittle applies to arbitrary connectivity functions and
states that a 3-separating set in a 3-connected function of branch-width
three can fail to be displayed only if it or its complement has two or three
atoms, all having singleton value two~\cite{HOSW02}.  Since both sides here
have at least four atoms, the contrapositive gives a width-three branch
decomposition displaying \(F_j\).  Contract the
displayed \(F_j\)-branch to one atom leaf.  Every surviving displayed cut is
an old union-of-atoms cut, so the unshifted width remains at most two.

If the complement of \(F_j\) has at most three atoms, no other uncompressed
boundary block remains, since such a block alone has at least four vertices.
Write the complement as \(r\) previously compressed block-atoms and \(s\)
interior singleton atoms.  Then \(r+s\leq3\), and after replacing all block
atoms, including \(F_j\), by triples, the torso has
\[
 3(r+1)+s\leq12
\]
vertices.  This is the exceptional bounded case.
\end{proof}

Assume first that every block was compressed, and let \(D\) be the resulting
width-two branch decomposition of the final atom system.  A cut of \(D\) is
\emph{admissible}: every \(F_i\) lies wholly on one side.

\begin{lemma}[Location of an admissible ns cut]\label{lem:admissible-location}
If the ambient lift of a displayed atom cut is non-sequential, then its class
is an incident boundary class.
\end{lemma}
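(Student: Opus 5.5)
The approach is to reduce the claim to \cref{prop:strong-normal}(iv). We are in the flower-free case, and every boundary block has been compressed into an atom of \(D\). Let \(\mathcal X\mid\mathcal A\setminus\mathcal X\) be a displayed atom cut, and let \(X=\bigcup_{A\in\mathcal X}A\) be its ambient lift. By construction \(\mu(\mathcal X)=\rho_G(X)\), and \(X\) is a union of atoms. Every canonical boundary core \(F_i\) is therefore wholly contained in \(X\) or wholly contained in \(E\setminus X\). This is the admissibility observation recorded just before the statement.

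The first step checks that this admissibility is exactly canonical-block-saturation in the sense of \cref{prop:strong-normal}(iv). The incident boundary cores of the component are the outward-oriented cores \(F_1,\ldots,F_m\) of \cref{lem:canonical-component-blocks}. These are precisely the blocks that were compressed, so each incident boundary core lies on one side of the lifted cut. One subtlety is that (iv) is phrased with the cores attached to the class-normal partial tree through \cref{prop:strong-normal}(i). I would identify them with the \(F_i\) using the canonical isomorphism of (i) and the fact that each incident strong class has a unique outward core. Once this identification is made, the hypothesis of (iv) is met.

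The second step invokes \cref{prop:strong-normal}(iv). The component contains no flower, by the case assumption preceding \cref{lem:atom-compression}. The lift \(X\mid E\setminus X\) is an exact \(2\)-separation, because \(\rho_G(X)=\mu(\mathcal X)\leq2\) and \eqref{eq:prime-connectivity} forces equality once both sides have at least two elements. It is non-sequential by hypothesis. Item (iv) then says that its class is one of the incident boundary classes, which is the claim.

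The main obstacle is making sure that (iv) is applied with the correct notion of ``incident core''. The components in \cref{prop:strong-normal} live in the partial tree, while the \(F_i\) here come from the canonical core tree, and the two are matched only at class level. To handle this, I would re-run the argument of (iv) directly with the \(F_i\). Choose a displayed representative of the class by \cref{thm:CW}. If that representative lies outside the component, orient it toward the boundary, take the first incident core \(F_i\) on the way, and use saturation \(F_i\subseteq X_0\) together with the monotonicity chain \(\fcl(A)\subseteq\fcl(F_i)\subseteq\fcl(X_0)=\fcl(A)\). This chain only involves closures and canonical cores. If the representative lies inside the component, this is excluded because the component has no flower and no undeleted non-sequential edge, since every non-sequential tree edge is strong and was deleted.
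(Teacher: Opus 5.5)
Your proposal is correct and follows the paper's own proof. The paper likewise reduces the lemma to \cref{prop:strong-normal}(iv) applied to the block-saturated lift, and then, for completeness, re-runs that argument: it takes the first boundary core \(F_i\), derives \(A\subseteq F_i\), uses saturation to get \(F_i\subseteq X\), and closes with the chain \(\fcl(A)\subseteq\fcl(F_i)\subseteq\fcl(X)=\fcl(A)\); displays inside the component are excluded because it has no flower and no undeleted non-sequential edge. You leave two steps implicit that the paper states explicitly: the derivation of \(A\subseteq F_i\) from \(E\setminus F_i=\fcl(E\setminus P_i)\subseteq\fcl(E\setminus R)\), and the exclusion of the opposite closure pairing, which would force \(\fcl(X)=E\).
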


\begin{proof}
This is \cref{prop:strong-normal}(iv).  For completeness, an equivalent
Clark--Whittle display outside the component lies beyond a first boundary
class.  Let \(R\) be its remote displayed side, let \(P_i\) be the
tree-displayed side of that boundary, and put
\(A=E\setminus\fcl(E\setminus R)\).  Since
\(E\setminus P_i\subseteq E\setminus R\) and the core of \(P_i\) is \(F_i\),
monotonicity gives
\[
 E\setminus F_i=\fcl(E\setminus P_i)
 \subseteq\fcl(E\setminus R),
\]
hence \(A\subseteq F_i\).  Orient the admissible side \(X\) to contain
\(A\); block-saturation then forces \(F_i\subseteq X\).  Equivalence cannot pair
\(\fcl(A)\) with \(\fcl(E\setminus X)\), since monotonicity would then force
\(\fcl(X)=E\).  Hence
\(\fcl(A)\subseteq\fcl(F_i)\subseteq\fcl(X)=\fcl(A)\), identifying the
boundary class.  There is no flower or undeleted non-sequential edge inside
the component.
\end{proof}

The next small tree argument is needed because a representative equivalent
to a boundary cut could a priori contain several block atoms.

\begin{lemma}[One-block boundary representative]\label{lem:one-block}
Let \(Q\mid\mathcal A\setminus Q\) be a cut displayed by \(D\), and suppose
its ambient lift \(X\) is equivalent to the boundary class of \(F_i\).  Orient
it with \(F_i\subseteq X\).  Then no other boundary block lies in \(X\).
\end{lemma}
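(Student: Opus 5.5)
We argue by contradiction: suppose some other block \(F_j\), \(j\neq i\), lies in \(X\). The plan is to show that \(F_j\) would then have to sit inside \(\fcl(F_i)\), and that this is incompatible with the facing orientations of the two incident boundary classes.

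The first step records what equivalence gives. Since \(X\) is oriented with \(F_i\subseteq X\) and is equivalent to the class of \(F_i\), the closure pairs match in the same orientation. The opposite pairing would put \(F_i\) inside \(\fcl(E\setminus X)\). Together with \(E\setminus X\subseteq\fcl(E\setminus X)\), this forces \(\fcl(X)=E\), exactly as in the proof of \cref{lem:admissible-location}. Hence
\[
 \fcl(X)=\fcl(F_i),\qquad \fcl(E\setminus X)=\fcl(E\setminus F_i)=E\setminus F_i ,
\]
where the last equality holds because \(E\setminus F_i\) is fully closed by the definition of a core. Consequently \(F_j\subseteq X\subseteq\fcl(X)=\fcl(F_i)\).

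The second step compares closure pairs, using the facing relation from \cref{lem:canonical-component-blocks}. The outward orientations of the \(j\)-th and \(i\)-th classes face each other: the outward pair of \(F_j\) is below the reverse of the outward pair of \(F_i\). In particular \(\fcl(F_j)\subseteq\fcl(E\setminus F_i)=E\setminus F_i\). Combined with the first step, \(F_j\subseteq\fcl(F_i)\cap(E\setminus F_i)\). So \(F_j\) lies in the fringe \(\fcl(F_i)\setminus F_i\), which belongs to \(K\).

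The third step, which I expect to be the main obstacle, turns this into a contradiction with non-sequentiality of the \(j\)-th class. From \(F_j\subseteq\fcl(F_i)\) and monotonicity, \(\fcl(F_i\cup F_j)=\fcl(F_i)\). The class of \(F_j\) is non-sequential, so \(\fcl(E\setminus F_j)\neq E\). But \(E\setminus F_j\supseteq F_i\), so \(\fcl(E\setminus F_j)\supseteq\fcl(F_i)\supseteq F_j\). Together with \(E\setminus F_j\) itself, this gives \(\fcl(E\setminus F_j)=E\), contradicting non-sequentiality. Thus no \(F_j\) with \(j\neq i\) fits inside \(X\).

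If this monotonicity argument has a gap (for instance, because \(X\) is only an admissible atom-cut lift rather than a core), I would fall back on the tree structure of \(D\). Consider the path in \(D\) from the \(F_i\)-leaf to the \(F_j\)-leaf, inside the displayed side \(Q\). The edge adjacent to the \(F_i\)-leaf displays \(F_i\) exactly. Nested displayed sides between \(F_i\) and \(X\) are equivalent to the \(F_i\) class by monotonicity, as in the duplicate-edge argument of \cref{prop:strong-normal}. Applying the first two steps to the smallest such side containing \(F_j\) reproduces the same closure contradiction.
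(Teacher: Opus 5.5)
Your proof is correct and is essentially the paper's argument: both rest on \(F_j\subseteq X\subseteq\fcl(X)=\fcl(F_i)\) together with \(F_i\subseteq E\setminus F_j\); the paper concludes directly that \(\fcl(F_i)\subseteq E\setminus F_j\) because \(E\setminus F_j\) is fully closed, while you reach the same contradiction via monotonicity and \(\fcl(E\setminus F_j)\neq E\). Your Step~2 and the fallback through \(D\) are unnecessary, since only disjointness of \(F_i\) and \(F_j\) is used. In Step~1 the justification is garbled: the opposite pairing is excluded because it would force \(\fcl(X)\supseteq X\cup\fcl(E\setminus F_i)\supseteq F_i\cup(E\setminus F_i)=E\).
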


\begin{proof}
The orientation gives \(\fcl(X)=\fcl(F_i)\); the opposite pairing would make
that closure contain both \(F_i\) and its complement.  Thus
\(X\subseteq\fcl(F_i)\).  For \(j\neq i\), disjointness gives
\(F_i\subseteq E\setminus F_j\), and \(E\setminus F_j\) is fully closed
because \(F_j\) is a canonical core.  Hence
\[
 \fcl(F_i)\subseteq E\setminus F_j.
\]
Consequently \(X\) cannot contain \(F_j\).
\end{proof}

\begin{lemma}[Displayed atom cuts are orientable]\label{lem:atom-orientable}
Every edge cut of \(D\) has a side whose full closure in the atom system is
all atoms.
\end{lemma}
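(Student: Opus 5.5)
Fix an edge of \(D\) with displayed atom sides \(Q\mid\mathcal A\setminus Q\), and let \(X\) and \(E\setminus X\) be their ambient lifts. Since \(D\) has width at most two, \(\rho_G(X)\leq2\). The plan is to find a side that is sequential in the ambient sense and then transfer its absorption sequence to the atom system, checking that the sequence respects block atoms.

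\emph{Easy cases.} If a side consists of a single atom, the complementary side absorbs it in one step, as in \cref{lem:sink}. So assume both sides have at least two atoms. Then both lifts have at least two vertices, and \eqref{eq:prime-connectivity} gives \(\rho_G(X)=2\).

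\emph{Non-sequential case.} Suppose the separation \(X\mid E\setminus X\) is non-sequential. By \cref{lem:admissible-location}, its class is an incident boundary class, say that of \(F_i\). Orient \(X\) so that \(F_i\subseteq X\). By \cref{lem:one-block}, \(X\) contains no other block atom. Moreover \(X\subseteq\fcl(F_i)\), and \(X\) strictly contains \(F_i\), because \(F_i\) is a single atom and \(Q\) has at least two atoms. I would then consider the opposite side \(E\setminus X\) and try to show that \(\fcl(E\setminus X)\) covers \(X\). This looks impossible, since equivalence to the boundary class pairs \(\fcl(E\setminus X)\) with \(\fcl(E\setminus F_i)=E\setminus F_i\), and that set does not contain \(F_i\).

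So the orientation argument must instead come from the structure of \(D\) at the compressed atom \(F_i\). By \cref{lem:nested-equivalent}, the fringe \(X\setminus F_i\) can be peeled off \(X\) one vertex at a time while keeping cut-rank two. Running this in reverse, the sequence starting from \(E\setminus X\) adds every fringe vertex and reaches \(E\setminus F_i\). The block atom \(F_i\) itself is then absorbed in one final step, because its complement is an atom set whose value does not increase when the single remaining atom is added. So I would verify that every intermediate set keeps \(F_i\) whole and has value at most two, and conclude that \(\fcl_\mu(\mathcal A\setminus Q)=\mathcal A\).

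\emph{Sequential case.} Suppose instead that \(\fcl(X)=E\) or \(\fcl(E\setminus X)=E\), say \(\fcl(Y)=E\). I must check that some absorption sequence from \(Y\) adds each block \(F_j\) as a whole, or at least that the partial unions keep blocks saturated when transferred to \(\mu\). For a block \(F_j\not\subseteq Y\), I would use that \(E\setminus F_j\) is fully closed and that \(F_j\) is an ns-side. The steps are as follows.
\begin{enumerate}
\item Use \cref{lem:persistence} to reorder the absorption sequence so that it first absorbs everything outside the blocks. The resulting set \(Y'\) contains no further block elements.
\item Show that \(Y'\) then has value at most two and that adding a whole block atom does not increase \(\mu\). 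For this I would apply submodularity to \(Y'\) and \(E\setminus F_j\).
\end{enumerate}

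\emph{Main obstacle.} I expect the hardest step to be this block-respecting transfer of an ambient sequential closure. An ambient absorption sequence may enter a block partway, and the intermediate sets would then not be atom unions.

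My first attempt would use the uncrossing inequality
\[
\lambda(Z\cup F_j)+\lambda(Z\cap F_j)\leq\lambda(Z)+\lambda(F_j).
\]
The idea is to show that, whenever a proper nonempty part of \(F_j\) has been absorbed, the set \(Z\cup F_j\) is still 2-separating. The whole atom can then be absorbed at that moment instead.
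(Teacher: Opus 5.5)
\textbf{The sequential case has a genuine gap.} Your non-sequential case is sound and is essentially the paper's argument. Reversing the sequence of \cref{lem:nested-equivalent} for \(F_i\subseteq X\) is the same as absorbing \(Y=X\setminus F_i\) from \(E\setminus X\) up to \(\fcl(E\setminus X)=E\setminus F_i\). Every intermediate set keeps the other blocks whole and misses \(F_i\), and the final step adding the atom \(F_i\) drops \(\mu\) from \(2\) to \(0\).

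The sequential case, however, you leave open as the ``main obstacle''. You even name the right ingredient, that \(E\setminus F_j\) is fully closed, but you do not draw the one-line consequence that removes the obstacle. Because \(Y\) is a union of atoms, \(F_j\not\subseteq Y\) means \(Y\cap F_j=\varnothing\), so \(Y\subseteq E\setminus F_j\). Monotonicity then gives
\(\fcl(Y)\subseteq\fcl(E\setminus F_j)=E\setminus F_j\neq E\).
Hence \(\fcl(Y)=E\) forces \(Y\) to contain every boundary block. An absorption sequence from \(Y\) therefore adds only vertices of \(K\), that is, interior singleton atoms. Every intermediate set is an atom union, and the sequence transfers step for step to \(\mu\). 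This is exactly how the paper handles this case.

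In particular, the situation you fear, an ambient absorption sequence entering a block partway, never occurs. If \(Y\) misses \(F_j\), the whole sequence stays inside \(E\setminus F_j\). If \(Y\supseteq F_j\), nothing of \(F_j\) remains to be added.

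\textbf{The workaround you sketch would not succeed as stated.} First, \cref{lem:persistence} lets you postpone an available absorption, not advance one. It therefore does not justify reordering the sequence so that all non-block elements are absorbed first, since an element of \(K\) might a priori become absorbable only after some block elements were added. Second, the uncrossing inequality \(\lambda(Z\cup F_j)+\lambda(Z\cap F_j)\leq\lambda(Z)+\lambda(F_j)\) is left unfinished, and it would not close the argument on its own. When \(|Z\cap F_j|=1\), connectivity gives \(\lambda(Z\cap F_j)=1\), so the inequality yields only \(\lambda(Z\cup F_j)\leq3\). Replacing this part of the proposal by the monotonicity argument above completes the proof.
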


\begin{proof}
Let \(X\) be the ambient lift.  If \(\fcl_G(X)=E\), then \(X\) contains every
boundary block.  Otherwise, for a missing \(F_i\), one would have
\(X\subseteq E\setminus F_i\) and hence
\(E=\fcl(X)\subseteq\fcl(E\setminus F_i)\neq E\).  An absorption sequence
from \(X\) consequently adds only interior singleton atoms and transfers
step-for-step to \(\mu\).  The same argument applies when the complementary
ambient side has full closure.

It remains to consider an ambient non-sequential cut.  By
\cref{lem:admissible-location}, it is a boundary class, say that of \(F_i\).
By \cref{lem:one-block}, orient it as
\[
 X=F_i\cup Y,\qquad Y\subseteq K,
\]
with every other boundary block in \(R=E\setminus X\).  Since \(F_i\) is a
canonical core, \(H_i=E\setminus F_i\) is fully closed.  Class equivalence
gives \(\fcl(R)=H_i\), so an absorption sequence from \(R\) to \(H_i\) adds
only the interior elements of \(Y\).  It transfers to the atom system while
all other block atoms remain fixed.  Finally add the single atom \(F_i\); the
connectivity changes from two to zero.  Thus the atom complement of \(Q\)
has full closure.

If a displayed cut has an underlying side of one original vertex, its
complement absorbs that singleton immediately.  These cases exhaust the
edges of \(D\).
\end{proof}

\begin{proposition}[Port-contiguous bag layout]\label{prop:bag-layout}
Every flower-free normalized torso has a linear layout of cut-rank at most
six in which every terminal triple is consecutive.
\end{proposition}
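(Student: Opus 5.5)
We split according to the outcome of \cref{lem:atom-compression}. If the compression process stops early, the torso has at most twelve vertices. Order it so that every terminal triple is consecutive. Every cut then has one side of at most six vertices, so its rank is at most six, and the bound holds trivially. The main case is that all boundary blocks have been compressed. Then we have the width-two atom branch decomposition \(D\) of the final atom system \((\mathcal A,\mu)\), in which each \(F_i\) is a single atom. We aim to apply \cref{lem:sink} to \(D\) and afterwards expand each block atom into its terminal triple.

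First we check the hypotheses of \cref{lem:sink}. Hypothesis (a) holds: an interior singleton atom has \(\mu\)-value at most one, and a block atom has \(\mu(\{F_i\})=\rho_G(F_i)=2\). Hypothesis (b) is exactly \cref{lem:atom-orientable}. The lemma therefore gives a linear ordering of \(\mathcal A\) of \(\mu\)-width at most four. By construction, this ordering is an ordering of the atoms in which each boundary block appears as one atom.

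Second, we pass to the torso \(H\). We replace each block atom \(F_i\) in the ordering by the three vertices of \(T_i\) placed consecutively, in any internal order. Consider a prefix that does not split any triple. It has the form \(Z=X\cup\bigcup_{i\in I}T_i\), and \cref{lem:block-identity} gives \(\rho_H(Z)=\rho_G(\widehat Z)=\mu(\text{atom prefix})\leq4\). Now consider a prefix that ends inside some \(T_i\). It differs from a block-saturated prefix by one or two vertices of \(T_i\). Adding or removing a single vertex changes cut-rank by at most one, so the rank of such a prefix is at most \(4+1=5\). A cruder bound also suffices: if we compare with the saturated prefix before \(T_i\) and use disjoint subadditivity with a set of at most two vertices, we get at most \(4+2=6\). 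Either way the width is at most six, and every terminal triple is consecutive by construction.

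The step we expect to be the main obstacle is confirming that \cref{lem:sink} really applies to the atom system, rather than to a genuinely 3-connected system. Its proof orients leaf edges away from leaves, and for a block-atom leaf this requires that the complement absorb \(F_i\) in one step. That holds because the complement has value two and \(\mathcal A\) has value zero. We also have to be careful with very small atom sets, where \(|\mathcal A|\leq2\) is handled inside the lemma. A further point is that \cref{lem:atom-orientable} was stated for every edge of \(D\), including leaf edges of block atoms; we will verify that its last paragraph covers those edges through the closure argument rather than the singleton case. Apart from these points the argument is a direct assembly of the preceding lemmas.
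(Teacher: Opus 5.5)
Your proposal is correct and follows essentially the same route as the paper: the at-most-twelve-vertex exceptional case is handled by the bound that a cut has rank at most the size of its smaller side, and otherwise \cref{lem:sink} is applied to \(D\) via \cref{lem:atom-orientable}, each block atom is expanded in place into its triple, \cref{lem:block-identity} bounds the saturated prefixes, and subadditivity bounds the prefixes inside a triple. Your sharper bound of five is also valid, because every prefix ending inside a triple is within one vertex of a saturated prefix (the one just before or just after the triple), and your leaf-edge concern is moot, since the complement of any single atom absorbs it in one step because \(\mu(\mathcal A)=0\).
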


\begin{proof}
In the nonexceptional case, apply \cref{lem:sink} to \(D\), using
\cref{lem:atom-orientable}.  This yields an atom ordering of width at most
four.  Replace every block atom \(F_i\) in place by the three members of
\(T_i\).  At a boundary between completed groups,
\cref{lem:block-identity} preserves the atom cut-rank, hence gives rank at
most four.  A prefix one or two vertices into the current triple has
cut-rank at most \(4+1\) or \(4+2\), respectively, by disjoint
subadditivity.  The resulting width is at most six.

In the exceptional case of \cref{lem:atom-compression}, the torso has at
most twelve vertices.  Put every triple consecutively in an arbitrary
ordering.  Every cut has rank at most the smaller side, hence at most six.
\end{proof}

\begin{corollary}[Uniform local bound]\label{cor:local-width}
Every rooted local torso associated with the strong-core tree is an induced
subgraph of a canonical component torso and has a port-contiguous layout of linear
rank-width at most six.
\end{corollary}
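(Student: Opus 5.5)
The plan is to combine the two component-level layout results with the induced-subgraph observation made after \cref{con:torso}. Fix a node of the canonical strong-core tree from \cref{cor:strong-tree}, and first dispose of the small orders. If \(|E|\leq8\), the whole prime graph is a single bounded local piece. Its rooted local torso then has at most eight vertices, plus at most one terminal triple per removed block. Here I would check that the resulting graph stays small enough for the trivial bound \(\min\{|X|,|V\setminus X|\}\leq6\), with triples placed consecutively. If this count is not immediate, the fallback is to note that for \(|E|\leq8\) there is no strong class other than possibly a bounded number, and to bound the vertex count directly.

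For \(|E|\geq9\), \cref{prop:strong-normal}(i) identifies the node with a component of the class-normal partial tree. By \cref{lem:canonical-component-blocks} that component is either flower-free or contains exactly one flower. We then form its canonical component torso \(H\) via \cref{con:torso}, using the fixed ordered bases \(R_i\) for all incident boundary cores, including the parent core taken in the opposite orientation.

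The two cases are handled separately. If the component contains a flower, \cref{lem:flower-torso} gives a layout of \(H\) of width at most four with every triple consecutive. If it is flower-free, \cref{prop:bag-layout} gives width at most six with triples consecutive, and this covers the exceptional twelve-vertex case of \cref{lem:atom-compression} as well.

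To pass to the rooted local torso, I would use the observation following \cref{con:torso}. Because adjacencies \eqref{eq:core-terminal}--\eqref{eq:terminal-terminal} depend only on \(A_G\) and the selected bases, the rooted local graph is obtained from \(H\) by deleting the parent triple and the fringe vertices outside the rooted territory, so it is an induced subgraph of \(H\). Restricting the layout of \(H\) to the surviving vertices does not increase any prefix cut-rank, since cut-rank is monotone under deleting rows and columns. Each surviving terminal triple was an interval in the original layout and is deleted either entirely or not at all, so it stays consecutive. Hence port-contiguity is preserved and the width stays at most six.

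The step I expect to be the main obstacle is justifying that the rooted local torso really is an induced subgraph of the component torso with the same terminal adjacencies. This requires the bases chosen for a core to coincide when that core is seen from its parent node and from its child node, and the parent direction to use the opposite core consistently. I would make this precise by fixing one ordered basis per strong class, in the spirit of the coherent selectors. I would then verify that \eqref{eq:terminal-terminal} for a pair of blocks is computed from the same rows in both torsos, so the two adjacency matrices agree entry by entry on the common vertex set.
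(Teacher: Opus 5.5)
Your proposal is correct and follows the paper's proof. Both arguments apply \cref{lem:flower-torso} or \cref{prop:bag-layout} to the canonical component torso, then delete the parent triple and the outside fringe via the induced-subgraph observation after \cref{con:torso}. The basis issue you flag is lighter than you fear, and your separate treatment of \(|E|\leq8\) is harmless but unnecessary. The layout results hold for any choice of bases in \cref{con:torso}, so it suffices to build the component torso with the child bases \(R_v\) used in the rooted local torso; the parent block's basis is irrelevant because its triple is deleted. The small orders are handled in the paper by a finite disjunction.
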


\begin{proof}
Use \cref{lem:flower-torso} for a flower component and
\cref{prop:bag-layout} otherwise, then delete the parent interface and any
outside fringe vertices not belonging to the rooted territory.
\end{proof}

\section{Coherent frames and finite-state evaluation}

Let \(\mathcal D\) be the laminar tree produced in
\cref{cor:strong-tree}.  Its leaves are the vertices of \(G\).  If \(u\) is
an inner node, write \(S_u\) for the set of descendant leaves, let
\(\operatorname{Ch}(u)\) be its inner-node children, and let \(I_u\) be its
singleton children.  Thus
\[
 S_u=I_u\ \dot\cup\!\bigcup_{v\in\operatorname{Ch}(u)} S_v.
\tag{5.1}\label{eq:territory-partition}
\]
Every nonroot \(S_u\) is the canonical core of a strong exact
rank-two cut.

We first use two selector colours to make the ordered boundary frames
coherent.  The main logical danger is to choose an unrelated basis at the two
occurrences of one tree edge.  Orbit saturation does not fix this: a
nontrivial element of \(\operatorname{GL}(2,2)\) can change the graph obtained
after substitution.  The following lemma supplies literal shared names.

\begin{lemma}[Coherent frame selectors]\label{lem:coherent-frames}
Two unary colours \(C_1,C_2\) on the nodes of \(\mathcal D\) suffice to
select, for every nonroot inner node \(u\), an ordered row basis
\[
 R_u=(r_1(u),r_2(u))\subseteq S_u
\]
of \(A_G[S_u,E\setminus S_u]\), with the coherence property
\[
 r_j(u)\in S_v\text{ for an inner child }v\in\operatorname{Ch}(u)
 \quad\Longrightarrow\quad r_j(v)=r_j(u).
\tag{5.2}\label{eq:frame-coherence}
\]
The validity of the selection is first-order over the graph equipped with
the laminar-tree ancestor relation.
\end{lemma}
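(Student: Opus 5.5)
We construct the selection top-down along \(\mathcal D\) and then encode it with two colours. The key fact is a heredity property of row bases. Let \(v\in\operatorname{Ch}(u)\) and \(x\in S_v\). Since \(E\setminus S_u\subseteq E\setminus S_v\), the row of \(x\) in \(A_G[S_u,E\setminus S_u]\) is the restriction to fewer columns of its row in \(A_G[S_v,E\setminus S_v]\). So if \(r_j(u)\in S_v\), we would like to keep it as \(r_j(v)\). For this we need the rows of \(R_u\cap S_v\) to be independent in the child's cut matrix. This holds: a dependency there restricts to a dependency of the same rows in \(A_G[S_u,E\setminus S_u]\), contradicting independence of \(R_u\). (If both \(r_1(u),r_2(u)\) lie in \(S_v\), their child rows are independent, so they already form a basis of the rank-two child cut.) Any partial independent set extends to a basis by adding rows from \(S_v\).

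The construction proceeds in order of increasing depth. At each nonroot inner node \(v\) with parent \(u\), put \(r_j(v)=r_j(u)\) whenever \(r_j(u)\in S_v\), using the same slot index \(j\). Fill any free slot with some vertex of \(S_v\) whose child row completes a basis. At children of the root, or when the parent has no frame, choose any ordered basis. This gives \eqref{eq:frame-coherence} by construction.

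Encoding with two colours needs care, because a vertex may serve as \(r_1\) along one chain and as \(r_2\) along another. We colour nodes rather than vertices. Colour node \(u\) by \(C_j\) exactly when slot \(j\) is freshly chosen at \(u\), that is, not inherited from the parent. Put the fresh vertex itself in a canonical position: require it to be a leaf whose attachment determines it. The cleanest choice is to mark the leaf \(x=r_j(u)\) with \(C_j\), and define \(r_j(u)\) as the unique \(C_j\)-marked leaf \(x\in S_u\) such that no inner node strictly between \(u\) and \(x\) freshly uses slot \(j\).

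To make this unambiguous, I expect to mark pairs (node, leaf) using the tree edges. Concretely, colour by \(C_j\) the inner node \(w\) on the path from \(u\) to \(x\) that is the child of \(u\) containing \(x\), or \(x\) itself if \(x\in I_u\). Then descend by recursion in which each fresh choice is recorded by the unique colour-\(C_j\) chain. Formally: \(r_j(u)=x\) iff \(x\in S_u\) and every node on the path from \(u\) to \(x\), excluding \(u\), carries \(C_j\), while the children of \(u\) off this path do not. Coherence then shows that inherited slots reuse the same chain. Fresh choices at distinct nodes give chains descending into the same subtree only if one node is an ancestor of the other. In that case the lower choice is fresh only because the upper \(x\) left its territory, so the chains are disjoint.

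The main obstacle is this disjointness and uniqueness of chains, which lets one colour per slot suffice. I would verify it carefully by an induction on depth.

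Definability is then routine. Quantifying only over nodes and leaves with the ancestor relation, we state that each \(R_u\) is well defined by the chain rule, lies in \(S_u\), and forms a basis of the rank-two cut. The basis condition is first-order: each row of \(S_u\) equals one of the \(2^2\) combinations on \(E\setminus S_u\), and the two rows are distinct and nonzero. Coherence is also expressed by a first-order formula.
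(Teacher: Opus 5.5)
Your proposal is correct, and it is the paper's proof. The top-down inheritance argument is the same, and so is the final encoding: colour the child of \(u\) containing \(r_j(u)\), and read \(r_j(u)\) off the monochromatic \(C_j\)-path. You should delete the ``fresh-slot'' colouring stated first, since it assigns \(C_j\) to exactly the complementary inner nodes. The ``disjointness of chains'' you flag as the main obstacle is immediate: each node's \(C_j\)-status is dictated by its parent's frame, so every nonroot inner node has exactly one \(C_j\)-child. The paper takes this as its validity condition, which makes uniqueness and coherence automatic instead of separately imposed filters.
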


\begin{proof}
Require that, for each nonroot inner node \(u\) and each \(j\in\{1,2\}\),
exactly one child of \(u\) has colour \(C_j\).  A child may have both colours.
Define \(\operatorname{Rep}_j(u,x)\) to mean that \(x\) is a singleton
descendant of \(u\) and every node on the path strictly below \(u\) and down
to \(x\) has colour \(C_j\).  The unique-child requirement makes this
singleton unique; call it \(r_j(u)\).  Since the output vocabulary contains
the ancestor relation, the path condition is first-order.

Filter the colouring by requiring that the rows of \(r_1(u),r_2(u)\) on
\(E\setminus S_u\) are nonzero and distinct, and that every row indexed by
\(S_u\) is one of
\[
 0,\quad r_1(u),\quad r_2(u),\quad r_1(u)+r_2(u)
\]
on those columns.  These are finite first-order row-equality tests.  They say
exactly that \(R_u\) is an ordered basis.

It remains to prove that a valid colouring exists.  Work top-down, starting
independently at the nonroot children of the root.  Suppose a basis has been
chosen at \(u\), and consider an inner child \(v\).  If one selected element
of \(R_u\) lies in \(S_v\), its row remains nonzero when the column set is
enlarged from \(E\setminus S_u\) to \(E\setminus S_v\).  If both lie in
\(S_v\), their independence is preserved for the same reason.  Retain every
inherited element in its slot and, when only one is inherited, extend it to a
basis of the rank-two child cut; when none is inherited, choose any ordered
basis.  Colour the immediate child containing each chosen representative.
Continuing recursively produces the required monochromatic selector paths,
and \eqref{eq:frame-coherence} is automatic.
\end{proof}

For \(x\in S_u\), let \(\ell_u(x)\in\F_2^2\) be the unique coefficient vector
such that
\[
 A_G[x,E\setminus S_u]
 =\ell_u(x)_1A_G[r_1(u),E\setminus S_u]
 +\ell_u(x)_2A_G[r_2(u),E\setminus S_u].
\tag{5.3}\label{eq:port-label}
\]
Each of the four predicates \(\ell_u(x)=a\) is first-order over the decorated
tree.  We regard \(G[S_u]\), coloured by \(\ell_u\), as a graph with port
alphabet \(\Gamma=\F_2^2\).

Put \(e_1=(1,0)\) and \(e_2=(0,1)\).

\begin{definition}[Valid rank-two port graph]\label{def:valid-port}
A canonical \(\Gamma\)-labelled graph \(H\) is a \emph{valid rank-two port
graph} if there are a graph \(H^+\) containing \(H\) as an induced subgraph,
a set
\[
 O=V(H^+)\setminus V(H),
\]
and vertices \(b_1,b_2\in V(H)\) such that the rows of \(b_1,b_2\) form an
ordered basis of \(A_{H^+}[V(H),O]\), and, whenever \(x\in V(H)\) has label
\(a=(a_1,a_2)\), one has
\[
 A_{H^+}[x,O]
 =a_1A_{H^+}[b_1,O]+a_2A_{H^+}[b_2,O].
\]
\end{definition}

A canonical \(\Gamma\)-labelled graph is valid if and only if both label
classes \(e_1\) and \(e_2\) are nonempty.  For the forward implication,
independence forces the labels of \(b_1,b_2\) to be \(e_1,e_2\),
respectively.  Conversely, choose vertices \(b_j\) of label \(e_j\), add two
new vertices \(o_1,o_2\), and join a vertex of label \(a\) to \(o_j\) exactly
when \(a_j=1\).  The resulting outside cut has rank two and realizes the
prescribed port labels.

Coherent frames now permit a single finite port congruence for the local
computation.  Fix a VR-recognizable property \(\mathcal P\).  Choose once and for all a
finite label alphabet \(\Sigma\) containing disjoint namespaces for
\begin{enumerate}[label=\textup{(\alph*)}]
  \item a canonical copy \(\Gamma\) of \(\F_2^2\);
  \item a temporary child copy \(\Gamma_{\rm ch}\); and
  \item one dummy root output label \(\bot\), and the temporary outside
        labels \((c,s)\in(\Gamma\cup\{\bot\})\times\F_2^2\).
\end{enumerate}
Fix a locally finite congruence \(\equiv\) of the full VR algebra
recognizing \(\mathcal P\), and let \(\equiv_\Sigma\) be its restriction to
the \(\Sigma\)-labelled sort.  This restriction has finite index and
saturates the inverse image of \(\mathcal P\) under forgetting labels.
Restrict its classes to graphs labelled in the canonical namespace
\(\Gamma\), and call the resulting finite state set \(\widehat Q\).
Each state class is a globally VR-recognizable coloured language: it is
recognized by the same locally finite quotient after the fixed inclusion
of \(\Gamma\) into \(\Sigma\).  Compatibility with polynomial contexts
allows fixed outside operands of arbitrary clique-width.

Let \(Q\subseteq\widehat Q\) be the states realized by valid rank-two port
graphs.  Only these states are needed.  For every \(q\in Q\), fix a valid
four-labelled representative \(J_q\).  By \cref{def:valid-port}, the labels
\(e_1\) and \(e_2\) are nonempty in every \(J_q\).  Put
\[
 M=\max_{q\in Q}|V(J_q)|.
\tag{5.4}\label{eq:M}
\]
The representatives and all their internal adjacencies are fixed finite
data depending on \(\mathcal P\), not on the input graph.

This fixed congruence determines a uniform local expansion at every tree
node.  Suppose each inner child \(v\) of \(u\) has been assigned a state \(q_v\).
Define a labelled graph \(L_u\) as follows.  It is canonically
\(\Gamma\)-labelled when \(u\) is nonroot and dummy-labelled when \(u\) is
the root.  Its vertices are the owned vertices \(I_u\), together with one
private copy of \(J_{q_v}\) for every inner child \(v\).  Edges inside
\(I_u\) are inherited from \(G\), and each copy of \(J_{q_v}\) has its fixed
internal edges.

If \(z\in J_{q_v}\) has label \(a\in\F_2^2\) and \(x\in I_u\), put
\[
 A_{L_u}(z,x)=
 a_1A_G(r_1(v),x)+a_2A_G(r_2(v),x).
\tag{5.5}\label{eq:gadget-core}
\]
If \(z\in J_{q_v}\) and \(z'\in J_{q_w}\), with \(v\neq w\) and labels
\(a,b\), put
\[
 A_{L_u}(z,z')=
 a^{\mathsf T}
 \bigl[A_G(r_p(v),r_q(w))\bigr]_{p,q\in[2]}
 b.
\tag{5.6}\label{eq:gadget-gadget}
\]
For nonroot \(u\), give \(x\in I_u\) the output label \(\ell_u(x)\).  If
\(c_p=\ell_u(r_p(v))\), give a vertex of the \(v\)-gadget with child label
\(a\) the parent label
\[
 a_1c_1+a_2c_2.
\tag{5.7}\label{eq:label-transport}
\]
At the root, put every vertex into the single dummy namespace; the root
language ignores that label.

Every nonroot valid expansion is again a valid port graph.  Indeed, for
\(j\in\{1,2\}\), either \(r_j(u)\in I_u\), in which case it has output label
\(e_j\), or \(r_j(u)\in S_v\) for an inner child
\(v\in\operatorname{Ch}(u)\).  Coherence gives
\(r_j(v)=r_j(u)\); the representative \(J_{q_v}\) contains an \(e_j\)-labelled
vertex, and \eqref{eq:label-transport} sends that vertex to output label
\(e_j\).  Hence the state of \(L_u\) belongs to the chosen valid part of \(Q\).

\begin{lemma}[Uniform interpretation and width]\label{lem:local-expansion}
For a node parameter \(u\), unary state colours on its inner children, and
an assignment \(q_v\in Q\) to every such child, the graph \(L_u\) is
produced by one fixed finite-copy \(\CMSO\)-interpretation.  This includes
the dummy-labelled expansion at the root.  Every resulting expansion
satisfies
\[
 \lrw(L_u)\leq6+M,
\tag{5.8}\label{eq:expansion-width}
\]
and, with its fixed finite output-label alphabet, has linear clique-width
bounded by a constant \(b_{\mathcal P}\).
\end{lemma}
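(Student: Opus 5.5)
The proof has two independent parts: the uniform definability of \(L_u\), and the width bound (5.8).

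\emph{Interpretation.} Use \(M\) copies of the input vertex set, plus copies indexed by the finitely many \(q\in Q\). A child gadget for \(v\) with state \(q_v=q\) has vertex set \(J_q\), so it is encoded by copies \(1,\ldots,|V(J_q)|\) of one designated vertex of the child territory \(S_v\), say \(r_1(v)\), which is first-order definable from the frame selectors of \cref{lem:coherent-frames}. The owned vertices \(I_u\) use copy \(0\) of themselves, and membership in \(I_u\) is first-order from the ancestor relation of \(\mathcal D\). The domain formula says:
\begin{itemize}[label={}]
  \item copy \(0\) survives exactly at children of \(u\) that are leaves;
  \item copy \(k\geq1\) survives at \(x\) exactly when \(x=r_1(v)\) for an inner child \(v\) of \(u\) with colour \(q\) and \(k\leq|V(J_q)|\).
\end{itemize}
The gadget-internal edges and child labels are fixed finite tables indexed by \((q,k,k')\). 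Edges (5.5) and (5.6) are finite Boolean combinations of adjacency tests among \(x\), \(r_p(v)\) and \(r_q(w)\). The output labels \(\ell_u\) and the transport (5.7) are first-order by the remark after (5.3). The root case is one more disjunct with a first-order test for rootness. Everything is therefore first-order over the decorated tree and the graph, and hence CMSO after composition with \cref{cor:strong-tree}.

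\emph{Width.} Compare \(L_u\) with the rooted local torso of \cref{cor:local-width}. Form \(L_u'\) from \(L_u\) by replacing each gadget \(J_{q_v}\) with a terminal triple \(T_v\) carrying labels \(\Lambda\), with adjacencies given by (4.2) and (4.3). By construction, \(L_u'\) is the rooted local interface graph, and \cref{cor:local-width} gives it a port-contiguous layout of width at most six.

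Take this layout and substitute each triple \(T_v\) in place by all vertices of \(J_{q_v}\), listed in an arbitrary order. Every vertex \(z\) of the gadget has, towards the rest of \(L_u\), the row of the terminal of label \(a(z)\), or the zero row when \(a(z)=0\). This holds by (5.5)--(5.6) compared with (4.2)--(4.3).

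Now bound a prefix cut. Suppose it cuts no gadget. Each completed gadget contributes row vectors from the span of its triple, and the triple rows themselves occur in the gadget because the labels \(e_1,e_2\) are present in \(J_q\). So the rank equals the corresponding torso cut rank, which is at most \(6\). Internal gadget edges do not cross this cut, just as the triangle edges did in \cref{lem:block-identity}.

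Suppose instead the prefix lies inside one gadget. Moving the at most \(|V(J_q)|\le M\) gadget vertices already placed changes the rank by at most \(M\), by disjoint subadditivity applied to the completed-block cut. This gives \(\lrw(L_u)\le 6+M\).

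Finally, linear rank-width \(k\) with a fixed label alphabet gives linear clique-width bounded by a function of \(k\) and the alphabet size. The standard conversion realizes each cut's bounded-rank neighbourhood classes, refined by the finitely many labels, as linear clique-width labels. This yields \(b_{\mathcal P}\).

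The main obstacle is the identification of \(L_u'\) with the torso. It needs the facts recorded after \cref{con:torso}:
\begin{itemize}[label={}]
  \item the coherent frames give the same bases as the torso;
  \item the parent-side deletion and the fringe deletion produce an induced subgraph.
\end{itemize}
It also needs the zero-label gadget vertices, which behave as isolated-to-outside twins and so do not raise the rank. I would check these cases carefully, including the root and children that are leaves.
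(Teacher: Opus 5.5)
Your proposal is correct and follows essentially the paper's proof. You use a finite-copy interpretation with hard-coded gadget edges and first-order formulas for (5.5)--(5.7). For the width bound you substitute each terminal triple of the port-contiguous torso layout by $J_{q_v}$, keep whole-gadget cuts at their torso rank because every representative contains the labels $e_1,e_2$, use subadditivity for cuts inside a gadget, and then apply the standard linear rank-width to linear clique-width conversion refined by the label alphabet. The one difference is minor: you copy the ground vertex $r_1(v)$ rather than the child node $v$. This is legitimate because, for the fixed parameter $u$, the territories $S_v$ are pairwise disjoint, so $v\mapsto r_1(v)$ is injective.
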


\begin{proof}
Keep the singleton children of \(u\), and make at most \(M\) fixed copies of
every inner child node, retaining the number and fixed vertex labels
prescribed by its state \(q_v\).  Equations
\eqref{eq:gadget-core}--\eqref{eq:label-transport} are finite disjunctions of
first-order formulas using \(\operatorname{Rep}_j\), and internal gadget
edges are hard-coded.  This is a uniform finite-copy interpretation; it does
not copy a ground vertex once for every ancestor.

Start from the port-contiguous torso layout of
\cref{cor:local-width}.  Replace the interval occupied by each terminal
triple by the corresponding \(J_{q_v}\).  At a cut between whole gadgets,
group the cut matrix by the replaced blocks.  The external row of a gadget
vertex labelled \(a\) is the \(a\)-linear combination of the two old terminal
rows.  Conversely, those two rows occur because every representative contains
the labels \(e_1,e_2\).  Thus the grouped cut has exactly the old rank and in
particular rank at most six.  At a cut inside one gadget, moving the at most
\(M\) vertices of that gadget across the nearest grouped cut changes rank by
at most \(M\), by subadditivity.  This proves
\eqref{eq:expansion-width}.  The argument uses only the underlying adjacency
relation and therefore applies unchanged to the dummy-labelled root
expansion.  The standard inequality
\(\lcw(H)\leq 2^{\lrw(H)}+1\)~\cite{Oum17}, followed by refinement with the
fixed finite output-label alphabet, supplies \(b_{\mathcal P}\).
\end{proof}

\begin{lemma}[Port substitution]\label{lem:port-substitution}
If a child graph is replaced by the representative of its state, the state
of the whole framed parent graph does not change.
\end{lemma}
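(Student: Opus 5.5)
We express the framed parent graph as a one-hole polynomial context applied to the child graph and then invoke congruence compatibility. Fix a node \(u\) and an inner child \(v\). Let \(H_v=G[S_v]\), canonically \(\Gamma\)-labelled by \(\ell_v\), and let \(q_v\) be its \(\equiv_\Sigma\)-class. The framed parent graph is the graph on \(I_u\) together with the child blocks, with adjacencies given by \eqref{eq:gadget-core}--\eqref{eq:label-transport}; in it the block of \(v\) is either \(H_v\) itself or the gadget \(J_{q_v}\). The decisive observation is that in both cases every adjacency from a block vertex \(z\) of label \(a\) to anything outside the block, and its transported parent label, depends only on \(a\). The outside data are the two fixed rows of \(r_1(v),r_2(v)\) on the rest of the expansion. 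For \(H_v\) this holds by \eqref{eq:port-label}, because \(A_G[x,E\setminus S_v]\) is the \(\ell_v(x)\)-combination of the basis rows. For \(J_{q_v}\) it holds by the definitions \eqref{eq:gadget-core} and \eqref{eq:gadget-gadget}. Coherence \eqref{eq:frame-coherence} ensures that the same named rows \(r_p(v)\) appear in both descriptions, so the substitution is literal and not merely correct up to \(\operatorname{GL}(2,2)\).

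Next we build the context explicitly in the VR algebra over \(\Sigma\). Let \(O\) denote the fixed labelled outside graph, consisting of \(I_u\) and the other child blocks. Give each outside vertex \(y\) the temporary label \((c,s)\), where \(c\) is its output label (or \(\bot\) at the root) and \(s\in\F_2^2\) is its vector \((A(r_1(v),y),A(r_2(v),y))\); for a vertex of another gadget this vector is computed by \eqref{eq:gadget-gadget}.
\begin{enumerate}[label=\textup{(\arabic*)}]
  \item Rename the hole's labels \(a\in\Gamma\) to the child copy \(\Gamma_{\rm ch}\).
  \item Take the disjoint union with \(O\) as a fixed operand.
  \item For each pair \((a,(c,s))\) with \(a\cdot s=1\), apply the join between child label \(a\) and label \((c,s)\).
  \item Relabel child label \(a\) to \(a_1c_1+a_2c_2\) as in \eqref{eq:label-transport}, relabel \((c,s)\) to \(c\), and at the root relabel everything to \(\bot\).
\end{enumerate}
Since vectors \(a,s\) are fixed, the bilinear form \(a\cdot s\) is a finite case table. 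The context is a one-hole polynomial whose only nonfixed argument is the hole, and the paper explicitly allows fixed operands of arbitrary width. Plugging \(H_v\) into it yields the framed parent graph with the true child, and plugging \(J_{q_v}\) yields the one with the gadget. The one verification needed here is that the joins reproduce exactly the cross adjacencies: a join over \(\F_2\) realizes \(a\cdot s\), which is precisely \(a_1A(r_1,y)+a_2A(r_2,y)\).

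Because \(H_v\equiv_\Sigma J_{q_v}\) and \(\equiv\) is a congruence for all VR operations, the two context values are \(\equiv\)-equivalent. They therefore have the same state, and at the root they have the same membership in \(\mathcal P\), since the congruence saturates the inverse image under forgetting labels. Replacing several children simultaneously follows by iterating one child at a time, since each replacement leaves the other blocks' interfaces untouched.

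The main obstacle is the faithfulness check in the first paragraph. We must confirm that the outside vectors \(s\) of the other gadget vertices, and the transported labels, are identical before and after the substitution. This holds because both are computed only from the fixed rows of \(r_p(v)\) and never from internal vertices of \(H_v\). Beyond that, we must confirm that no gadget edge or cross edge is double counted inside the block; that is immediate because the block's internal edges come from the hole argument alone.
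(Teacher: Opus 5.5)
Your proposal is correct and is essentially the paper's proof: the same unary VR polynomial (hole labels renamed into \(\Gamma_{\rm ch}\), a fixed outside operand pre-labelled by the profiles \((c,s)\), joins exactly when \(a\cdot s=1\), then the transport relabelling \eqref{eq:label-transport}), applied one child at a time with the profiles of already replaced gadgets read off from \eqref{eq:gadget-gadget}. A minor remark: what makes the substitution literal is that the single selected frame \(R_v\) defines both \(\ell_v\) and the gadget adjacencies \eqref{eq:gadget-core}--\eqref{eq:gadget-gadget}, so the parent--child coherence \eqref{eq:frame-coherence} is not actually used in this lemma (the paper uses it for the validity of the expansions).
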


\begin{proof}
Consider one concrete replacement of a child \(v\).  Rename the canonical
labels in the hole from \(\Gamma\) to \(\Gamma_{\rm ch}\).  Regard the already
fixed outside graph as a second operand whose vertex \(x\) is labelled in
advance by
\[
 (c(x),s(x)),\qquad
 s(x)=\bigl(A(r_1(v),x),A(r_2(v),x)\bigr),
\]
where \(c(x)\) is its persistent output label.  This does not assert that a VR
relabeling can split a pre-existing colour class: for this particular context
the outside operand is simply supplied with these finite profile labels.  Join
\(a\in\Gamma_{\rm ch}\) to \((c,s)\) exactly when \(a\cdot s=1\), then relabel
\(a\) to its transported parent label from
\eqref{eq:label-transport} (or to \(\bot\) at the root) and \((c,s)\) back to
\(c\).  The result is a unary VR polynomial over the
\(\Sigma\)-labelled sort, with the outside graph as a fixed operand and
the child as its hole.

Consequently \(\equiv_\Sigma\) permits replacement of the child by
\(J_{q_v}\).  Replace the children one at a time.  Before each replacement,
recompute the two-bit profiles in the current concrete outside graph; for a
previously replaced gadget these are precisely the profiles prescribed by
\eqref{eq:gadget-gadget}.  Thus every step is a legitimate congruence context,
and after the last step the graph is exactly \(L_u\).
\end{proof}

We finally express the resulting local-state predicates in \(\CMSO\).  For
\(q\in Q\), the set of canonical \(\Gamma\)-labelled graphs of state \(q\)
is globally VR-recognizable by the chosen quotient.  The proof of
Theorem~3.5 of Boja\'nczyk--Grohe--Pilipczuk gives the following labelled
relative form, using their decomposer Theorem~3.3, acceptance Claim~3.6,
and Transfer and Color Enforcement Lemmas~5.4--5.5~\cite{BGP21}:
for every fixed finite label alphabet, every globally VR-recognizable language
\(\mathcal K\), and every fixed bound \(b\), there is a \(\CMSO\) sentence
\(\psi\) such that
\[
 H\models\psi\quad\Longleftrightarrow\quad H\in\mathcal K
 \qquad\text{whenever }\lcw(H)\leq b.
\]
Apply this statement with \(b=b_{\mathcal P}\), obtaining a sentence
\(\psi_q\) for every state \(q\).  Obtain similarly a sentence
\(\psi_{\mathrm{acc}}\) for the recognizable dummy-labelled root language
whose underlying unlabelled graph lies in \(\mathcal P\).

Backward-translate \(\psi_q\) through the interpretation of
\cref{lem:local-expansion}.  This gives a formula
\(\operatorname{Transition}_q(u)\) over the decorated ambient graph, with
the child-state colours as parameters.  It says that the local expansion at
\(u\) has state \(q\).  At the root use the backward translation of
\(\psi_{\mathrm{acc}}\).

Guess unary predicates \((P_q)_{q\in Q}\) partitioning the nonroot inner
nodes, and require
\[
 \forall u\in\operatorname{Inn}(\mathcal D)\setminus\{\mathrm{root}\}\ 
 \bigwedge_{q\in Q}
 \bigl(P_q(u)\rightarrow\operatorname{Transition}_q(u)\bigr),
\tag{5.9}\label{eq:run}
\]
together with the accepting root formula, where
\(\operatorname{Inn}(\mathcal D)\) denotes the inner nodes.

\begin{proposition}[Prime recognizable implies definable]
\label{prop:prime-definability}
For every VR-recognizable property \(\mathcal P\), there is a \(\CMSO\)
sentence which defines \(\mathcal P\) on split-prime graphs of rank-width at
most two.
\end{proposition}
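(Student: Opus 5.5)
The sentence will be a finite disjunction: one case for bounded split-prime graphs, and one for graphs with \(|E|\geq9\). In the bounded case (\(|V(G)|\leq8\), or more generally any fixed size bound), there are only finitely many graphs up to isomorphism. Each is defined up to isomorphism by a first-order sentence, so \(\mathcal P\) restricted to them is definable by listing the members. The split-prime condition itself is \(\CMSO\)-definable, because cut-rank at most one is first-order with a set parameter. Rank-width at most two need not be defined, since the statement only concerns inputs already in the class.

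For \(|E|\geq9\), the sentence will existentially guess the following data:
\begin{enumerate}[label=\textup{(\alph*)}]
  \item the output of the nondeterministic transduction of \cref{cor:strong-tree}, namely the anchor set \(R\), or the option ``no strong class'';
  \item the two frame selectors \(C_1,C_2\) of \cref{lem:coherent-frames}, filtered by their first-order validity test;
  \item the state colours \((P_q)_{q\in Q}\).
\end{enumerate}
It then asserts the run condition \eqref{eq:run} together with the backward-translated acceptance formula at the root. Since the transduction output and its colours are interpreted inside \(G\) with finitely many copies, backward translation turns all of this into a single \(\CMSO\) sentence over \(G\). I will note that existential guessing is harmless. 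Every guessed anchor produces some laminar tree of canonical cores, and \cref{cor:local-width} and \cref{lem:local-expansion} apply to any such tree, so the argument never depends on which anchor is chosen.

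Correctness will come from an induction along \(\mathcal D\) from the leaves upward. The claim is that in any run satisfying \eqref{eq:run}, the colour \(P_q(u)\) holds exactly when the port graph \(G[S_u]\), labelled by \(\ell_u\), has state \(q\).
\begin{itemize}
  \item For a nonroot \(u\), the induction hypothesis gives the correct states \(q_v\) at its children.
  \item Repeated port substitution (\cref{lem:port-substitution}) then shows that \(G[S_u]\) with labels \(\ell_u\) is \(\equiv_\Sigma\)-equivalent to \(L_u\). The concrete outside contexts exist because each \(S_v\) is a rank-two core with basis \(R_v\), and frame coherence \eqref{eq:frame-coherence} makes the transported labels \eqref{eq:label-transport} agree with \(\ell_u\).
  \item \cref{lem:local-expansion} bounds the linear clique-width of \(L_u\) by \(b_{\mathcal P}\). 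Hence \(\operatorname{Transition}_q(u)\) holds exactly when \(L_u\) has state \(q\), which is exactly when \(G[S_u]\) does.
\end{itemize}
Uniqueness of states makes the run deterministic, so some accepting guess exists if and only if the root expansion \(L_{\mathrm{root}}\), which is equivalent to dummy-labelled \(G\), lies in \(\mathcal P\). The root expansion is handled in the same way. Forgetting labels is saturated by \(\equiv\), so acceptance reduces to \(G\in\mathcal P\).

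The main obstacle is to confirm that the state classes and the root language really are globally VR-recognizable, labelled languages of the kind the relative Boja\'nczyk--Grohe--Pilipczuk statement needs. It also needs checking that the interpreted \(L_u\) lands in the bounded-width regime for every guessed child-state assignment, including invalid guesses. I would deal with invalid guesses by restricting to \(Q\) and observing that \eqref{eq:expansion-width} uses only the representatives \(J_q\), so it holds for arbitrary assignments from \(Q\). The other delicate point is the case with no strong class, where \(\mathcal D\) has only the root. There the root torso is the whole graph, and \cref{lem:narrow-flower} supplies the width bound (width \(\leq4\)).
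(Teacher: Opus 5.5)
Your proposal is correct and follows essentially the same route as the paper. You guess the anchor, the two frame selectors and the state colours, impose \eqref{eq:run} and the root formula, and prove correctness by upward induction via \cref{lem:port-substitution} and the bounded-width expansions of \cref{lem:local-expansion}, with small graphs handled by a finite disjunction. Two minor remarks: the agreement of the transported labels \eqref{eq:label-transport} with \(\ell_u\) follows from linearity after restricting columns to \(E\setminus S_u\), not from coherence \eqref{eq:frame-coherence}; and your use of \cref{lem:narrow-flower} for the root-only case is a valid and more direct substitute for obtaining that case through \cref{cor:local-width}.
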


\begin{proof}
Suppose first that the state colouring satisfies \eqref{eq:run}.  Induct
upwards in the laminar tree.  By the induction hypothesis, each child colour
is the state of its actual four-labelled territory.  Replace the child
territories one at a time by their representatives.  By
\cref{lem:port-substitution}, the state does not change, and the resulting
graph is exactly \(L_u\).  The transition formula therefore assigns to \(u\)
the state of its actual framed territory.  At the root, the accepting formula
holds exactly when \(G\in\mathcal P\).

Conversely, choose the actual state of every framed territory.  The coherent
frames exist by \cref{lem:coherent-frames}, every transition is correct by
\cref{lem:port-substitution}, and the root is accepting whenever
\(G\in\mathcal P\).  All guesses made by the construction are unary
colourings or the single anchor set, so composition and backward translation
produce one \(\CMSO\) sentence.  Prime graphs of bounded exceptional order
are handled by a finite disjunction.
\end{proof}

\section{From prime graphs to all graphs}

The proof of \cref{prop:prime-definability} is unchanged after adding any
fixed finite collection of unary vertex labels: cut-rank and closure ignore
the labels, the port alphabet is replaced by a fixed finite product, and the
same finite-state argument applies.  We use this labelled version for marker
vertices in a split decomposition.

Campbell--Guillon--Kant\'e--Kim--K\"ohler give a nondeterministic
\(\CMSO\)-transduction of the enriched canonical split decomposition
\cite[Theorem~5.9]{CampbellEtAl26}.  Their theorem is stated for strongly
connected directed graphs.  A connected undirected graph is interpreted as
the symmetric directed graph obtained by replacing every edge by its two
orientations; it is strongly connected, and the directed split relation then
specializes to the usual undirected split relation.  This first-order change
of vocabulary, followed by forgetting orientations in the factors, therefore
gives exactly the enriched canonical split decomposition used here.  Its
factor tree has prime factors and degenerate clique or star factors, and the
original graph is recovered by the fixed split-composition operation.

\begin{lemma}[Rank-width of split factors]\label{lem:split-factor-width}
Every prime factor in the canonical split decomposition of a graph \(G\) has
rank-width at most \(\rw(G)\).  The clique and star factors have rank-width at
most one.
\end{lemma}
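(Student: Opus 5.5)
The plan is to show that every factor of the canonical split decomposition is a vertex-minor of \(G\), indeed an induced subgraph of a graph locally equivalent to \(G\). Rank-width does not increase under vertex-minors, since local complementation preserves every cut-rank \(\rho\) and vertex deletion cannot increase branch-width of the cut-rank function. The clique and star factors are handled directly: complete graphs and stars are distance-hereditary, so every cut has rank at most one and \(\rw\leq1\).

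For a prime factor \(F\) with vertex set consisting of original vertices and marker vertices, I would realize \(F\) inside \(G\) as follows. Each marker vertex \(m\) of \(F\) corresponds to an edge of the split tree, hence to a split \((X_m,V(G)\setminus X_m)\) of \(G\) with \(X_m\) the side away from \(F\). By definition of split composition, adjacency between \(F\)-side vertices and \(X_m\) factors as a complete bipartite join between the frontier \(N(V\setminus X_m)\cap X_m\) and \(N(X_m)\setminus X_m\). Choose one frontier vertex \(x_m\in X_m\) for each marker. The frontier vertices of distinct markers lie in disjoint sets \(X_m\), and adjacency between \(x_m\) and \(x_{m'}\), or between \(x_m\) and an original vertex \(y\) of \(F\), holds exactly when the corresponding marker edge \(mm'\) or \(my\) exists in \(F\); this is the standard fact that a frontier representative behaves like the marker. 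Hence \(F\cong G[\{\text{original vertices of }F\}\cup\{x_m\}]\), an induced subgraph of \(G\). No local complementation is needed, and \(\rw(F)\leq\rw(G)\) follows from the monotonicity of rank-width under induced subgraphs: restricting a branch decomposition and suppressing degree-two nodes only lowers cut-ranks, since a cut matrix of the subgraph is a submatrix.

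The main obstacle is the bookkeeping that a single frontier vertex per marker reproduces every adjacency of \(F\) simultaneously, including between two markers whose subtrees are both far away. I would argue inductively along the split tree: the frontier of \(X_m\) seen from \(F\) is obtained by following, at each internal factor on the path, vertices adjacent to the incoming marker. Nonemptiness at each step follows from connectivity of factors, each factor being connected with every marker having a neighbour. Adjacency of two chosen representatives is then the product of marker adjacencies along the tree path, which for a path through \(F\) of length two reduces to the single edge \(mm'\) of \(F\). Should the paper's canonical decomposition differ in convention, the fallback is the known result of Bouchet/Oum that split components are vertex-minors, with vertex-minor monotonicity of rank-width (Oum~\cite{Oum05}).
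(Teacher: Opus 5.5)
Your proposal is correct and follows the paper's proof: the paper also chooses, behind each marker of a prime factor \(B\), one vertex of the nonempty active class (your frontier) of the corresponding split side. It notes that split composition makes two such classes complete or anticomplete according to marker adjacency in \(B\), so the representatives induce a copy of \(B\), and then uses induced-subgraph monotonicity of rank-width and distance-heredity of cliques and stars. As you observe yourself, no local complementation is needed, so your opening vertex-minor framing and the Bouchet/Oum fallback are unnecessary.
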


\begin{proof}
For a prime factor \(B\), choose behind each marker one original vertex in
the nonempty active class of the corresponding split side.  Between two
marker branches, split composition makes the two active classes either
complete or anticomplete according to whether the markers are adjacent in
\(B\).  The subgraph induced by the chosen representatives is consequently
isomorphic to \(B\).  Induced-subgraph monotonicity gives
\(\rw(B)\leq\rw(G)\), consistently with the characterization by prime
induced subgraphs in Theorem~4.3 of
Hlin\v{e}n\'y--Oum--Seese--Gottlob~\cite{HOSG08}.
Cliques and stars are distance-hereditary and hence have
rank-width at most one~\cite{Oum05}.
\end{proof}

Campbell et al. prove a general decomposition-level lifting theorem when a
CMSO transduction of bounded-width rank decompositions is already available
on the prime factors~\cite[Theorem~6.1]{CampbellEtAl26}.  Our prime argument
instead supplies, separately for each recognizable property, CMSO formulas
for its finite port states; it does not transduce a rank decomposition of the
prime graph.  The following property-level lift is therefore stated and
proved explicitly.

\begin{proposition}[Split lift]\label{prop:split-lift}
Suppose that, for every fixed finite vertex-label alphabet, every
VR-recognizable language is \(\CMSO\)-definable on the class of labelled
split-prime graphs of rank-width at most two.  Then every VR-recognizable
property is \(\CMSO\)-definable on all graphs of rank-width at most two.
\end{proposition}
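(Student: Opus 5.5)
The plan mirrors the prime case: one level up, with the laminar strong-core tree replaced by the canonical split tree. First I reduce to connected graphs by a commutative-monoid argument. Fix a locally finite congruence \(\equiv\) recognizing \(\mathcal P\), and let \(N\) be the finite set of one-label classes. Disjoint union induces a commutative monoid structure on \(N\), and \(G\in\mathcal P\) depends only on the product of the classes of the components. In a finite commutative monoid, the product of a multiset of elements is determined by the count of each element truncated at a threshold and taken modulo a period. It therefore suffices to show that each class \(q\in N\), restricted to connected graphs of rank-width at most two, is \(\CMSO\)-definable by a formula \(\theta_q\). The global sentence then guesses, for each \(q\), the set of vertices whose component satisfies \(\theta_q\). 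It counts components by choosing one representative per component, using counting modulo the period and exact counting up to the threshold. Finally it takes the finite disjunction over the admissible count profiles. Graphs on at most four vertices, and the degenerate split cases, are handled by finite disjunction.

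For connected \(G\), I apply the nondeterministic transduction of \cite[Theorem~5.9]{CampbellEtAl26}, specialized to symmetric digraphs as explained above. This yields the factor tree, in which each node is a prime, clique or star factor whose vertices are marker vertices or original vertices. By \cref{lem:split-factor-width}, every prime factor is split-prime of rank-width at most two. I root the factor tree and perform a bottom-up finite-state evaluation, as in \cref{prop:prime-definability}. The state of a subtree hanging below a marker edge is the \(\equiv\)-class of the corresponding split side \(S\), viewed as a two-labelled graph. The labels are \emph{active} and \emph{inactive}, where the active set is the frontier of the split, i.e.\ the vertices of \(S\) having neighbours across it. This frontier is exactly the rank-one port data, so the label alphabet is finite. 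Call the resulting finite state set \(Q'\).

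The transition at a factor node \(B\) is handled as follows. Replace each child subtree by a fixed representative \(J_q\) of its state. The active vertices of \(J_q\) are made adjacent to precisely those vertices of \(B\), or of other gadgets, that are adjacent to the corresponding marker. The proof that this preserves the state of the parent side is the argument of \cref{lem:port-substitution}, with a one-bit profile in place of a two-bit one. Deciding the state of the resulting graph is done by the hypothesis. Colour each marker vertex of \(B\) by the state of its child together with its parent/child role, which gives a fixed finite label alphabet. Then the language
\[
 \{(B,\text{colours}) : \text{substituting the } J_q \text{ yields state } q'\}
\]
is VR-recognizable. This holds because substitution of fixed graphs at labelled vertices, joined according to adjacency, is a VR-polynomial-definable operation, so the inverse image of a recognizable set remains recognizable. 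By hypothesis this language is \(\CMSO\)-definable on labelled split-prime factors of rank-width at most two. For clique and star factors, which have rank-width at most one, I use the rank-width-one case \cite{CampbellEtAl26}, or alternatively a direct counting argument: the star/clique substitution depends only on the multiset of child states, handled again by threshold and modular counts.

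Finally, I guess unary state colours on the marker edges of the transduced tree and require that each factor's transition formula, backward-translated through the transduction, holds. I also require that the root satisfies the accepting condition. Correctness follows by induction upward in both directions, exactly as in \cref{prop:prime-definability}. I expect the main obstacle to be the recognizability of the factor-level transition languages. It must be checked that the substitution of representatives into a coloured factor is realized uniformly by VR operations. The difficulty is that adjacency between two gadgets depends on adjacency of markers in \(B\), not on labels alone. I would handle this by interpreting the expanded graph from \((B,\text{colours})\) via a finite-copy \(\CMSO\) interpretation. I would then invoke the backward translation together with the fact that \(\CMSO\)-definability is what we ultimately need. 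Concretely, I would apply the hypothesis not to a transported language but directly: the class of coloured \(B\) whose expansion lies in state \(q'\) is the inverse image of a recognizable set under a bounded-copy substitution. Such inverse images are recognizable by the Filtering/Substitution closure of \cite{CE12}.
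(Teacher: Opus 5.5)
Your proposal is correct and follows essentially the same route as the paper. You transduce the canonical split decomposition, compute \(\Delta\)-labelled port states bottom-up by substituting representatives \(J_q\) through the rank-one port operation, apply the hypothesis to the pulled-back factor languages with clique and star factors treated separately, and assemble connected components through the finite commutative monoid of disjoint-union states. Relativizing per-component sentences and counting for cliques and stars are harmless variants of the paper's parallel application and its appeal to Boja\'nczyk--Grohe--Pilipczuk.

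The one step you only cite is the recognizability of the factor transition languages, and this is exactly what the paper proves by hand. There, adjacency to the deleted parent marker is recorded by an extra immutable colour \(\epsilon_t\), enforced by a recognizable validity condition, and a term translation realizes the expansion. Global recognizability then comes from the tuple \(h(E_\tau(H))\) over all decoration maps \(\tau\). Your first idea, interpretation plus backward translation, could not replace this: the expanded graphs are not split-prime and need not have bounded linear clique-width.
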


\begin{proof}
We first treat connected graphs.  Fix the target language
\(\mathcal P\), and choose one locally finite congruence of the full
many-sorted VR algebra recognizing it.  Use its restrictions to the
finitely many working sorts below.  Every state class in these sorts is
globally VR-recognizable by the same quotient.  Put
\(\Delta=\{0,1\}\), and let \(Q_0\) be the
finite set of congruence states realized by unlabelled graphs.  Let
\(q_\bullet\) be the state of the one-vertex graph carrying label \(1\).
Define \(Q_1\) to consist of \(q_\bullet\) together with all congruence states
of \(\Delta\)-labelled graphs that occur as nontrivial split sides, where
label \(1\) denotes the nonempty active class of the side.  For every
\(q\in Q_1\setminus\{q_\bullet\}\), fix a \(\Delta\)-labelled representative
\(J_q\) having a \(1\)-labelled vertex, and take \(J_{q_\bullet}\) to be the
one-vertex graph labelled \(1\).  Finally, let \(A\subseteq Q_0\) be the set
of accepting states.

We use the following rank-one port operation.  If \(H\) is a
\(\Delta\)-labelled graph and \(D\) is any disjoint labelled graph with a
specified vertex set \(Z\), take their disjoint union, join every
\(1\)-labelled vertex of \(H\) to every vertex of \(Z\), and then apply fixed
relabelings.  This is a unary VR polynomial in \(H\), with the labelled
outside graph as a fixed operand.  Consequently
\(H\) may be replaced by \(J_q\) in this operation whenever \(H\) has state
\(q\).  The second operand \(D\) is arbitrary; in particular it need not have
bounded clique-width.

The finitely many connected inputs for which the canonical split
decomposition has no non-leaf factor are handled directly by a finite
disjunction.  We henceforth assume that a non-leaf factor exists.

In the enriched split decomposition, write \(x\sim_c y\) when \(x\) and
\(y\) lie in the same connected component of the graph formed by the
c-edges.  The relation \(\sim_c\) is \(\MSO\)-definable, since c-edge
connectivity is expressible by quantifying over vertex sets with no c-edge
crossing.  Its non-leaf equivalence classes are precisely the factor graphs,
while the remaining singleton classes correspond to the original-vertex
leaves.  Contracting the \(\sim_c\)-classes and retaining the t-edges gives
the factor tree of the canonical split decomposition.

For a connected graph having a non-leaf factor, guess one vertex
\(r_{\mathrm{root}}\) in such a factor and root the quotient tree at its
\(\sim_c\)-class.  For every nonroot inner factor class \(t\), let \(B_t\)
be its c-edge graph.  The unique t-edge from \(t\) toward the root has one
endpoint in \(B_t\); denote this endpoint by \(p_t\), the parent marker of
\(B_t\).  For each \(x\in V(B_t)\setminus\{p_t\}\), the unique t-edge
incident with \(x\) leads either to a child factor or to an original-vertex
leaf.  All these relations are \(\MSO\)-definable from \(\sim_c\), the
t-edge relation, and the chosen root class.  Give \(x\) the state \(q_x\) of
the corresponding branch, using \(q_\bullet\) when the t-edge leads to an
original-vertex leaf, and put
\[
 \epsilon_t(x)=A_{B_t}(x,p_t).
\]
Define the one-port expansion \(\mathsf E_t(B_t)\) as follows.  Delete
\(p_t\), replace every remaining marker \(x\) by a private copy of
\(J_{q_x}\), and, for distinct markers \(x,y\), join the \(1\)-parts of their
copies completely if and only if \(xy\in E(B_t)\).  A vertex of the
\(x\)-copy whose label in \(J_{q_x}\) is \(a\in\Delta\) receives output label
\[
 a\epsilon_t(x).
\tag{6.1}\label{eq:split-port-transport}
\]
At the root use the same expansion without a parent marker and forget the
output labels.

For \(q\in Q_1\), let \(\mathcal K_q\) be the language of finitely labelled
pointed factors for which \(\mathsf E_t(B_t)\) has state \(q\).  For
\(q\in Q_0\), define the root language \(\mathcal K_q^0\) analogously.  These
languages are VR-recognizable.  Here are the algebraic details.  Regard the
child-state predicates and the predicates \(\epsilon_t\) as immutable unary
colours, separate from the auxiliary labels of a VR term.  In a term for the
coloured factor, replace creation of a vertex of state \(q_x\) by the fixed
term for \(J_{q_x}\).  Its vertices carry product labels \((i,c,a)\),
where \(i\) is the current construction label, \(c\) records the immutable
child-state, parent-marker and \(\epsilon_t\) colours, and \(a\) is the
representative's activity bit.  A join between construction labels \(i,j\)
becomes all joins between \((i,c,1)\) and \((j,d,1)\), over the fixed
colour combinations \(c,d\).  Relabellings act only on \(i\).
The specially coloured parent marker is sent to the empty graph, and
\eqref{eq:split-port-transport} is the final relabelling
\((i,c,a)\mapsto a\epsilon(c)\), where \(\epsilon(c)\) is the bit
encoded by \(c\), with labels forgotten at the root.
This term translation evaluates exactly to \(\mathsf E_t(B_t)\).

To verify global recognizability, including relabellings of the input
colours, let \(C\) be the finite alphabet of decoration types and \(h\)
the homomorphism onto the chosen locally finite quotient.  For every
finite input alphabet \(L\) and every map \(\tau:L\to C\), let
\(E_\tau(H)\) be the expansion decorating a vertex of label \(i\) by
\(\tau(i)\), retaining the output product labels
\((i,\tau(i),a)\).  Record \(h(E_\tau(H))\) for every such map \(\tau\).
This finite tuple has only finitely many values on the \(L\)-sort.
Disjoint union and joins act on its entries by the translated operations
above.  Under a relabelling \(r:L\to L'\), the entry for
\(\tau:L'\to C\) is obtained from the entry for \(\tau\circ r\) by
relabelling the first output coordinate.  Equality of the tuples is
therefore a locally finite congruence of the full VR algebra.  In the
decoration sort \(C\), the entry \(\tau=\mathrm{id}_C\), followed by
\((i,c,a)\mapsto a\epsilon(c)\), determines the state of the expansion.
Hence every \(\mathcal K_q\) is globally VR-recognizable.  Forgetting the
output labels gives the same conclusion for the root languages.
Intersect with the recognizable validity condition that the state
colours partition \(V(B_t)\setminus\{p_t\}\), the parent marker is unique,
and \(\epsilon_t(x)\leftrightarrow xp_t\in E(B_t)\).  At the root there is
no parent marker.

If \(B_t\) is prime, its underlying graph has rank-width at most two by
\cref{lem:split-factor-width}.  Hence
the labelled version of \cref{prop:prime-definability} supplies a \(\CMSO\)
formula for each \(\mathcal K_q\).  If \(B_t\) is a clique or a star, its
linear clique-width remains bounded after adding the fixed finite collection
of unary colours, so the corresponding formulas follow from
Boja\'nczyk--Grohe--Pilipczuk.  We therefore have, for every possible state,
one local transition formula valid for every factor type.  These formulas may
be relativized to the c-edge components of the enriched split decomposition.

Transduce the enriched split decomposition and form the rooted quotient tree
described above.  Guess unary predicates \((P_q)_{q\in Q_1}\), require each
\(P_q\) to be supported on the parent markers of nonroot inner factors, and
require that exactly one \(P_q\) holds at every such parent marker.  The
interpretation is that \(P_q(p_t)\) records the state of the entire territory
rooted at \(t\).

At a marker \(x\in V(B_t)\setminus\{p_t\}\), let \(y\) be its t-neighbour.
If \(y\) is the parent marker of a child factor, assign to \(x\) the unique
state \(q\) for which \(P_q(y)\) holds.  If \(y\) belongs to an
original-vertex leaf class, assign \(q_\bullet\) to \(x\).  Distinguish
\(p_t\), define \(\epsilon_t(x)\) by adjacency to \(p_t\), and, for every
\(q\in Q_1\), require that \(P_q(p_t)\) implies the formula defining
\(\mathcal K_q\), relativized to the \(\sim_c\)-class of \(p_t\).

At the root factor, use the root formulas \(\mathcal K_q^0\), each
relativized to the root \(\sim_c\)-class and supplied with the child-state
predicates derived from the t-neighbours of its markers as above.  For the
connected acceptance construction require
\[
 \bigvee_{q\in A}\mathcal K_q^0.
\]
For the component-evaluation construction, guess \(q\in Q_0\), require
\(\mathcal K_q^0\), and place a predicate \(R_q\) on the uniquely chosen
vertex \(r_{\mathrm{root}}\).  Thus every connected component receives
exactly one root representative carrying its unlabelled congruence state.
All these are fixed \(\CMSO\) conditions on the transduced structure and
hence backward-translate to the original graph.

Correctness is by induction up the rooted factor tree.  The actual territory
below a nonroot factor is \(\Delta\)-labelled by whether a vertex is active
across its parent split.  By induction, each child territory has the state
written on its parent marker, which is the t-neighbour of the corresponding
marker of \(B_t\).  The definition of split decomposition says
that two child territories are joined exactly by the complete bipartite graph
between their active classes when the corresponding markers are adjacent in
\(B_t\).  It also says that activity toward the parent is transported exactly
by \eqref{eq:split-port-transport}.  Replacing the child territories one at a
time by their representatives therefore preserves the state by the rank-one
port operation above, and the resulting graph is precisely
\(\mathsf E_t(B_t)\).  The local formula consequently assigns the actual
territory state.  At the root, the unlabelled expansion has the state of the
whole connected graph.  Conversely, assigning every territory its actual
state satisfies all local formulas.  Thus an accepting root state is
equivalent to membership in \(\mathcal P\).

Finally let \(G\) be disconnected.  The relation of belonging to the same
connected component of \(G\) is \(\MSO\)-definable.  Apply the connected
component-evaluation transduction in parallel to all nontrivial connected
components, using the Parallel Application Lemma
\cite[Lemma~6.2]{CampbellEtAl26}.  If a connected component is an isolated
vertex, use that vertex itself as its unique root representative and give it
the state \(q_\circ\in Q_0\) of the unlabelled one-vertex graph.  Any other
bounded connected case omitted by the factor-tree construction is handled
locally by a finite disjunction, after which one of its vertices is chosen as
its unique root representative.  We therefore obtain unary predicates
\((R_q)_{q\in Q_0}\) whose union contains exactly one vertex from each
connected component, with \(R_q\) containing precisely the roots of
components of state \(q\).

Disjoint union induces a well-defined finite commutative monoid
\((Q_0,\oplus)\), where
\[
 [H]\oplus[K]=[H\mathbin{\dot\cup}K].
\]
The state of \(G\) is the product of the states carried by its component
roots.  For each \(q\in Q_0\), choose integers \(N_q\geq0\) and \(p_q\geq1\)
such that
\[
 q^{n+p_q}=q^n\qquad\text{for every }n\geq N_q.
\]
Consequently the product is determined, for every \(q\), either by the exact
value of \(|R_q|\) when \(|R_q|<N_q\), or by the residue of \(|R_q|\) modulo
\(p_q\) together with the condition \(|R_q|\geq N_q\).  Exact bounded
cardinalities, lower bounds by fixed constants, and modular cardinalities are
expressible in \(\CMSO\).  A finite disjunction over the resulting count
profiles whose monoid product lies in \(A\) completes the construction.
\end{proof}

\begin{proof}[Proof of \cref{thm:main}]
Let \(\mathcal L=\mathcal P\cap\mathcal C_2\), where \(\mathcal P\) is
globally VR-recognizable.  By the labelled version of
\cref{prop:prime-definability}, every VR-recognizable language over a fixed
finite vertex-label alphabet is \(\CMSO\)-definable on prime graphs of
rank-width at most two.  Apply \cref{prop:split-lift}.  Thus \(\mathcal P\) is
\(\CMSO\)-definable on \(\mathcal C_2\), and hence so is \(\mathcal L\).
Conversely, if a \(\CMSO\) sentence \(\varphi\) defines \(\mathcal L\) on
\(\mathcal C_2\), Courcelle's theorem \cite{Courcelle90} makes its global
model class VR-recognizable, and its intersection with \(\mathcal C_2\) is
exactly \(\mathcal L\).
\end{proof}

\section{Conclusion}

Combining the preceding sections proves the recognizable-to-definable
direction of \cref{thm:main}.  The reverse direction is Courcelle's theorem,
and therefore the two notions coincide on graphs of rank-width at most two.
The point of the argument is not that such graphs have bounded linear
clique-width---they do not---but that their non-sequential rank-two cuts can
be organized into a definable laminar skeleton whose individual port torsos
have a uniform linear bound.  Recognizable properties can consequently be
evaluated locally and then composed through finite boundary states.

The proof also identifies the structural obstacle to extending the theorem
by the same route.  At rank two, compression reduces the flower-free pieces
to a 3-connected connectivity function of branch-width three, where the
Hall--Oxley--Semple--Whittle display theorem is available.  A higher-rank
extension would require an analogue strong enough to display successive
boundary blocks, or a different mechanism yielding bounded-width local
torsos.  The canonical-core and coherent-frame parts of the argument are
designed so that such a replacement could be inserted without changing the
finite-state evaluation.

\end{document}